\documentclass{amsart}

\usepackage[utf8]{inputenc}
\usepackage[english]{babel}

\usepackage{fullpage}

\usepackage{amsmath}
\usepackage{amsfonts}
\usepackage{amsthm}
\usepackage{amssymb}
\usepackage{theoremref}
\usepackage{colonequals}
\usepackage{graphicx}
\usepackage{cancel}
\usepackage{csquotes}
\usepackage{url}
\usepackage{subcaption}
\usepackage[inline]{enumitem}
\usepackage{nameref}

\newcounter{master}
\numberwithin{master}{section}

\theoremstyle{plain}

\newtheorem{theorem}[master]{Theorem}

\newtheorem{lemma}[master]{Lemma}

\newtheorem{conjecture}[master]{Conjecture}
\newtheorem{dummy}{\dummythmname}

\theoremstyle{definition}

\newtheorem{definition}[master]{Definition}
\newtheorem{claim}[master]{Claim}
\pretocmd{\endclaim}{\hfill$\blacktriangleleft$}{}{}

\theoremstyle{remark}

\newtheorem*{remark}{Remark}
\pretocmd{\endremark}{\hfill$\blacktriangleleft$}{}{}

\makeatletter
\let\c@equation\c@master

\let\c@figure\c@master

\let\c@table\c@master

\makeatother

\newenvironment{dualtheorem}[1]{\def\dummythmname{\thnameref{#1}}\def\thedummy{\ref{#1}*}\begin{dummy}}{\end{dummy}}

\makeatletter
\def\@cite#1#2{\textup{[\textbf{#1}\if@tempswa , #2\fi]}}
\def\@biblabel#1{[\textbf{#1}]}
\makeatother

\DeclareMathOperator{\diag}{diag}

\DeclareMathOperator{\Gr}{Gr}
\DeclareMathOperator{\Graff}{Graf{}f}
\DeclareMathOperator{\Int}{int}
\DeclareMathOperator{\PGA}{PGA}

\DeclareMathOperator{\SL}{SL}

\DeclareMathOperator{\sgn}{sgn}
\DeclareMathOperator{\Span}{span}

\makeatletter\def\tr#1{{\reset@font[}#1{\reset@font]}}\makeatother

\usepackage{tikz}
\usetikzlibrary{calc, decorations.pathmorphing, decorations.pathreplacing}

\tikzset{
    every path/.style={
        thick,
    },
    every node/.style={
        draw=black,
        fill=blue,
        thick,
        circle,
        minimum size=6pt,
        inner sep=0pt,
    },
    tag/.style={
        draw=none,
        fill=none,
    },
}

\def\backmatter{\def\subsection##1{\def\thesubsubsection{\S\ref{##1}}\subsubsection{\nameref{##1}}}}

\allowdisplaybreaks

\begin{document}

\title{On flat shadow boundaries from point light sources and the characterization of ellipsoids}

\author[B. Zawalski]{Bartłomiej Zawalski}
\address{Kent State University, Kent, OH, USA}
\email{bzawalsk@kent.edu}
\address{Case Western Reserve University, Cleveland, OH, USA}
\email{bartlomiej.zawalski@case.edu}
\thanks{The author is supported in part by U.S. National Science Foundation Grants DMS-1900008 and DMS-2247771.}
\subjclass[2010]{Primary 53A07; Secondary 05D10, 15A69, 52A20, 53A15}
\keywords{convex body, shadow boundary, supporting cone, symmetry group, orthogonality graph}

\begin{abstract}
In his classical work, W.~Blaschke proved that a convex body whose shadow boundaries are flat for every direction of parallel illumination must be an ellipsoid. An extension recently proposed by I.~Gonzalez-Garc\'ia, J.~Jer\'onimo-Castro, E.~Morales-Amaya, and D.J.~Verdusco-Hern\'andez predicts that the same conclusion holds for illumination by point light sources located on a hypersurface enclosing the body. We confirm this conjecture for convex bodies with sufficiently smooth boundaries. We further develop a duality framework relating illumination by point light sources to classical symmetry properties of hyperplane sections, linking several known and conjectured characterizations of quadrics from these complementary viewpoints.
\end{abstract}

\maketitle

\tableofcontents

%%%%%%%%%%%%%%%%%%%%%%%%%%%%%%%%%%%%%%%%%%%%%%%%%%%%%%%%%%%%%%%%%%%%%%
\section{Introduction}\label{sec:01}
%%%%%%%%%%%%%%%%%%%%%%%%%%%%%%%%%%%%%%%%%%%%%%%%%%%%%%%%%%%%%%%%%%%%%%

A classical problem in convex geometry is to determine global properties of a convex body $K\subset\mathbb R^n$, $n\geq 3$, from partial geometric information, such as its projections, sections, or shadows. A different, yet closely related, question arises from illumination problems: \emph{How is the geometry of $ K$ reflected in the structure of its shadow boundaries generated by point light sources?}\\

In his classical work, W.~Blaschke \cite[Anhang.VII]{BlaschkeKK} studied shadow boundaries for general convex bodies under illumination by parallel light sources. In this setting, shadow boundaries arise as points of tangency of supporting cylinders. He showed that if the shadow boundary of a convex body $K$ is flat for every direction of parallel illumination, then $K$ is an ellipsoid. Note that parallel illumination may be regarded as illumination from point light sources at infinity, while supporting cylinders correspond to supporting cones with apexes at infinity. Since tangency and flatness are projective invariants, the classical problem of Blaschke is equivalent to the problem of illumination by point light sources lying in a hyperplane disjoint from $K$.\\

In this spirit, A.~Marchaud \cite{Marchaud1959} investigated illumination by point light sources lying in a hyperplane $H$ that may intersect the convex body. He showed in dimension $3$ that if $H$ is tangent to the body at a single point, then the body must be an ellipsoid. If, however, $H$ intersects the interior of $K$, then, besides ellipsoids, other possibilities arise. Namely, the boundary of $K$ may consist of two caps of convex quadrics lying on opposite sides of $H$ and tangent along it, with at least one cap belonging to an ellipsoid, while the other may degenerate to a cone.\\

A natural next step is to replace the hyperplane by an arbitrary hypersurface $S$ enclosing $K$. This case was studied by J.~Jer\'onimo-Castro, L.~Montejano, and E.~Morales-Amaya, who showed that if $S$ is a polyhedral hypersurface enclosing $K$ and every point light source on $S$ creates a flat shadow boundary on $K$, then $K$ must be an ellipsoid \cite[Theorem~3.2]{flatgrazesconvexbodies}. As a consequence, the same conclusion holds if flat shadow boundaries are created by all point light sources belonging to a thin open shell enclosing $K$ \cite[Corollary~3.3]{flatgrazesconvexbodies}. In particular, it suffices to assume that this condition holds for all point light sources in some open neighborhood of $K$ \cite[Corollary~3.4]{flatgrazesconvexbodies}, thereby recovering an earlier theorem of G.R.~Burton \cite[Theorem~1]{Burton1977}.\\

Motivated by these developments, the following conjecture has emerged as a central open problem:

\begin{conjecture}[{cf. \cite[Conjecture~1]{Gonzalez_Garcia2022}, \cite[Conjecture~1]{https://doi.org/10.1112/mtk.12176}, \cite[Conjecture~3.1]{flatgrazesconvexbodies}}]\thlabel{con:01}
Let $K\subset\mathbb R^n$, $n\geq 3$, be a convex body and let $S$ be a hypersurface, which is the image of an embedding of the sphere $\mathbb S^{n-1}$, such that $K$ is contained in the interior of $S$. If any point light source on $S$ creates a flat shadow boundary on $K$, then $K$ is an ellipsoid.
\end{conjecture}

The conjecture asserts that ellipsoids are the only convex bodies whose shadow boundaries are flat under illumination from an enclosing hypersurface. In this paper, we confirm \thref{con:01} for convex bodies with boundary of class $C^3$. In fact, our approach yields an even stronger conclusion: it is enough to assume the flatness condition for only finitely many point light sources on every supporting hyperplane. To formulate our main result, we introduce the following notion:

\begin{definition}
A set containing at least $d$ points in a $d$-dimensional affine space is in a \emph{general linear position with respect to $p$} if no $d$ of them lie in a hyperplane passing through $p$.
\end{definition}

\begin{theorem}\thlabel{thm:01}
Let $K\subset\mathbb R^n$, $n\geq 4$, be a convex body with boundary of class $C^3$. Suppose that for every point $p\in\partial K$ on the boundary, there are at least $L(n)$ point light sources on the tangent hyperplane $T_p\partial K$ in a general linear position with respect to $p$, that create flat shadow boundaries on $K$. Then $K$ is an ellipsoid.
\end{theorem}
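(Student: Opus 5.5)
The strategy is local-to-global: from the flat shadow boundaries I extract a pointwise algebraic condition on the second and third fundamental forms of $\partial K$ at each point $p$. I then show this condition forces the cubic form to vanish identically. Once that holds, the Maschke–Pick–Berwald theorem (vanishing Fubini–Pick cubic form, which needs $C^3$) gives that $\partial K$ is locally a quadric. Since $\partial K$ is compact and convex, it is then an ellipsoid.

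\textbf{Step 1 (local condition).} Fix $p\in\partial K$. Write $\partial K$ near $p$ as a graph $x_n=f(x')$ over $T_p\partial K$, with $f(0)=0$, $\nabla f(0)=0$, and positive definite Hessian $Q=D^2f(0)$. This is possible after an affine change, assuming positive curvature at $p$; the degenerate points need a separate density/continuity argument. A light source $q\in T_p\partial K$ lies on the tangent hyperplane, so $p$ is on its shadow boundary. The shadow boundary from $q$ is $\{x\in\partial K:\langle\nabla F(x),x-q\rangle=0\}$. Because this set is flat, it lies in a hyperplane $H_q$ through $p$. Expanding the tangency equation to second order at $p$ gives two things.
\begin{itemize}
\item At first order, the tangent space of the shadow boundary at $p$ is the $Q$-conjugate hyperplane to the direction $q-p$. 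This identifies $H_q$.
\item At second order, flatness requires the second-order term to be proportional to the linear one on that conjugate hyperplane.
\end{itemize}
Using $f=\tfrac12Q+\tfrac16C+\dots$ with cubic form $C$, the condition should read: the quadratic form $C(v,\cdot,\cdot)$, restricted to the $Q$-orthogonal complement $v^{\perp_Q}$ of $v=q-p$, is a multiple of $Q$ there. (The tangent-hyperplane geometry cancels the other terms.) Passing to the apolar part $C_0$ of $C$ with respect to $Q$, the condition becomes $C_0(v,\cdot,\cdot)|_{v^{\perp}}\equiv 0$ modulo $Q$-terms. Here $v$ ranges over the $L(n)$ directions, which are in general linear position with respect to $p$.

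\textbf{Step 2 (algebra: finitely many directions kill the cubic form).} This is the main obstacle. I have to show that a $Q$-traceless symmetric cubic form $C_0$ on $\mathbb R^{n-1}$ vanishes whenever it satisfies the condition above for $L(n)$ vectors in general position. Take $Q$ to be the identity. The condition says $C_0(v,w,w')=0$ for all $w,w'\perp v$, plus trace relations. So for each $v_i$ the only surviving components of $C_0$ involve $v_i$ at least twice.

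I would encode this with an orthogonality graph: vertices are the given directions, and edges join orthogonal pairs. Pairwise orthogonal triples $v_i,v_j,v_k$ give $C_0(v_i,v_j,v_k)=0$, and orthogonal pairs give $C_0(v_i,v_i,v_j)=0$ by symmetry of the constraint. A Ramsey-type argument, presumably the combinatorial input signalled by the 05D10 classification, should then show the following. Among sufficiently many points in general linear position, either the constraints span the whole space of traceless cubics, or a configuration forces $C_0=0$ by dimension counting in $\operatorname{Sym}^3$.

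I expect to need $n-1\ge 3$ so that $v^\perp$ has dimension at least $2$ and the constraint is genuinely restrictive. This explains the hypothesis $n\geq 4$. $L(n)$ would be whatever bound the counting produces. Concretely, I would first check that $n$ generic directions give linearly independent constraints. Then I would count: each direction imposes about $\dim\operatorname{Sym}^2(v^\perp)-1$ conditions, and I would compare this with $\dim$ of traceless cubics, using general position to guarantee independence.

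\textbf{Step 3 (globalize).} Conclude $C_0\equiv0$ at every point of positive curvature, so the Fubini–Pick form vanishes there, and $\partial K$ is locally an elliptic quadric near each such point. Then I need to handle potential flat points. Quadric pieces glued along an open dense set are a single quadric by analyticity/connectedness. Since $\partial K$ is a compact convex hypersurface with positive curvature on an open dense set, it must be globally one ellipsoid, so flat points cannot exist.
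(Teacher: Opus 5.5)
Your Step~1 condition is correct: with $Q=I$ it is exactly the paper's relation that $C(v,a,b)=\lambda\langle a,b\rangle$ for $a,b\perp v$. Ending with a Pick--Berwald-type theorem also matches the paper.

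The genuine gap is Step~2, which is the actual content of the theorem. It is not carried out, and the mechanism you sketch would not work.

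Two of your intermediate claims are false:
\begin{enumerate}[label=(\roman*)]
\item The condition does not give $C_0(v,w,w')=0$ for $w,w'\perp v$. It gives $C_0(v,w,w')=\lambda\langle w,w'\rangle$ with $\lambda=-C_0(\hat v,\hat v,v)/(n-2)$ by tracelessness, which is generally non-zero.
\item Hence for $v_i\perp v_j$ one gets $C_0(v_i,v_i,v_j)=\lambda_j\|v_i\|^2$, not $0$.
\end{enumerate}

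More seriously, counting conditions and appealing to general linear position for their independence cannot succeed. For $n=6$, say, traceless cubics on $\mathbb R^5$ form a $30$-dimensional space, and each source imposes $\binom{5}{2}-1=9$ conditions. Yet at generic points of the boundary of the unit ball of $\ell^p_6$, $p\neq 2$, there are $6$ sources (so $54$ conditions) on the tangent hyperplane in general linear position. That body is not an ellipsoid. Whether the constraints are independent depends on the orthogonality pattern of the directions, which general position does not control.

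The missing idea is to turn each source into a rank-two correction and then compare sources through the symmetry of $C$. Rename your directions $u_i=q_i-p$, since $v_i$ will now denote something else.
\begin{enumerate}[label=(\alph*)]
\item A quadratic form vanishing on $u_i^\perp$ is divisible by $\langle x,u_i\rangle$. So $C(u_i,a,b)=\lambda_i\langle a,b\rangle+\langle a,u_i\rangle\langle b,v_i\rangle+\langle a,v_i\rangle\langle b,u_i\rangle$ for some vector $v_i$.
\item The identity $C(u_i,u_j,\cdot)=C(u_j,u_i,\cdot)$ gives $\langle u_i,u_j\rangle(v_i-v_j)\in\Span\{u_i,u_j\}$. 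This is a vector constraint when $u_i\not\perp u_j$, but only two scalar equations when $u_i\perp u_j$. So orthogonality is the \emph{uninformative} case, the opposite of how your graph is set up.
\item At most $n-1$ non-zero vectors in $\mathbb R^{n-1}$ are pairwise orthogonal. Hence Ramsey's theorem with $L(n)=R(n+1,n)$ yields $n+1$ pairwise non-orthogonal sources.
\item For these, $n-1\geq 3$ and general position (so $u_1\notin\Span\{u_i,u_j\}$) give $v_i=\mu_iu_i+w$ with a common $w$.
\item Add a term of the form \eqref{eq:11} to normalize $\lambda_1=\dots=\lambda_{n-1}=0$. The relations $\langle u_j,2\mu_iu_i+w\rangle=\lambda_j$, applied to the two extra sources $u_n,u_{n+1}$, force $w=0$ and then all $\mu_i=0$, i.e.\ $C_0=0$.
\end{enumerate}

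Step~3 also needs repair, since nothing you have shown gives positive curvature on an open dense set. Instead, start at a point of $\partial K$ farthest from the origin, where the second fundamental form is positive definite. Let $U$ be the connected component of the positive-definite locus containing it, and show that $U$ lies on a single quadric. Then $C^2$-continuity at any point of $\partial U$ contradicts maximality, so $U=\partial K$. Finally, the classical Maschke--Pick--Berwald theorem is proved for smooth hypersurfaces. With only a $C^3$ boundary you need the $C^{2,1}$ version \thref{thm:08}.
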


\noindent Here $L(n)$ is an explicit constant depending only on the dimension. The above statement is still true in dimension $3$, but for that, we need to assume additional smoothness:

\begin{theorem}\thlabel{thm:02}
Let $K\subset\mathbb R^3$ be a convex body with boundary of class $C^5$. Suppose that for every point $p\in\partial K$ on the boundary, there are at least $L(3)$ point light sources on the tangent plane $T_p\partial K$ in a general linear position with respect to $p$, that create flat shadow boundaries on $K$. Then $K$ is an ellipsoid.
\end{theorem}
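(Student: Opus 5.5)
Fix a boundary point $p$ and a light source $q\in T_p\partial K$ producing a flat shadow boundary. By convexity the shadow boundary from $q$ passes through $p$, since the line $pq$ is tangent at $p$. Flatness means it equals $\partial K\cap H_q$ for some hyperplane $H_q\ni p$ containing the direction $q-p$. Along this section the tangent hyperplanes all pass through $q$. I will work locally: write $\partial K$ near $p$ as a graph $x_n=f(x')$ with $f(0)=0$, $\nabla f(0)=0$ and Hessian $A=D^2f(0)$ positive semidefinite. The tangency condition at a point $x$ is $f(x')+\langle\nabla f(x'),q'-x'\rangle=0$ (taking $q_n=0$). Its zero set must be the hyperplane section $\{x_n=\langle a_q,x'\rangle\}\cap\partial K$, so the function
\[
F_q(x')=f(x')-\langle\nabla f(x'),x'-q'\rangle
\]
vanishes on the hypersurface $\{f(x')=\langle a_q,x'\rangle\}$ in $T_p$-coordinates. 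I will expand both sides in Taylor series at $0$ up to third order, which $C^3$ permits. The aim is to derive, at each $p$, algebraic identities linking the cubic form $C=D^3f(0)$ to $A$ and $q'$.

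The second-order terms should give $a_q$: we get $F_q=\langle Aq',x'\rangle+O(|x'|^2)$, so $a_q$ is proportional to $Aq'$, and in fact $H_q$ is the polar hyperplane of $q$. The third-order terms give a relation of the form
\[
C(x',x',q')+\lambda(q')\,\langle Aq',x'\rangle\cdot(\text{linear in }x') = \mu\, \langle Aq',x'\rangle\, A(x',x'),
\]
restricted to the tangent directions of the section. This says that the quadratic form $C(\cdot,\cdot,q')$, restricted to $(Aq')^\perp$, is proportional to $A$ there. This is the key pointwise tensor condition. Since it holds for at least $L(n)$ values of $q'$ in general linear position with respect to $p$, I will argue linearity in $q'$ and use a dimension count. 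The spans of the constraints should force $C(x,x,y)=\operatorname{sym}(\alpha(y)A(x,x)+\dots)$, that is, the cubic form is \emph{apolar}, or equivalently the Fubini--Pick cubic form relative to the affine normal vanishes at $p$. Two things have to be established here. First, that $A$ is nondegenerate, so that $K$ is strictly convex with positive curvature; I would get this from the flat shadow condition, since a degenerate direction would produce a nonflat or singular section. Second, that $L(n)$ generic constraints suffice. The latter is a Ramsey or orthogonality-graph type argument on the vectors $Aq'$, which is presumably where the paper's combinatorics and the constant $L(n)$ enter.

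Once the cubic form vanishes identically on $\partial K$, the Maschke--Pick--Berwald theorem says that a nondegenerate hypersurface with vanishing Fubini--Pick form is a quadric. It applies in dimension $n\ge 3$ for $C^3$-type regularity of the argument in dimension $n\geq4$, and a compact convex quadric is an ellipsoid. In dimension $3$ the tensor argument on the $2$-dimensional tangent plane is weaker: the restriction to $(Aq')^\perp$ is one-dimensional, so proportionality is automatic. One must then go to fourth- and fifth-order Taylor terms to extract the vanishing of the cubic form, which explains the $C^5$ hypothesis in \thref{thm:02}.

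The main obstacle is the algebraic step. It has to show that finitely many light sources in general linear position force apolarity of $C$ relative to $A$, uniformly and with a dimension-only bound $L(n)$. The difficulty is that each source yields only a restricted proportionality on a hyperplane. I would first handle the case $A=I$ after an affine normalization, and then treat the pencil of conditions as a linear system in the coefficients of $C$ to compute its rank.
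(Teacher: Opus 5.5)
\textbf{Main gap: there is no argument for $n=3$.} The three-dimensional case is the whole content of this statement. You correctly note that for $n=3$ the third-order condition is vacuous: the restriction of $D^3f(0)[\boldsymbol u_i,\cdot,\cdot]$ to the line $\boldsymbol u_i^\perp$ is always proportional to the metric. You also correctly say that fourth- and fifth-order terms are needed. But you give no mechanism for extracting the vanishing of the cubic form from them.

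The mechanism you do suggest, a linear system in the coefficients of $C$ with a rank count, cannot work here. At orders $4$ and $5$ each light source brings its own unknown scalars: the tilt $m$ of the plane containing its shadow boundary and the reciprocal distance $r^{-1}$. These enter nonlinearly, as in the terms $(-m+r^{-1})(-4m+r^{-1})r^{-1}$ and $f^{(3,0)}(m^2-mr^{-1}+r^{-2})$ in \eqref{eq:21}.

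The missing idea is elimination followed by a Fourier-degree count:
\begin{enumerate}
\item Normalize $D^2f(0)=\mathbf I$ and put the source at $r\hat{\boldsymbol e}_1$. Orders $3,4,5$ then give three scalar equations \eqref{eq:19}--\eqref{eq:21} in the two unknowns $m,r^{-1}$.
\item Solve the first two (possible when $f^{(0,3)}-3f^{(2,1)}\neq 0$) and substitute into the third. After clearing denominators this gives one polynomial identity in the $5$-jet of $f$, which must hold in the direction of each source.
\item Compose $f$ with a rotation by angle $t$. The identity becomes a trigonometric polynomial of degree $15$ whose top coefficient is $(if^{(0,3)}+3f^{(1,2)}-3if^{(2,1)}-f^{(3,0)})^5/4096$. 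This is a fifth power of the coefficient of the third harmonic of the cubic.
\item Hence, unless the trace-free part of $D^3f(0)$ vanishes, at most $30$ directions can carry flat shadow boundaries. General linear position makes the source directions pairwise distinct, so $31$ sources force $D^3f(0)[\boldsymbol x,\boldsymbol x,\boldsymbol x]=\langle\boldsymbol x,\boldsymbol x\rangle\ell(\boldsymbol x)$ for a linear form $\ell$.
\item The shear \eqref{eq:11} removes this remaining term, so the cubic form vanishes.
\end{enumerate}
This is exactly where $C^5$ and the value of $L(3)$ come from. The Ramsey/orthogonality-graph machinery you allude to belongs to the $n\geq 4$ argument and gives nothing here, because it starts from the vacuous third-order condition.

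\textbf{Nondegeneracy and the global step.} You plan to deduce that $A$ is nondegenerate at every boundary point from flatness (``a degenerate direction would produce a nonflat or singular section''). This is unsupported, and your final step, that the cubic form vanishes identically on $\partial K$, depends on it. The paper never proves such a statement. Instead it:
\begin{itemize}
\item starts at a point of $\partial K$ farthest from the origin, where the second fundamental form is positive definite;
\item takes the maximal open set $U$ on which it stays positive definite;
\item shows the cubic form vanishes on $U$, so $U$ lies in a quadric $Q$ by \thref{thm:08}, which needs only $C^{2,1}$ and so avoids any smoothness issue with Maschke--Pick--Berwald;
\item rules out $\partial U\neq\emptyset$: at a boundary point of $U$ the second fundamental form agrees with that of $Q$, hence is positive definite there, contradicting maximality.
\end{itemize}

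\textbf{A smaller error.} $H_q$ cannot contain the direction $q-p$. Otherwise $q\in H_q$, and lines from $q$ through the section would enter $\Int K$. This also contradicts your own correct conclusion $a_q\parallel Aq'$, because $\langle Aq',q'\rangle>0$ for positive definite $A$.
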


Since the proofs for these cases are essentially different, we have decided to formulate them as separate theorems. The smoothness assumption, while inherent in our argument, seems to be superfluous, and we believe it can be relaxed, especially in \thref{thm:02}. The assumption that every hyperplane tangent to $K$ contains at least $L(n)$ point light sources is similar in spirit to the assumption of \cite[Theorem~2]{Bianchi1987}, but much weaker. Admittedly, we have been quite profligate in assuming that the set of light sources meets our stringent \enquote{generality} requirements. But otherwise we would surely sink into the mire of countless configurations and degenerate cases. Moreover, to eliminate some of them, we would need to assume additional smoothness, which is highly undesirable. However, in most applications, the light sources will be distributed on a convex hypersurface encompassing $K$ anyway. We have strong reasons to believe that the sharp constant should be $L(n)=n+1$, but we were unable to prove this in full generality.\\

\thref{thm:01} should also be viewed in the context of a local version of the classical theorem of Blaschke obtained by Jer\'onimo-Castro, Montejano, and Morales-Amaya \cite[Theorem~2.2]{flatgrazesconvexbodies}. They showed that if a convex body $K$ admits flat shadow boundaries for all light directions in an open subset of the Grassmannian, then these shadow boundaries coincide with those of a quadric. Remarkably, the proof of \thref{thm:01} reveals a different and, in fact, stronger type of locality. Namely, one can deduce the quadric structure of a single open patch of $\partial K$ from the flatness only of the portions of shadow boundaries that intersect that patch. This demonstrates that the geometry of $K$ is already determined by purely local information carried by finitely many shadow boundaries through each point on the boundary, rather than requiring flatness along entire shadow boundaries as in the earlier local results.

\begin{figure}
\includegraphics[width=.5\textwidth]{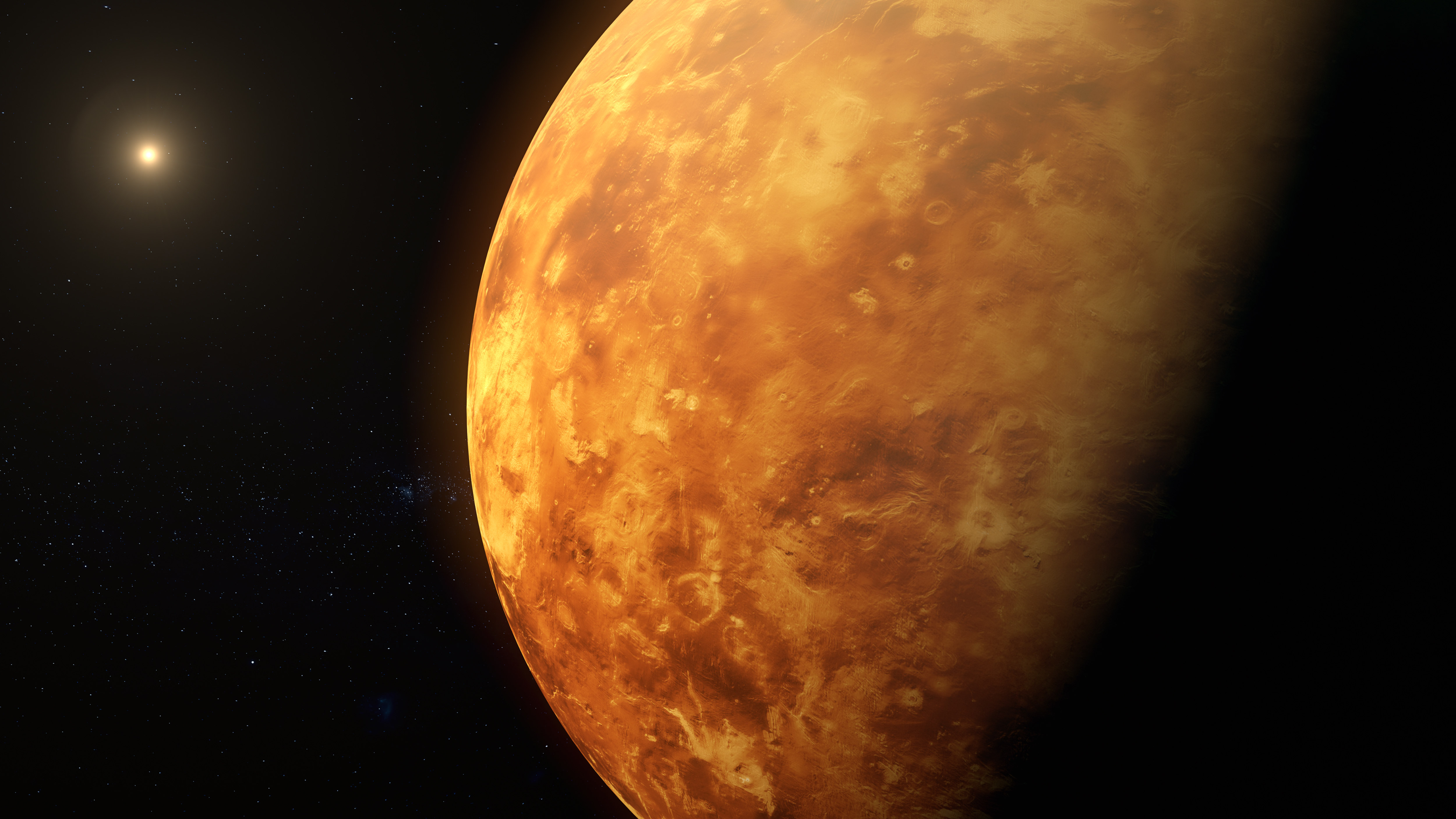}
\caption{The planet Venus illuminated by a point light source (the Sun), with the visible terminator representing the boundary between illuminated and shadowed regions. (Steven Molina/\protect\url{shutterstock.com})}
\label{fig:02}
\end{figure}

%%%%%%%%%%%%%%%%%%%%%%%%%%%%%%%%%%%%%%%%%%%%%%%%%%%%%%%%%%%%%%%%%%%%%%
\section{Duality between supporting cones and affine sections}
%%%%%%%%%%%%%%%%%%%%%%%%%%%%%%%%%%%%%%%%%%%%%%%%%%%%%%%%%%%%%%%%%%%%%%

The relation between supporting cones and affine sections is not only conceptual. Quite the contrary, for there is a very precise notion of duality between them, which allows us to transfer many theorems between these two worlds, and see them in a completely new light.\\

Denote by $\mathbb A^n$ the $n$-dimensional affine space. We complete $\mathbb A^n$ to $n$-dimensional projective space $\mathbb P(\mathbb A^n)$, by adding the \emph{hyperplane at infinity} and \emph{points at infinity} which lie on this hyperplane:
\begin{gather*}
\mathbb P(\mathbb A^n)\colonequals\mathbb A^n\sqcup\Gr_1(\mathbb R^n),\\
\Graff_{n-1}(\mathbb P(\mathbb A^n))\colonequals\{H\sqcup\Gr_1(\vec H):H\in\Graff_{n-1}(\mathbb A^n)\}\sqcup\{\Gr_1(\mathbb R^n)\}.
\end{gather*}
By choosing the origin $0\in\mathbb A^n$, we can identify the affine space $\mathbb A^n$ with its underlying linear space $\mathbb R^n$. For each point $p\in\mathbb A^n$ other than $0$, the projective hyperplane $H\colonequals\{x\in\mathbb A^n:\langle\boldsymbol x,\boldsymbol p\rangle=1\}\sqcup\Gr_1(\Span\{\boldsymbol p\}^\perp)$ is called the \emph{polar} of $p$, and the point $p$ is called the \emph{pole} of $H$. Further, we define the polar of $0$ to be the hyperplane at infinity, and the poles of the hyperplanes through $0$ to be the points at infinity, corresponding to their normal lines. The function $\varphi$ mapping points to their polars and hyperplanes to their poles preserves incidence, and hence is a \emph{correlation}.\\

Now, let $K\subset\mathbb R^n$, $n\geq 2$, be a convex body in $n$-dimensional affine space $\mathbb A^n$, containing the origin $0$ in its interior. We define the \emph{polar body} of $K$ by $K^\circ\colonequals\{x\in\mathbb A^n:\langle\boldsymbol x,\boldsymbol y\rangle\leq 1\text{ for every }y\in K\}$ (cf. \cite[\S 1.6.1]{Schneider_2013}). The polar body is again a convex body, containing the origin $0$ in its interior. Moreover, $K^{\circ\circ}=K$, so the correspondence is a true duality (cf. \cite[Theorem~1.6.1]{Schneider_2013}). In correlation $\varphi$, points on the boundary of $K$ correspond to hyperplanes supporting $K^\circ$. It follows that each section $\partial K\cap H$ of $\partial K$ by a hyperplane $H$, corresponds to the set of hyperplanes supporting $K^\circ$ and passing through $\varphi(H)\equalscolon H^\circ$, whose envelope is the supporting cone of $K^\circ$ with apex $H^\circ$. Note that supporting cones with apexes at infinity correspond to hyperplanar sections passing through the origin.\\

Further, if $A\in\PGA(\mathbb A^n)$ is a projective automorphism, then $(Ap)^\circ=A^\circ p^\circ$, where $\boldsymbol A^\circ=\boldsymbol A^{-\top}$, in terms of matrices. Hence, a different choice of the origin $0\in\mathbb A$ results in application of a certain projective automorphism $A\in\PGA(\mathbb A^n)$ to $K^\circ$, and the group of projective symmetries of $K$ is isomorphic to the group of projective symmetries of $K^\circ$. Moreover, the group of projective symmetries of any affine section $K\cap H$ is likewise isomorphic to the group of projective symmetries of the corresponding supporting cone of $K^\circ$ with apex $H^\circ$. The above observation establishes a duality between the characterizations of ellipsoids in terms of projective symmetries of their sections and in terms of projective symmetries of their supporting cones. However, note that both maps $A$ and $A^\circ$ are simultaneously affine if and only if they both preserve the origin. Indeed, a projective map is affine if and only if it preserves the hyperplane at infinity, whence its dual preserves the origin. Therefore, the groups of affine symmetries of $K$ and $K^\circ$ no longer need to be isomorphic, unless the origin is chosen at the Santal\'o point of $K$ \cite[(7.4.18)]{Schneider_2013}. Moreover, even the very shape of $K^\circ$ depends on the choice of the origin, whereas in the projective setting it does not. That is why projective symmetries seem to be more natural in this context.\\

Nevertheless, it is still worth considering affine symmetries. Observe that if a hyperplane $H$ does not pass through the origin, then every affine symmetry $A$ of $K\cap H$ can be uniquely extended to a linear automorphism $\boldsymbol A$ of the ambient space, which gives rise to a linear symmetry $\boldsymbol A^\circ$ of the corresponding supporting cone of $K^\circ$ with apex $H^\circ$. Moreover, since $A$ is conjugated to an orthogonal map, both $\boldsymbol A$ and $\boldsymbol A^\circ$ share the same characteristic polynomial. On the other hand, if a hyperplane $H$ passes through the origin, then every linear symmetry $\boldsymbol A$ of $K\cap H$ can likewise be extended to a linear automorphism $\boldsymbol A$ of the ambient space. Such extensions, which differ in the choice of the image of the normal vector to $H$, give rise to linear symmetries of the corresponding supporting cylinder of $K^\circ$ with axis $H^\circ$, which differ in a shear along the axis. The above observation establishes a duality between the characterizations of ellipsoids in terms of affine symmetries of their sections and in terms of linear symmetries of their supporting cones.\\

\textsl{Therefore, considering affine symmetries of supporting cones, assuming the existence of a common fixed point, is completely equivalent to considering affine symmetries of affine sections, while considering affine symmetries of supporting cones without this assumption is a significantly more difficult problem. However, considering projective symmetries of supporting cones is completely equivalent to considering projective symmetries of affine sections.}\\

In the rest of this section, we will discuss several dual pairs of such theorems. However, it should be emphasized that the map $\varphi$ is nothing but the classical projective correlation. The observation is therefore very simple, but nevertheless, it has powerful consequences, which we admittedly have not found in the literature. A different, more sophisticated duality map was introduced, e.g., in \cite{Montejano_Morales_2003}, where L. Montejano and E.~Morales-Amaya showed that several classical characterizations of ellipsoids depend upon a common concept of a projective center of symmetry.

\subsection{Flat shadow boundaries}

Let us start with the observation that \thref{thm:01}, which is the main result of this paper, is self-dual. Indeed, if for a point $p\in\mathbb A^n$ there exists a hyperplane $H\in\Graff_{n-1}(\mathbb A^n)$ such that the intersection of $H$ with any hyperplane tangent to $\partial K$ and passing through $p$ is tangent to $\partial K$, then applying $\varphi$ yields that for a hyperplane $p^\circ$ there exists a point $H^\circ$ such that the line through $H^\circ$ and any point on $\partial K^\circ$ contained in $p^\circ$ is tangent to $\partial K^\circ$, which is an equivalent property. The assumption that on every hyperplane tangent to $\partial K$ we have at least $L$ point light sources in general linear position corresponds to the assumption that every point on $\partial K^\circ$ belongs to at least $L$ flat shadow boundaries in general linear position.

\subsection{Centrally symmetric affine sections and axially symmetric supporting cones}

In \cite{Bianchi1987}, G.~Bianchi and P.M.~Gruber formulated two conjectural characterizations of ellipsoids in terms of sections and projections, which became the driving force behind many subsequent works.

\begin{conjecture}[{cf. \cite[Conjecture~1]{Bianchi1987}}]\thlabel{con:02}
Let $K\subset\mathbb R^n$, $n\geq 3$, be a strictly convex body and let $\mathcal S$ be a family of hyperplanes, which is the image of an embedding of the tangent bundle of the sphere $\mathbb S^{n-1}$, such that every hyperplane in $\mathcal S$ intersects the interior of $K$. If any such intersection is centrally symmetric and (with, possibly, a few exceptions) does not contain a possibly existing center of symmetry of $K$, then $K$ is an ellipsoid.
\end{conjecture}
%
% \begin{theorem}[{cf. \cite[Theorem~1]{Bianchi1987}}]\thlabel{thm:04}
% Let $K\subset\mathbb R^n$, $n\geq 3$, be a convex body and let $S$ be a closed convex hypersurface contained in the interior of $K$. If the section of $K$ by hyperplane $H$ is always an ellipsoid for any hyperplane $H$ tangent to $S$, then $K$ is an ellipsoid.
% \end{theorem}

\noindent As a first step towards verification, the authors proved \thref{con:02} under the stronger assumption that every section of $K$ by a hyperplane in $\mathcal S$ is an ellipsoid \cite[Theorem~1]{Bianchi1987}. Further, if $\mathcal S$ is a family of hyperplanes passing through a fixed point, then \thref{con:02} follows immediately from the result of D.G.~Larman:

\begin{theorem}[{False Center Theorem \cite[Theorem]{Larman}}]\thlabel{thm:04}
Let $K\subset\mathbb R^n$, $n\geq 3$, be a convex body, and let $p\in\mathbb R^n$ be any point of the ambient space. If all the intersections $K\cap H$ of $K$ with affine hyperplanes $H\in\Gr_{n-1}(\mathbb R^n)+p$ are centrally symmetric, then either $K$ is centrally symmetric with respect to the point $p$ or $K$ is an ellipsoid.
\end{theorem}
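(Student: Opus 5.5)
Since \thref{thm:04} is quoted from \cite{Larman}, we only outline how one would prove it. The plan is to reduce to $n=3$ and then combine a continuity argument for the centres of sections with a Blaschke-type characterization of ellipsoids.

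\emph{Reduction to $n=3$.} Every $3$-dimensional affine subspace $E$ through $p$ meeting $\Int K$ gives a body $K\cap E$ that again satisfies the hypothesis. Every plane of $E$ through $p$ is a hyperplane of $\mathbb R^n$ through $p$ intersected with $E$, so the sections of $K\cap E$ by planes through $p$ are sections of $K$ by codimension-two flats through $p$. These are centrally symmetric once all hyperplane sections through $p$ are, by applying the result inside each hyperplane section and inducting downward from dimension $n$. Suppose the theorem holds in dimension $3$. Then each $K\cap E$ is either symmetric about $p$ or an ellipsoid. A connectedness argument on $\Gr_3$ should show that a single alternative holds for all $E$. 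If all $K\cap E$ are symmetric about $p$, then every chord of $K$ through $p$ is bisected by $p$, so $K$ is symmetric about $p$. If all are ellipsoids, then every $3$-dimensional section of $K$ through $p$ is an ellipsoid, and the classical characterization (Busemann/Blaschke) gives that $K$ is an ellipsoid.

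\emph{The case $n=3$.} For a plane $H\ni p$, let $c(H)$ be the centre of $K\cap H$; it depends continuously on $H$. If $c(H)=p$ for all $H$, we are done as above. Otherwise, consider the set of directions $u$ for which the chord of $K$ through $p$ in direction $u$ is not bisected by $p$. On this set, $p$ is a \enquote{false centre}, and for each plane $H$ containing such a chord, the half-turn of $H$ about $c(H)$ is a symmetry of $K\cap H$ moving $p$. I would then show the following.
\begin{enumerate*}[label=(\roman*)]
\item The boundary curves $\partial K\cap H$ of a pencil of planes through a common line $\ell\ni p$ are linked by these half-turns. The resulting functional equations for the radial function $\rho(u)$ of $K$ about $p$ force $\rho$ to satisfy a quadratic relation along each great circle.
\item Hence every section through $p$ in a neighbourhood of a non-bisected direction is an ellipse.
\item An open-closed argument extends this to all planes through $p$.
\end{enumerate*}
Once every planar section through $p$ is an ellipse, $K$ is an ellipsoid. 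Indeed, the mid-points of parallel chords then lie in planes, and Brunn's (Blaschke's) characterization applies.

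\emph{Main obstacle.} The hardest step is (i)–(ii): passing from \enquote{centrally symmetric with some centre $c(H)\ne p$} to \enquote{ellipse}. The first thing I would try is Rogers' trick. Take two planes $H_1,H_2$ through $p$ meeting in a line $\ell$. The half-turns about $c(H_1)$ and $c(H_2)$ both preserve the chord $K\cap\ell$, so they reflect it about a common midpoint $m\neq p$. Collecting these constraints shows that, for planes through $\ell$, the centres $c(H)$ all lie on a fixed plane through $m$. Varying $\ell$ then gives that the mid-points of the chords parallel to any given direction are coplanar, which is exactly the hypothesis of the ellipsoid characterization just mentioned. Controlling the degenerate situation where $c(H)=p$ on a large set of planes, so that the two alternatives mix, is where I expect the real difficulty. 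It is this situation that requires Larman's careful topological argument.
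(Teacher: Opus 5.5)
The paper gives no proof of this statement. It is quoted from Larman and used only as background, so your outline has to stand on its own, and it has genuine gaps.

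\textbf{The reduction to $n=3$ is circular.} The hypothesis concerns only hyperplane sections through $p$. For $n\geq 4$ and a $3$-flat $E\ni p$, the planar sections of $K\cap E$ through $p$ are sections of $K$ by $2$-flats of codimension $n-2\geq 2$. Nothing is assumed about these. A section of a centrally symmetric body by a flat missing its centre need not be symmetric; for instance, the cube $[-1,1]^3$ cut by $x+y+z=1$ is a triangle.

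Your \enquote{downward induction} does not repair this. To apply the $(n-1)$-dimensional theorem to $K\cap H$, you would already need its $(n-2)$-dimensional sections through $p$ to be centrally symmetric, which is exactly what is to be shown. Already for $n=4$, the bodies $K\cap E$ are the given symmetric hyperplane sections, yet the $3$-dimensional theorem cannot be applied to them. So the higher-dimensional case needs an argument carried out in $\mathbb R^n$ itself.

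The gluing of the two alternatives is also not a connectedness argument. The set of $E$ with $K\cap E$ symmetric about $p$ and the set of $E$ with $K\cap E$ an ellipsoid are both closed. They may overlap, in ellipsoids centred at $p$. So the fact that together they cover $\Gr_3$ does not make either one all of it.

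\textbf{The $3$-dimensional core, steps (i)--(iii), is only announced, and the one concrete mechanism you offer is false.} The half-turn of $H_i$ about $c(H_i)$ maps $\ell$ to the parallel line $2c(H_i)-\ell$. It preserves the chord $K\cap\ell$ only if $c(H_i)\in\ell$, so in general there is no common midpoint $m$. All you actually get is that, in each plane $H\supset\ell$, the chord on $\ell$ is carried to an equal, parallel chord on another line of $H$.

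Moreover, your target, coplanarity of midpoints of parallel chords, is Brunn's characterization of ellipsoids, i.e.\ essentially the conclusion itself. It cannot come from bookkeeping of centres alone. In a symmetric but non-elliptical section, the midpoints of chords in a given direction lie on a curve through the centre that need not be straight. The quadratic relation for $\rho$ in (i) is likewise asserted without derivation, and that is where the whole difficulty lies.

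\textbf{The case $p\notin\Int K$ is not treated.} The statement allows arbitrary $p\in\mathbb R^n$. This case is exactly what Larman added to the interior case of Aitchison, Petty and Rogers. Your argument uses the radial function about $p$ and chords bisected by $p$, both of which presuppose $p\in\Int K$. For $p\notin\Int K$, many hyperplanes through $p$ miss $K$, and the alternative \enquote{symmetric about $p$} is void. The outline says nothing about this case.
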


\noindent The aforementioned result of Larman, together with a much older result of S.P.~Olovyanishnikov \cite[Teorema and subsequent Obobshcheniye]{Olovyanishnikov}, inspired J.A.~Barker and D.G.~Larman to ask the following question:

\begin{conjecture}[{cf. \cite[Conjecture~1]{BARKER200179}}]
Let $K,S\subset\mathbb R^n$, $n\geq 3$, be convex bodies and let $S\subset\Int K$. Suppose that for every hyperplane $H$ supporting $S$, the section $K\cap H$ is centrally symmetric. Then $K$ is an ellipsoid.
\end{conjecture}

\noindent L.~Montejano and E.~Morales-Amaya verified \thref{con:02} in dimension $3$:

\begin{theorem}[{Shaken False Center Theorem \cite[Theorem~1]{S002557930000019X}}]\thlabel{thm:05}
Let $K\subset\mathbb R^3$ be an origin-symmetric convex body and let $\delta:\mathbb S^2\to\mathbb R$ be an even continuous map which is $C^1$ in a neighbourhood of $\delta^{-1}(\{0\})$. Denote $H_{\boldsymbol\xi}^\delta\colonequals\langle\boldsymbol\xi\rangle^\perp+\delta(\boldsymbol\xi)\boldsymbol\xi$, $\boldsymbol\xi\in\mathbb S^2$. If for every $\boldsymbol\xi\in\mathbb S^2$, either $\delta(\boldsymbol\xi)=0$ and $K\cap H_{\boldsymbol\xi}^\delta$ is an ellipse or $\delta(\boldsymbol\xi)\neq 0$ and $K\cap H_{\boldsymbol\xi}^\delta$ is centrally symmetric, then $K$ is an ellipsoid.
\end{theorem}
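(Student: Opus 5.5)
The plan is to reduce the statement to the classical fact that an origin-symmetric convex body in $\mathbb R^3$ all of whose central sections are ellipses is an ellipsoid (Brunn). So the goal is to show that $K\cap\langle\boldsymbol\xi\rangle^\perp$ is an ellipse for \emph{every} $\boldsymbol\xi\in\mathbb S^2$, not only for those in $Z\colonequals\delta^{-1}(\{0\})$.

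First I would set up the symmetric structure. For $\boldsymbol\xi\notin Z$ let $c(\boldsymbol\xi)$ be the center of the centrally symmetric section $K\cap H^\delta_{\boldsymbol\xi}$. Since $\delta$ is even, $H^\delta_{-\boldsymbol\xi}=-H^\delta_{\boldsymbol\xi}$. Since $K=-K$, the opposite section is $-(K\cap H^\delta_{\boldsymbol\xi})$, and its center is $c(-\boldsymbol\xi)=-c(\boldsymbol\xi)$. The center of a planar convex body depends continuously on the body, so $c$ is continuous and odd on $\mathbb S^2\setminus Z$. Write $c(\boldsymbol\xi)=\delta(\boldsymbol\xi)\boldsymbol\xi+v(\boldsymbol\xi)$ with $v(\boldsymbol\xi)\perp\boldsymbol\xi$.

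Second, I would handle the open set $\mathbb S^2\setminus Z$ via a Larman-type argument. Fix a direction $\boldsymbol\xi_0\notin Z$ and consider the chords of $K$ parallel to $H^\delta_{\boldsymbol\xi_0}$ that lie in both $H^\delta_{\boldsymbol\xi_0}$ and nearby planes $H^\delta_{\boldsymbol\xi}$. Each centrally symmetric section $K\cap H^\delta_{\boldsymbol\xi}$ pairs parallel supporting lines of the section. By origin symmetry, each supporting line $\ell$ of $K\cap H^\delta_{\boldsymbol\xi}$ comes with the supporting line $-\ell$ of the opposite section. Reflecting in $c(\boldsymbol\xi)$ and then in $0$ gives a translation by $2c(\boldsymbol\xi)$. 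This translation maps the boundary points of one section to boundary points lying on a parallel plane. So $\partial K$ contains, near the relevant points, patches invariant under such compositions, and the plane sections through a moving point are symmetric. As in the proof of the False Center Theorem (\thref{thm:04}), one deduces that the central section parallel to $H^\delta_{\boldsymbol\xi}$ has a one-parameter family of affine symmetries, which forces it to be an ellipse. I would try to run exactly Larman's two-dimensional argument inside the planes $\langle\boldsymbol\xi\rangle^\perp$, using the nonzero offset $\delta(\boldsymbol\xi)$ as the \enquote{false center}.

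Third comes the interface $\partial Z$, which I expect to be the main obstacle. Here $\delta\to0$ and the offset sections degenerate to central ones, so the Larman mechanism loses its leverage. This is why $C^1$ regularity of $\delta$ near $Z$ is assumed. I would differentiate the symmetry condition of $K\cap H^\delta_{\boldsymbol\xi}$ along curves crossing $\partial Z$ transversally. The first-order term should say that the central ellipse at $\boldsymbol\xi\in\partial Z$ is compatible, to first order, with the symmetric sections on the other side. That compatibility would let the ellipse conclusion propagate continuously from $Z$ into the neighboring components of $\mathbb S^2\setminus Z$ and back. If differentiating directly is too delicate, my fallback is the duality of the previous section. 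Under it, the centrally symmetric offset sections become supporting cones of $K^\circ$ with a projective involutive symmetry, and the ellipse sections become quadric cones. A closedness argument on the set of $\boldsymbol\xi$ with elliptic central section should then finish. Once all central sections are ellipses, Brunn's theorem gives that $K$ is an ellipsoid.
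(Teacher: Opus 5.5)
The paper does not prove this statement; it only quotes it from Montejano and Morales-Amaya. Your proposal therefore has to stand on its own, and it does not: its core, Step~2, is unsupported and in fact false as formulated. For a single direction, the hypothesis together with $K=-K$ says only that $K\cap H^\delta_{\boldsymbol\xi}$ and $K\cap H^\delta_{-\boldsymbol\xi}=-(K\cap H^\delta_{\boldsymbol\xi})$ are translates of each other by $2c(\boldsymbol\xi)$. That relates two parallel off-center sections; it gives no symmetry of the central section $K\cap\langle\boldsymbol\xi\rangle^\perp$. Since the translation vector varies with $\boldsymbol\xi$, no patch of $\partial K$ is invariant under anything, so the \enquote{one-parameter family of affine symmetries} never materializes. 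Larman's argument (\thref{thm:04}) exploits the whole two-parameter family of planes through one fixed point. The planes $H^\delta_{\boldsymbol\xi}$ have no common point, so using $\delta(\boldsymbol\xi)$ \enquote{as the false center} has no content. More decisively, no argument that uses only the planes $H^\delta_{\boldsymbol\xi}$ with $\boldsymbol\xi$ near $\pm\boldsymbol\xi_0$ can succeed. Take $K=B\times[-1,1]$ with $B\subset\mathbb R^2$ origin-symmetric but not an ellipse, and $\delta\equiv\frac12$ near $\pm\hat{\boldsymbol e}_3$. For $\boldsymbol\xi$ close to $\hat{\boldsymbol e}_3$, the plane $H^\delta_{\boldsymbol\xi}$ meets $K$ in the graph of an affine function over all of $B$. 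That section is an affine image of $B$, hence centrally symmetric. Yet $K\cap\langle\hat{\boldsymbol e}_3\rangle^\perp=B\times\{0\}$ is not an ellipse.

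So ellipticity of central sections off $Z$ can only come from global information. Note also that $Z$ may be empty (e.g., $\delta\equiv d\neq 0$ is admissible). Then there are no elliptic sections to propagate, and Step~3 never starts. In general, closedness of the set of $\boldsymbol\xi$ with elliptic central section gives nothing without an openness statement, and the example above shows that openness cannot be extracted from local data away from $Z$. Step~3 itself is only a hope (\enquote{should say}, \enquote{would let}); you never compute what differentiating the symmetry condition across $\partial Z$ actually produces. The dual reformulation via supporting cones of $K^\circ$ is a restatement, not an argument. What is missing is a genuinely global mechanism over the whole sphere, feeding into a classical characterization of ellipsoids. One candidate is a topological argument exploiting that $c$ is odd and that $\langle c(\boldsymbol\xi),\boldsymbol\xi\rangle=\delta(\boldsymbol\xi)\neq 0$ off $Z$, with the $C^1$ hypothesis controlling what happens as $\delta\to 0$. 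Your final reduction to Brunn's theorem is correct, but it is the easy part.
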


\noindent Recently, E.~Morales-Amaya proved also the following:

\begin{theorem}[{\cite[Theorem~1]{moralesamaya2023}}]\thlabel{thm:07}
Let $K\subset\mathbb R^n$, $n\geq 3$, be an origin-symmetric convex body and let $B\subset\Int K$ be a Euclidean ball that does not contain the origin. If all the intersections $K\cap H$ of $K$ with affine hyperplanes $H$ tangent to $B$ are centrally symmetric, then $K$ is an ellipsoid.
\end{theorem}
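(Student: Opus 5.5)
Theorem~\ref{thm:07} is a result quoted from the literature, but in the spirit of this section I would try to prove it from Larman's False Center Theorem~\ref{thm:04}. The idea is to use central symmetry of the body $K$ itself to turn the tangent family into a pencil through a single point.

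\textbf{Step 1: the centers of the sections.} Let $B$ have center $b$ and radius $r$. Since $0\notin B$, we have $|b|>r$. For every unit vector $\boldsymbol\xi$, the hyperplane $H_{\boldsymbol\xi}=\{x:\langle x,\boldsymbol\xi\rangle=\langle b,\boldsymbol\xi\rangle+r\}$ is tangent to $B$. By hypothesis $K\cap H_{\boldsymbol\xi}$ has a center $c(\boldsymbol\xi)$. Because $K=-K$, the section $K\cap(-H_{\boldsymbol\xi})$ is centrally symmetric with center $-c(\boldsymbol\xi)$.

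\textbf{Step 2: a candidate false center.} I would show that, for a suitable choice of tangent hyperplanes, the sections through a fixed interior point $p\neq 0$ are all centrally symmetric, so that Theorem~\ref{thm:04} applies. A single tangent family never covers the whole pencil through $p$. So the plan is to work with a nearby point instead.

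\begin{itemize}
\item Take a point $p$ close to $\partial B$ but outside $B$.
\item Hyperplanes through $p$ tangent to $B$ form a cone. By varying the ball, i.e.\ passing to smaller concentric balls $B'\subset B$ that are also tangent to given hyperplanes, one cannot obtain all directions.
\item Instead I would exploit a classical lemma (Aleksandrov/Brunn type): if all sections in a neighbourhood of a hyperplane family are centrally symmetric, the map $c$ is continuous and the midpoints of parallel chords lie on hyperplanes locally.
\item Comparing the two parallel sections by $H_{\boldsymbol\xi}$ and $-H_{-\boldsymbol\xi}$ gives two symmetric parallel sections. For parallel chords of $K$ in the slab between them, this should force the midpoint locus in direction $\boldsymbol\xi$ to be affine.
\end{itemize}

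\textbf{Step 3: reduction to Blaschke.} The key step is to prove that for an open set of directions, the midpoints of all chords of $K$ parallel to a line in $H_{\boldsymbol\xi}$ are coplanar. Equivalently, the shadow boundaries in those directions are flat. The local version of Blaschke's theorem \cite[Theorem~2.2]{flatgrazesconvexbodies} then gives that $K$ coincides locally with a quadric. Origin-symmetry together with analyticity of the quadric patches, glued over overlapping open sets of directions, would give that $K$ is an ellipsoid.

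\textbf{Main obstacle.} I expect the main difficulty to be Step 3. Centrality of a single family of sections does not immediately give planar midpoint loci. I would try to use the dual viewpoint of this section: under polarity, centrally symmetric sections correspond to supporting cones of $K^\circ$ with a projective (axial) symmetry. I would then argue, via Larman's theorem applied in lower-dimensional sections through $0$, that the centers $c(\boldsymbol\xi)$ lie on a hyperplane conjugate to $\boldsymbol\xi$.
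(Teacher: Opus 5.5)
The paper gives no proof of this statement; it is quoted from \cite{moralesamaya2023}. So your proposal has to stand on its own, and it does not. None of the routes you sketch is carried out, and the step that carries the content of the theorem is only asserted. The underlying obstruction is that the hypothesis concerns a thin family of hyperplanes. Those tangent to $B$ form an $(n-1)$-parameter family inside the $n$-dimensional space of affine hyperplanes. Add, via $K=-K$, those tangent to $-B$ and the trivially symmetric ones through $0$: for every $p\neq 0$ you still only cover a subset of positive codimension in the pencil $\Gr_{n-1}(\mathbb R^n)+p$. Hence Theorem~\ref{thm:04} never applies, as you concede in Step~2, which then abandons the plan announced at its start. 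Likewise, no Aleksandrov/Brunn-type lemma requiring symmetric sections for all hyperplanes near a given one is available, since your family has empty interior. The last bullet of Step~2 is also unfounded. In a fixed direction $\boldsymbol\xi$ the hypothesis gives only the parallel sections at heights $\pm\langle b,\boldsymbol\xi\rangle\pm r$. Knowing that two (or four) parallel sections are centrally symmetric says nothing about the midpoints of chords of $K$ lying strictly between them. Whatever rigidity exists must come from the interaction of sections with different normals, and that is exactly what is missing.

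Step~3 therefore rests on an unproved claim: coplanarity of chord midpoints for an open set of directions. This is essentially as strong as the theorem itself; for all directions it is Brunn's characterization of ellipsoids. The word ``equivalently'' is also wrong. Coplanar midpoints of all chords in direction $\boldsymbol v$ means $K$ is invariant under an affine reflection, which implies a flat shadow boundary but not conversely. Moreover, \cite[Theorem~2.2]{flatgrazesconvexbodies} only says that shadow boundaries for an open set of directions agree with those of a quadric. It controls only the band of $\partial K$ those boundaries sweep, so your ``gluing by analyticity'' needs flatness for all directions, at which point Blaschke's theorem applies directly. The fallback in your last paragraph fails as well. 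Sections of $K$ through $0$ are symmetric for free, and the hypothesis does not descend to a lower-dimensional section $K\cap E$. The reason is that $(K\cap H_{\boldsymbol\xi})\cap E$ is guaranteed to be centrally symmetric only when $E$ contains the center $c(\boldsymbol\xi)$, whose location is precisely what you do not know. One concrete consequence of origin symmetry that you never use is $K\cap(-H_{\boldsymbol\xi})=(K\cap H_{\boldsymbol\xi})-2c(\boldsymbol\xi)$. Thus the hypothesis yields an $(n-1)$-parameter family of pairs of parallel \emph{translated} sections, tangent to $B$ and $-B$ respectively. A workable argument has to extract global information from this two-sided structure, not from a single direction at a time.
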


\noindent For a much more detailed account, we refer the reader to the expository paper \cite[\S 4]{Soltan2019}.\\

Now, the dual counterpart of \thref{con:02} reads:

\begin{dualtheorem}{con:02}[{cf. \cite[Conjecture~2]{Bianchi1987}}]
Let $K\subset\mathbb R^n$, $n\geq 3$, be a strictly convex body and let $S$ be a hypersurface, which is the image of an embedding of the sphere $\mathbb S^{n-1}$, such that $K$ is contained in the interior of $S$. If any supporting cone of $K$ with apex on $S$ is affinely axially symmetric and (with, possibly, a few exceptions) the axis of symmetry does not contain a possibly existing center of symmetry of $K$, then $K$ is an ellipsoid.
\end{dualtheorem}
%
% \begin{dualtheorem}{thm:04}[{cf. \cite[Theorem]{ojm/1353054527}, \cite[Theorem~2]{Bianchi1987}, \cite[Theorem~5]{Gonzalez_Garcia2022}}]
% Let $K\subset\mathbb R^n$, $n\geq 3$, be a convex body and let $S$ be a closed convex hypersurface containing $K$ in its interior. If the supporting cone of $K$ with apex at $p$ is always an ellipsoidal cone for any point $p$ on $S$, then $K$ is an ellipsoid.
% \end{dualtheorem}

\noindent Again, as a first step towards verification, the authors proved \thref{con:02}* under the stronger assumption that every supporting cone of $K$ with apex on $L$ is ellipsoidal \cite[Theorem~2]{Bianchi1987}. The dual counterpart of the False Center \thref{thm:04}, which must necessarily also be true, yields a more general variant of \cite[Theorem~4]{10.1007/s00454-024-00712-3}, and the dual counterpart of \thref{thm:07} likewise yields a more general variant of \cite[Theorem~2]{10.1007/s00454-024-00712-3}. Admittedly, many known results that do not assume the existence of a common fixed point are difficult to transfer, because their formulations in the primal setting would necessarily involve some rather special projective symmetries. This is yet another argument for the naturalness of projective transformations in this context.

\subsection{Linearly equivalent supporting cones}

In \cite{Morales_Jeronimo_Verdusco_2022}, E.~Morales-Amaya, J.~Jer\'onimo-Castro, and D.J.~Verdusco-Hern\'andez were interested in the problem of determining convex bodies $K\subset\mathbb R^n$ for which all the supporting cones with apexes in some hypersurface $S$ are linearly equivalent (cf. \cite[Problem~2]{Morales_Jeronimo_Verdusco_2022}). Since the origin is then automatically a common fixed point of all the transformations, the problem is completely equivalent by duality to the classical Banach's Conjecture \cite[Remarques~au~chapitre~XII]{banach1932theorie}. For a detailed account of this otherwise interesting problem, we refer the reader to the expository paper \cite[\S 6]{Soltan2019}. Note that the inherently topological proofs of the non-integrable Banach's Conjecture do not use the assumption that all the sections pass through a fixed point. The same question for projectively equivalent sections (and thus by duality also for projectively equivalent supporting cones), which, as we have already remarked, seems even more natural, was asked by H.~Auerbach in the Scottish Book \cite[84.~Problem: Auerbach]{scottish}.\\

However, instead of the subgroup of the general linear group, the authors eventually considered the translation group and proved the following theorem:

\begin{theorem}[{cf. \cite[Theorem~1]{Morales_Jeronimo_Verdusco_2022}}]\thlabel{thm:06}
Let $K\subset\mathbb R^n$, $n\geq 3$, be a strictly convex body and let $S$ be a hypersurface, which is the image of an embedding of the sphere $\mathbb S^{n-1}$, such that $K$ is contained in the interior of $S$. Suppose that for every $x\in S$ there exists a different $y\in S$ such that the supporting double-cones of $K$ with apexes at $x$ and $y$ differ by a translation. Then $K$ and $S$ are both centrally symmetric with respect to the same point.
\end{theorem}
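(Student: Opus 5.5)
The plan is to reduce the statement to planar sections through the line $xy$ and read off central symmetry from parallel supporting lines. Fix $x\in S$ and let $y\in S$, $y\neq x$, be the partner point, with $\boldsymbol v\colonequals y-x$, so that $C_y=C_x+\boldsymbol v$, where $C_p$ denotes the supporting double-cone of $K$ with apex $p$.

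\emph{Planar sections.} Take any $2$-plane $P$ containing the line $xy$ and meeting $\Int K$. Then $C_x\cap P$ is the pair of lines through $x$ tangent to the planar body $K_P\colonequals K\cap P$, and likewise for $y$. Translation by $\boldsymbol v$ preserves $P$, so the two tangent lines from $y$ are parallel to the two tangent lines from $x$. A strictly convex planar body has exactly two supporting lines in each direction. Hence a tangent line from $x$ with direction $\boldsymbol d$ and the tangent line from $y$ with the same direction are either equal or opposite supporting lines of $K_P$.

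The first case forces the line $xy$ itself to be tangent. Since $K$ lies in the interior of $S$, this can happen only for a meagre set of planes, which we discard by continuity. So in a generic plane $P$, $x$ and $y$ are opposite vertices of the parallelogram circumscribed about $K_P$ with sides parallel to the two tangent directions. Going back to the ambient space, this says the following. For every direction $\boldsymbol d$ of a generatrix of $C_x$, the contact point $a(\boldsymbol d)$ of that generatrix with $\partial K$ and the contact point $b(\boldsymbol d)$ of the parallel generatrix of $C_y$ lie on the two opposite supporting hyperplanes of $K$ that contain the direction $\boldsymbol d$.

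\emph{A common center.} Next I would show that the midpoint $c\colonequals\tfrac12(x+y)$ does not depend on $x$. The map $x\mapsto y$ is a fixed-point-free map of $S\cong\mathbb S^{n-1}$. The plan is to show, first, that it can be chosen continuously, using strict convexity and the rigidity of the parallelogram configuration above. The second step is to show that the partner $y$ is uniquely determined as the point where the translated cone $C_x+\boldsymbol v$ is again tangent to $K$.

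The parallelogram relation says that the reflection $z\mapsto 2c-z$ maps the lines of $C_x$ to supporting lines of $K$ through $y$. Comparing with $C_y=C_x+\boldsymbol v$, this gives $2c-C_x=C_y$ as sets, i.e.\ $C_y$ is also the point reflection of $C_x$. Varying $x$ along $S$ and differentiating this identity, I expect to get that the centers $c(x)$ are locally constant. Alternatively, a degree argument applies: the antipodal-type map $x\mapsto y$ on the sphere, together with the fact that $K$ is squeezed between reflected cones, should pin $c$ down.

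\emph{Conclusion.} Once $c$ is constant, $K$ lies in the intersection of the regions bounded by all cones $C_x$ and their reflections $2c-C_x$, $x\in S$. Every supporting hyperplane of $K$ is a supporting hyperplane of some $C_x$, since it meets $S$. So every supporting hyperplane of $K$ is mapped by $z\mapsto 2c-z$ to a supporting hyperplane, and therefore $K=2c-K$. Finally, $S$ is symmetric about $c$ because $x\mapsto 2c-x=y$ maps $S$ into $S$.

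The main obstacle is the middle step: showing that the centers $c(x)$ coincide. The pointwise information only says that each $C_y$ is a translate, and a priori the partner could jump discontinuously. I would first try to prove uniqueness and continuity of $y$ from strict convexity, and then run the local-constancy argument.
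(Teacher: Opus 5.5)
The paper does not prove this theorem; it quotes it from \cite{Morales_Jeronimo_Verdusco_2022} only to motivate its dual. So your proposal has to stand on its own, and it does not: the step that carries all the content, that $c=\tfrac12(x+y)$ is independent of $x$, is only announced. Your planar-section step and your conclusion step are essentially fine, modulo the remarks at the end, but the mechanisms you suggest for the middle step cannot work as stated.

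The identity $2c-C_x=C_y$ is a tautology. A double cone is symmetric about its apex, and reflection in $c$ equals reflection in $x$ followed by translation by $2(c-x)=y-x$. Hence $2c-C_x=C_x+(y-x)$ for \emph{any} two points $x,y$. Differentiating it along $S$ therefore yields nothing beyond the hypothesis, and in any case $S$ is only a topological sphere and the partner map is at best continuous.

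More tellingly, nothing in your argument uses $n\geq 3$. In the plane, every exterior point $x$ of a smooth strictly convex body has a partner: the opposite vertex of the circumscribed parallelogram whose sides are parallel to the two tangents from $x$. The centre of that parallelogram cannot be independent of $x$ unless $K$ is centrally symmetric. So constancy of $c$ is exactly where the dimension must enter. Neither continuity or uniqueness of $y$ nor an unspecified degree argument supplies it.

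What is missing is a way to compare different apexes through \emph{common supporting hyperplanes}.
\begin{itemize}
\item Let $H$ be a supporting hyperplane of $K$ through $x\in S$ with outer normal $u$. Then $2c(x)-H=H+(y-x)$ supports $C_y$ along a generatrix, hence supports $K$. It differs from $H$ once you know that $xy$ meets $\Int K$, so it is the opposite supporting hyperplane. This gives $\langle u,c(x)\rangle=\tfrac12\bigl(h_K(u)-h_K(-u)\bigr)$.
\item Consequently, two points $x_1,x_2\in S$ on a common supporting hyperplane with normal $u$ satisfy $\langle u,c(x_1)-c(x_2)\rangle=0$.
\item If the line $x_1x_2$ misses $K$, the normals of the supporting hyperplanes containing it span $(x_1-x_2)^\perp$, so $c(x_1)-c(x_2)\parallel x_1-x_2$.
\item A supporting hyperplane $H$ meets $S$ in a set separating the contact point from infinity inside $H$; this is where $\dim H=n-1\geq 2$ is used. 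So there are plenty of non-degenerate triangles of such pairs, and comparing them is a natural route to $c(x)=\alpha x+\beta$.
\item Then $h_K(-u)=(1-2\alpha)h_K(u)-2\langle u,\beta\rangle$ for every $u$. Applying this twice forces $\alpha=0$, so $c=\beta$ is constant.
\end{itemize}
An argument of this kind has to replace your middle step.

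Two smaller points. First, the case in which $xy$ is tangent to $K$ depends only on $x$ and $y$. It therefore affects \emph{every} plane through $xy$ and cannot be discarded by choosing the plane generically. It has to be excluded directly: for strictly convex $K$, sections by planes through $xy$ close to a supporting hyperplane shrink to the contact point, so they cannot touch both the second tangent line from $x$ and its translate through $y$. Second, your final step needs $c\in\Int K$, so that $2c-H$ is the opposite supporting hyperplane rather than $H$ itself.
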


\noindent Now, its dual counterpart, which must necessarily also be true, reads:

\begin{dualtheorem}{thm:06}
Let $K\subset\mathbb R^n$, $n\geq 3$, be a strictly convex body and let $\mathcal S$ be a family of hyperplanes, which is the image of an embedding of the tangent bundle of the sphere $\mathbb S^{n-1}$, such that every hyperplane in $\mathcal S$ intersects the interior of $K$. Suppose that for every $X\in\mathcal S$ there exists a different $Y\in\mathcal S$ such that the sections of $K$ by hyperplanes $X$ and $Y$ differ by a central projection with respect to the origin. Then $K$ and $S$ are both invariant under the same non-trivial projective involution fixing the origin.
\end{dualtheorem}

\noindent Since every projective involution between two embedded hyperplanes is a restriction of a central projection, it is natural to remove the dependence on the choice of the origin and ask the following, slightly more general question:

\begin{conjecture}\thlabel{con:04}
Let $K\subset\mathbb R^n$, $n\geq 3$, be a strictly convex body and let $\mathcal S$ be a family of hyperplanes, which is the image of an embedding of the tangent bundle of the sphere $\mathbb S^{n-1}$, such that every hyperplane in $\mathcal S$ intersects the interior of $K$. Suppose that for every $X\in\mathcal S$ there exists a different $Y\in\mathcal S$ such that the sections of $K$ by hyperplanes $X$ and $Y$ differ by a non-trivial projective involution. Then $K$ and $\mathcal S$ are both invariant under the same non-trivial projective involution.
\end{conjecture}

\noindent Further, since every homothety between two parallel embedded hyperplanes is a restriction of a projective involution, \thref{con:04} may be viewed as a projective counterpart of the following question:

\begin{conjecture}[{cf. \cite[Conjecture~1]{Morales_Jeronimo_Verdusco_2022}}]\thlabel{con:05}
Let $K,S\subset\mathbb R^n$, $n\geq 3$, be convex bodies and let $S\subset\Int K$. Suppose that for every pair of parallel hyperplanes $H_1,H_2$ supporting $S$, the sections $K\cap H_1,K\cap H_2$ are inversely homothetic with respect to the point lying on the segment connecting the contact points of $K$ with $H_1,H_2$. Then $K,L$ are centrally symmetric with respect to the same point.
\end{conjecture}

\subsection{Flat intersections of supporting cones}

Finally, in a recent paper \cite{moralesamaya2025characterizationsellipsoidsmeansstrong}, E.~Morales-Amaya considered a different, interesting problem, which is another piece completing our puzzle, expanding it in a new direction. Namely, he posed the following conjecture:

\begin{conjecture}[{cf. \cite[Conjecture~1]{moralesamaya2025characterizationsellipsoidsmeansstrong}}]\thlabel{con:03}
Let $K\subset\mathbb R^n$, $n\geq 3$, be a convex body and let $S$ be a hypersurface, which is the image of an embedding of the sphere $\mathbb S^{n-1}$, such that $K$ is contained in the interior of $S$, and $S$ is star-convex with respect to the origin. Suppose that for every $x\in S$ there exists a different $y\in S$ such that the line $xy$ passes through $o$ and the intersection of the boundaries of supporting cones of $K$ with apexes at $x$ and $y$ is flat. Then $K$ is an ellipsoid.
\end{conjecture}

\noindent Now, its dual counterpart, which must necessarily be equivalent, reads:

\begin{dualtheorem}{con:03}
Let $K\subset\mathbb R^n$, $n\geq 3$, be a convex body and let $\mathcal S$ be a family of hyperplanes, which is the image of an embedding of the tangent bundle of the sphere $\mathbb S^{n-1}$, such that every hyperplane in $\mathcal S$ intersects the interior of $K$. Suppose that for every $X\in\mathcal S$ there exists a different $Y\in\mathcal S$ such that $X$ and $Y$ are parallel and the sections $K\cap X$ and $K\cap Y$ are directly homothetic. Then $K$ is an ellipsoid.
\end{dualtheorem}

\noindent Indeed, since $(\ell_1\cap\ell_2)^\circ=\Span\{\ell_1^\circ,\ell_2^\circ\}$, two lines $\ell_1,\ell_2$ passing through points $x,y$ respectively, intersect if and only if two $2$-codimensional flats $\ell_1^\circ,\ell_2^\circ$ contained in parallel hyperplanes $x^\circ,y^\circ$ respectively, span a hyperplane, i.e., are parallel. Further, the point of intersection $\ell_1\cap\ell_2$ lies on a hyperplane $H$ if and only if the hyperplane spanned by $\ell_1^\circ,\ell_2^\circ$ passes through a point $H^\circ$. It follows that the support functions of $K\cap x^\circ$ and $K\cap y^\circ$ are proportional, whence the sections are homothetic with respect to $H^\circ$.\\

Note that for every $2$-codimensional flat supporting $K\cap x^\circ$ and contained in $x^\circ$, there are precisely two parallel $2$-dimensional flats supporting $K\cap y^\circ$ and contained in $y^\circ$. The sign of homothety depends on the choice of one of them, which corresponds to the choice of which parts of the supporting double cones we intersect in the primal setting. In our case, because we consider the hyperplane $H$ intersecting the interior of $K$ in the primal setting, we must have the center of homothety $H^\circ$ contained in the exterior of $S^\circ$ in the dual setting, which corresponds to the positive-scale one.\\

Under the additional assumption that the intersection of the boundaries of supporting cones of $K$ with apexes at $x$ and $y$ is contained in some convex hypersurface, \thref{con:03} was confirmed by the author \cite[Theorem~3]{moralesamaya2025characterizationsellipsoidsmeansstrong}. Note the striking similarity between \thref{con:03}* and \thref{con:05}. The only difference is the sign of the scale factor of the homothety. The author asked in \cite[Problem~1]{moralesamaya2025characterizationsellipsoidsmeansstrong} whether \thref{con:03} holds under the additional assumption that $K$ and $S$ are origin-symmetric. In this case, in the equivalent \thref{con:03}*, we have that the sections $K\cap X$ and $K\cap Y$ are both directly and inversely homothetic, which means that they are both centrally symmetric. Hence the problem is reduced to \thref{con:02}, which was already confirmed in dimension $3$ (cf. \thref{thm:05}).\\

Note that if we additionally assume that the hyperplane containing the intersection of the boundaries of supporting cones of $K$ with apexes at $x$ and $y$ passes through the origin, then the corresponding sections $K\cap x^\circ$ and $K\cap y^\circ$ differ by a translation. In this disguise, the conjecture is already known to hold in dimension $3$ \cite[Theorem~1]{S0025579310000379}. Under assumptions that $S$ is the unit sphere and the center of symmetry of each section is at the contact point, the conjecture was independently confirmed in arbitrary dimension in \cite[Theorem~2]{Gonzalez_Garcia2022}.

%%%%%%%%%%%%%%%%%%%%%%%%%%%%%%%%%%%%%%%%%%%%%%%%%%%%%%%%%%%%%%%%%%%%%%
\section{Definitions and basic concepts}
%%%%%%%%%%%%%%%%%%%%%%%%%%%%%%%%%%%%%%%%%%%%%%%%%%%%%%%%%%%%%%%%%%%%%%

In this section, we recall several notions and classical results from differential geometry that we will use in our proofs. For background on Riemannian differential geometry, we refer the interested reader to \cite{do2016differential}, and for affine differential geometry to \cite{nomizu1994affine,Buchin}.\\

Let $M\subset\mathbb R^n$ be a regular orientable hypersurface in which a differentiable field of unit normal vectors $\boldsymbol N$ has been chosen.

\begin{definition}[{\cite[\S 3-2, Definition~1]{do2016differential}}]
Let $M\subset\mathbb R^n$ be a hypersurface with an orientation $\boldsymbol N$. The map $\boldsymbol N:M\to\mathbb S^{n-1}$ is called the \emph{Gauss map} of $M$.
\end{definition}

\noindent It is straightforward to verify that the Gauss map is differentiable. The differential $\mathrm d\boldsymbol N_p$ of $\boldsymbol N$ at $p\in M$ is a linear map from $T_pM$ to $T_{\boldsymbol N(p)}\mathbb S^{n-1}$. Since $T_pM$ and $T_{\boldsymbol N(p)}\mathbb S^{n-1}$ are parallel planes, $\mathrm d\boldsymbol N_p$ can be looked upon as a linear map on $T_pM$.

\begin{definition}[{\cite[\S 3-2, Definition~2]{do2016differential}}]
The quadratic form $\mathrm{II}_p$, defined in $T_pM$ by $\mathrm{II}_p(\boldsymbol v)=-\langle\mathrm d\boldsymbol N_p(\boldsymbol v),\boldsymbol v\rangle$ is called the \emph{second fundamental form} of $M$ at $p$.
\end{definition}

The second fundamental form has rank $n-1$, and can be treated as a non-degenerate metric on $M$. This is the basic assumption on which W.~Blaschke developed the affine differential geometry of hypersurfaces.

\begin{definition}[{\cite[II.3]{nomizu1994affine}}]
It is well known that there exists a canonical choice of a transversal vector field $\xi$ called the \emph{affine normal field} or \emph{Blaschke normal field} \cite[Definition~II.3.1]{nomizu1994affine}. The affine normal vector field $\xi$ gives rise to the induced connection $\nabla$, the affine fundamental form $h$, which is traditionally called the \emph{affine metric}, and the affine shape operator $S$ determined by the formulas
\begin{gather*}
D_XY=\nabla_XY+h(X,Y)\xi,\\
D_X\xi=-SX.
\end{gather*}
We shall call $(\nabla,h,S)$ the \emph{Blaschke structure} on the hypersurface $M$ \cite[Definition~II.3.2]{nomizu1994affine}. From the Codazzi equation for $h$, we see that the cubic form
\begin{equation}\label{eq:15}C(X,Y,Z)\colonequals(\nabla_Xh)(Y,Z)\end{equation}
is symmetric in $X$, $Y$ and $Z$ \cite[II.4]{nomizu1994affine}.
\end{definition}

\noindent The following is an important classical theorem due to H.~Maschke (for analytic surfaces), G.A.~Pick (for surfaces), and L.~Berwald (for hypersurfaces):

\begin{theorem}[{\cite[Theorem~II.4.5]{nomizu1994affine}}]\thlabel{thm:03}
Let $f:M\to\mathbb R^n$, $n\geq 3$, be a non-degenerate hypersurface with Blaschke structure. If the cubic form \eqref{eq:15} vanishes identically, then $f(M)$ is hyperquadric in $\mathbb R^n$.
\end{theorem}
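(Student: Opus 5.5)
The statement is the classical Maschke--Pick--Berwald theorem, and I would prove it by showing that the vanishing of the cubic form forces $f$ to be an affine sphere whose structure equations coincide with those of a quadric, then appeal to rigidity. Throughout, $\dim M=n-1\geq 2$.

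\emph{Step 1: $\nabla$ is the Levi-Civita connection of $h$.} Set $\hat\nabla\colonequals\nabla^h$, the Levi-Civita connection of the non-degenerate metric $h$, and write $\nabla=\hat\nabla+K$ with the difference tensor $K$. Since $\nabla$ is torsion-free, $K$ is symmetric. Expanding $C(X,Y,Z)=(\nabla_Xh)(Y,Z)=-h(K_XY,Z)-h(Y,K_XZ)$ and using the total symmetry of $C$ gives $C(X,Y,Z)=-2h(K_XY,Z)$. Hence $C\equiv 0$ forces $K\equiv 0$, i.e. $\nabla=\hat\nabla$. In particular the conjugate connection $\bar\nabla$ (defined by $Xh(Y,Z)=h(\nabla_XY,Z)+h(Y,\bar\nabla_XZ)$) also equals $\hat\nabla$.

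\emph{Step 2: $f$ is an affine sphere.} For a Blaschke structure the Gauss equation for $\nabla$ reads
\[R(X,Y)Z=h(Y,Z)SX-h(X,Z)SY,\]
and for the conjugate connection
\[\bar R(X,Y)Z=h(SY,Z)X-h(SX,Z)Y.\]
By Step 1 we have $R=\bar R$, so
\[h(Y,Z)SX-h(X,Z)SY=h(SY,Z)X-h(SX,Z)Y\]
for all $X,Y,Z$. Contracting in $X$ and using the $h$-symmetry of $S$ gives
\[(\operatorname{tr}S)\,h(Y,Z)-h(SY,Z)=(n-1)\,h(SY,Z)-h(SY,Z),\]
so $(n-1)S=(\operatorname{tr}S)\,I$. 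Thus $S=H\cdot I$ with $H=\operatorname{tr}S/(n-1)$. The Codazzi equation $(\nabla_XS)Y=(\nabla_YS)X$ then becomes $(XH)Y=(YH)X$. Because $\dim M\geq 2$, this gives $dH=0$, so $H$ is constant (assume $M$ connected).

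\emph{Step 3: identification with a quadric.} The hypersurface now has structure equations $D_XY=\hat\nabla_XY+h(X,Y)\xi$ and $D_X\xi=-HX$, where $h$ has constant sectional curvature $H$ by the Gauss equation. Every connected nondegenerate hyperquadric with centre (or a paraboloid when $H=0$) of the same signature and affine mean curvature $H$ realizes exactly these data. I would then invoke the uniqueness part of the fundamental theorem of affine hypersurface theory, \cite[Theorem~II.8.1]{nomizu1994affine}. Since the data $(\hat\nabla,h,S=HI)$ are locally isometric to the quadric's data (both are space forms of curvature $H$), $f$ is locally affinely congruent to that quadric. By connectedness, $f(M)$ lies in a single hyperquadric.

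Alternatively, one can argue directly. If $H\neq 0$, then $\xi=-H(f-c)$ for a constant $c$, and one checks that $Q(f-c)$ is constant for the quadratic form $Q$ obtained by transporting $h\oplus(\text{normal part})$. If $H=0$, then $\xi$ is constant and $\nabla$ is flat; in $\nabla$-affine coordinates the equations $D_{\partial_i}D_{\partial_j}f=h_{ij}\xi$ with $h_{ij}$ constant integrate to a quadratic graph.

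The main obstacle is Step 3. One must either cite the rigidity theorem carefully, including its hypotheses (simple connectivity and the local-to-global passage), or carry out the direct integration and handle the signature cases when $h$ is indefinite. Steps 1 and 2 are routine tensor algebra.
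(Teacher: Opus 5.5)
The paper gives no proof of this statement. It is quoted from Nomizu--Sasaki as background, and the main proofs invoke \thref{thm:08} instead, so there is no in-paper argument to compare with. Your proof is correct and follows the standard textbook route:
\begin{itemize}
\item $C(X,Y,Z)=-2h(K_XY,Z)$ gives $\nabla=\bar\nabla=\hat\nabla$.
\item Comparing the two Gauss equations and contracting gives $(n-1)S=(\operatorname{tr}S)\,I$, and your trace computation is right. You could even skip the conormal connection: since $\nabla=\hat\nabla$ is metric, its curvature is $h$-skew, and applying this to the Gauss equation yields the same identity.
\item The Codazzi equation with $\dim M\geq 2$ then makes $H$ constant.
\end{itemize}

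For Step~3, your direct version is the cleaner one. At each point set $G(f_*X,f_*Y)=h(X,Y)$, $G(f_*X,\xi)=0$ and $G(\xi,\xi)=H$. Then $\nabla h=0$ and $D_X\xi=-HX$ show that $G$ is a constant nondegenerate form on $\mathbb R^n$; the weight $H$ on the normal part is exactly what cancels the mixed terms. Moreover $G(f-c,f-c)=G(\xi,\xi)/H^2=1/H$ holds globally on connected $M$, so no patching is needed.

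The rigidity route also works. Every signature and every value of $H$ is realized by a central quadric or a paraboloid, and pseudo-Riemannian space forms of equal signature and curvature are locally isometric. Two corrections, though:
\begin{itemize}
\item The uniqueness half of the fundamental theorem needs only connectedness and $\dim M\geq 2$, not simple connectivity.
\item Your ``by connectedness'' should be spelled out. Two nondegenerate hyperquadrics sharing an open piece of hypersurface coincide, so the set of points near which $f(M)$ lies in a fixed quadric is open and closed. The same patching is needed in your $H=0$ case, where $\nabla$-affine coordinates exist only locally.
\end{itemize}

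Finally, your argument uses the Gauss and Codazzi equations, and hence derivatives of $f$ well beyond third order. This is exactly the limitation the paper points out. It is also why the paper relies on the $C^{2,1}$ result \thref{thm:08}, rather than this theorem, for boundaries that are only $C^3$.
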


\noindent Actually, the following generalization is known:

\begin{theorem}[{\cite[Corollary~5.4]{Zawalski2025}, see the revised version \cite[Corollary~5.7]{Zawalski_2024}}]\thlabel{thm:08}
Let $M\subset\mathbb R^n$, $n\geq 3$, be a convex hypersurface of class $C^{2,1}$ such that for almost every $p\in M$ there is a quadratic hypersurface having third-order contact with $M$ at $p$. Then $M$ is itself a quadratic hypersurface.
\end{theorem}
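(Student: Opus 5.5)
The plan is to reduce the statement to the Maschke--Pick--Berwald \thref{thm:03} by a regularity bootstrap. The hypothesis is only available almost everywhere, and the surface is only $C^{2,1}$, so the task is to show that the hypothesis forces $M$ to be smooth enough for the cubic form \eqref{eq:15} to be defined and to vanish everywhere.

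\textbf{Step 1: translate to a PDE for the graph function.} Work locally and write $M$ near a point as a graph $x_n=u(x')$ with $x'\in U\subset\mathbb R^{n-1}$ and $u\in C^{2,1}(U)$. By Rademacher's theorem applied to $D^2u$, the third derivatives $D^3u$ exist almost everywhere and lie in $L^\infty$.

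First I would deal with degenerate points. At a point where a quadric has third-order contact with $M$, that quadric can be taken non-degenerate along $T_pM$. I would argue that the set where $D^2u$ is singular cannot carry the condition almost everywhere unless the surface is flat there, which contradicts $M$ being a convex hypersurface containing non-flat pieces. This should be settled in a separate, somewhat ad hoc lemma. From then on, assume $D^2u>0$ on $U$.

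Now compute the third-order contact condition. For the graph of a quadric, differentiating the implicit quadratic equation gives
\[
D^3u=\operatorname{sym}\bigl(D^2u\otimes\alpha\bigr)
\]
for some $1$-form $\alpha$, namely a multiple of the gradient of the linear factor coming from the equation. So the hypothesis becomes the statement that this identity holds almost everywhere on $U$ for some $\alpha$. Taking the trace against $(D^2u)^{-1}$ recovers $\alpha$ algebraically:
\[
\alpha=c_n\,(D^2u)^{-1}\!:\!D^3u = c_n\, D\log\det D^2u .
\]
Hence $\alpha\in L^\infty$, and the equation holds in the sense of distributions, i.e.\ in $W^{1,\infty}$.

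\textbf{Step 2: bootstrap regularity via compatibility.} The equation expresses $\partial_k u_{ij}$ through $D^2u$ and $\alpha$. I would impose the distributional commutation $\partial_l\partial_k u_{ij}=\partial_k\partial_l u_{ij}$. Because $n-1\ge 2$, this yields an expression for $D\alpha$ in terms of $D^2u$, $\alpha$, and a quadratic term in $\alpha$, exactly as in the classical derivation that this system is of \emph{finite type}.

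This is the step I expect to be the main obstacle. The difficulty is justifying the Codazzi-type manipulation when $\alpha$ is only $L^\infty$ and is a priori only a distributional gradient. What I would try first is the following.
\begin{enumerate*}[label=(\roman*)]
\item Use $\alpha=c_n D\log\det D^2u$ to see that $\alpha$ is a gradient, so it is closed.
\item Mollify $u$, write the commutator identity for the mollified functions, and pass to the limit. The error terms are products of a $W^{1,\infty}$ quantity and an $L^\infty$ quantity, and these converge in $\mathcal D'$.
\end{enumerate*}
The outcome should be $\alpha\in W^{1,\infty}$, hence $u\in C^{3,1}$. Iterating, with the finite-type system closing on $(D^2u,\alpha)$, gives $u\in C^\infty$ and in fact real-analytic.

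\textbf{Step 3: conclude.} Once $u$ is smooth, the hypothesis holds at every point by continuity. The condition in Step 1 is precisely the vanishing of the traceless (apolar) part of $D^3u$ relative to $D^2u$, and this is the vanishing of the Fubini--Pick cubic form $C$ from \eqref{eq:15}. So \thref{thm:03} applies on each local patch, and each patch is a piece of a hyperquadric.

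To globalize, I would use the fact that a solution of a finite-type system is determined by its finite jet at a single point. Two overlapping patches therefore lie on the same quadric. Since $M$ is connected, and the degenerate set has been handled in Step 1, $M$ is contained in a single quadratic hypersurface. Convexity together with closedness then forces $M$ to coincide with it.
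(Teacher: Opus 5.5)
The paper does not prove this statement; it imports it from \cite{Zawalski2025}, and the remark after it says the point there is to avoid the smooth Maschke--Pick--Berwald theorem. Your plan therefore has to stand on its own. Step~1 is fine where $D^2u>0$.

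\textbf{Step~2 fails, and the reason is structural rather than technical.} The commutation $\partial_l\partial_k u_{ij}=\partial_k\partial_l u_{ij}$ does not determine $D\alpha$ from $(D^2u,\alpha)$. It determines only the part of $D\alpha$ that is trace-free with respect to $D^2u$.

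A concrete example shows this. The paraboloid $u=\tfrac12|x'|^2$ and the unit sphere $u=1-(1-|x'|^2)^{1/2}$ both solve your equation. They have the same $3$-jet at $0$: $D^2u=I$, $D^3u=0$, $\alpha=0$. Yet $\alpha\equiv 0$ for the paraboloid, while $D\alpha(0)$ is a nonzero multiple of $I$ for the sphere.

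The underlying reason is that quadrics with a prescribed $3$-jet form a one-parameter family; compare $x_n=\tfrac12|x'|^2$ with $x_n=\tfrac12|x'|^2+\tfrac12x_n^2$. So the system closes only after a \emph{second} prolongation. With $\psi\colonequals(\det D^2u)^{-1/(n+1)}$, your equation reads $\psi D^3u+3\operatorname{sym}(D^2u\otimes D\psi)=0$, where $\operatorname{sym}$ averages over the three slots. Your commutation argument can be made rigorous in $\mathcal D'$, since every product is Lipschitz times $L^\infty$ or Lipschitz times a derivative of $L^\infty$. But it yields only
$D^2\psi=\lambda D^2u$ with $\lambda=\tfrac1{n-1}\operatorname{tr}\bigl((D^2u)^{-1}D^2\psi\bigr)$.

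To show $\lambda$ is constant you must differentiate once more. Then the term $\lambda D^3u$ appears, where $\lambda$ is a priori only a distribution of order one and $D^3u$ is merely $L^\infty$. That product is undefined, and mollification does not rescue it. So neither $\alpha\in W^{1,\infty}$ nor the iteration follows.

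What is missing is a genuine regularity input for this scalar. One option is elliptic regularity for the overdetermined elliptic operator $\psi\mapsto D^2\psi-\tfrac1{n-1}\operatorname{tr}\bigl((D^2u)^{-1}D^2\psi\bigr)D^2u$. Its symbol is injective precisely because $n-1\geq 2$, and this would place $\lambda$ in some $L^p$. After that, the second commutation is legitimate and gives $D\lambda=0$. Then $\psi-\lambda u$ is affine, and the bootstrap to $C^\infty$ is immediate.

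\textbf{Your treatment of points where $D^2u$ is singular is also wrong as stated.} A circular cylinder, or one nappe of a circular cone away from its vertex, is a non-flat convex quadric. Its second fundamental form is singular everywhere, and it satisfies the hypothesis at every point. So no lemma of the form ``singular $D^2u$ a.e.\ on an open set forces flatness'' can hold.

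Replace it with the open--closed argument the paper uses in the proof of \thref{thm:01}:
\begin{enumerate*}[label=(\roman*)]
\item on a closed convex hypersurface, take a point of maximal distance from the origin, where the second fundamental form is positive definite;
\item apply the local result on the maximal connected open set $U$ on which the form stays positive definite;
\item at any $p\in\partial U$, continuity of second derivatives makes the second fundamental form of $M$ equal to that of the quadric containing $U$, which is locally strongly convex, a contradiction.
\end{enumerate*}
This positive-definite patch is also the only setting in which the paper actually invokes the statement.
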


\noindent Importantly, it requires only that the hypersurface be of class $C^{2,1}$, whereas the theorem of Maschke-Pick-Berwald, as formulated in \cite[Theorem~II.4.5]{nomizu1994affine}, requires the hypersurface to be smooth, and its proof indeed relies on higher-order tensors.

%%%%%%%%%%%%%%%%%%%%%%%%%%%%%%%%%%%%%%%%%%%%%%%%%%%%%%%%%%%%%%%%%%%%%%
\section{Infinitesimal properties of flat shadow boundaries}
%%%%%%%%%%%%%%%%%%%%%%%%%%%%%%%%%%%%%%%%%%%%%%%%%%%%%%%%%%%%%%%%%%%%%%

Denote the submanifold $\partial K$ by $M^{n-1}$. Let $p\in M^{n-1}$ be any point of $M^{n-1}$ with positive definite second fundamental form. After applying a suitable affine map we may assume that $p=\mathbf 0_{\mathbb R^n}$ and $T_pM^{n-1}=\mathbf 0_{\mathbb R^n}+\langle\hat{\boldsymbol e}_n\rangle^\perp$, where $\hat{\boldsymbol e}_{n}$ stands for the $n$\textsuperscript{th} standard unit vector (fig.~\ref{fig:01}). In this coordinate system, we may represent the neighborhood of $p$ in $M^{n-1}$ as a graph of some function $f:T_pM^{n-1}\supset U\to\mathbb R$ of class $C^3$, which must be of the form
$$f(\boldsymbol x)=O(\|\boldsymbol x\|)^2$$
in Big $O$ notation. Since we assumed that the second fundamental form of $M^{n-1}$ is positive definite at $p$, after applying a suitable linear change of coordinates in the domain, we may further assume that
$$f(\boldsymbol x)=\frac{1}{2}\langle\boldsymbol x,\boldsymbol x\rangle+O(\|\boldsymbol x\|)^3.$$
The above will be called a \emph{canonical parametrization} of $M^{n-1}$ at $p$.

\begin{remark}
Even though the second-order jet of $f$ is already fixed, we can still apply a linear transformation
$$\begin{pmatrix}\mathbf I_{n-1}&\boldsymbol\eta\\\mathbf 0_{n-1}^\top&1\end{pmatrix}\in\SL(\mathbb R^n),$$
which effectively adds a cubic form
\begin{equation}\label{eq:11}-\frac{1}{2}\langle\boldsymbol x,\boldsymbol x\rangle\langle\boldsymbol x,\boldsymbol\eta\rangle \end{equation}
to the third-order jet of $f$. However, we will not do this now, but rather choose a suitable transformation later. After that, the canonical parametrization will be unique up to an orthogonal change of coordinates in the domain.
\end{remark}

\begin{figure}
\includegraphics{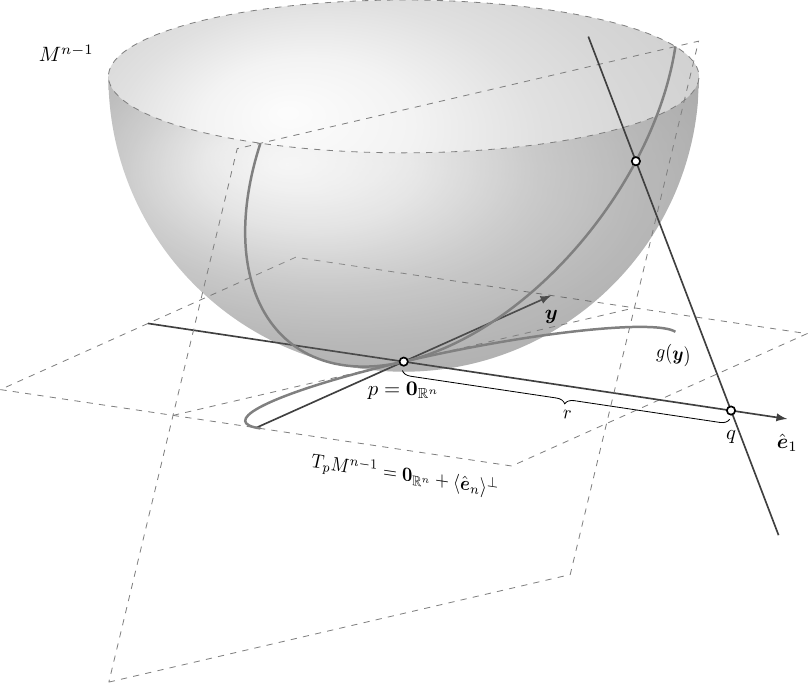}
\caption{Notations used in the proof}
\label{fig:01}
\end{figure}

\subsection{Local parametrization of a shadow boundary}

Let $q$ be any point light source that lies on $T_pM^{n-1}=\langle\hat{\boldsymbol e}_n\rangle^\perp$. After applying a suitable orthogonal change of coordinates, we may further assume that $q=r\hat{\boldsymbol e}_1$ for some $r>0$.

\begin{claim}\thlabel{lem:01}
The shadow boundary is transversal to the supporting line $pq=\langle\hat{\boldsymbol e}_1\rangle$. Indeed, otherwise the hyperplane spanned by the shadow boundary would contain $q$, whence almost every line passing through $q$ and intersecting the shadow boundary would intersect the interior of $K$, a contradiction.
\end{claim}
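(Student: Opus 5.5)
The claim asserts that near $p$ the flat shadow boundary $\Sigma_q$ is a hypersurface of the hyperplane $H_q$ containing it, and that $H_q$ is transversal to the line $pq=\langle\hat{\boldsymbol e}_1\rangle$, i.e. $\hat{\boldsymbol e}_1\notin\vec H_q$. I will argue by contradiction, following the sketch in the statement: assume $H_q\ni q$ and produce a line through $q$ that meets the shadow boundary and also enters $\Int K$. That contradicts the definition of the shadow boundary as the set of points of $\partial K$ lying on supporting lines through $q$.

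First I fix what $\Sigma_q$ is: the set of $x\in\partial K$ such that the line $qx$ supports $K$, equivalently $q\in T_x\partial K$ (the boundary is $C^3$, hence $C^1$). Note $p\in\Sigma_q$, since $q\in T_pM^{n-1}$. Since $\Sigma_q$ is flat, $\Sigma_q\subseteq H_q$ for a hyperplane $H_q$ through $p$. In the canonical parametrization the graph is locally strictly convex near $p$. So the tangency condition $\langle\nabla f(\boldsymbol x),\boldsymbol q-\boldsymbol x\rangle+f(\boldsymbol x)=0$ has nonzero derivative at $\boldsymbol x=0$, namely $\boldsymbol v\mapsto\langle\boldsymbol v,\boldsymbol q\rangle=r v_1$. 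Thus $\Sigma_q$ is locally the graph over a $C^2$ hypersurface of $T_pM^{n-1}$ whose tangent at $0$ is $\langle\hat{\boldsymbol e}_1\rangle^\perp\cap\langle\hat{\boldsymbol e}_n\rangle^\perp$. Hence $\Sigma_q$ is an $(n-2)$-dimensional manifold near $p$. Its affine span is then pinned down: $H_q$ must contain $T_p\Sigma_q=\langle\hat{\boldsymbol e}_1,\hat{\boldsymbol e}_n\rangle^\perp$ plus the directions in which $\Sigma_q$ bends. Since the cone over $\Sigma_q$ is curved, $\Sigma_q$ is not contained in an $(n-3)$-flat, and $H_q$ is uniquely determined.

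For the contradiction, suppose $q\in H_q$. Consider the lines through $q$ meeting $\Sigma_q$. All of them lie in $H_q$, and they sweep out the cone $C_q=\bigcup_{x\in\Sigma_q}qx\subset H_q$, which is the boundary of the supporting cone of $K$ with apex $q$. By the dimension count above, this $(n-1)$-dimensional cone would then have to lie inside the $(n-1)$-dimensional hyperplane $H_q$. A neighbourhood of the ray $qp$ in the supporting cone would therefore be flat, so the supporting cone would locally be a half-space bounded by $H_q$. But then $H_q$ itself supports $K$ near $p$, so $H_q=T_pM^{n-1}$. That is impossible, because $\Sigma_q\subset T_pM^{n-1}\cap\partial K=\{p\}$ near $p$ by strict convexity, which contradicts $\Sigma_q$ being $(n-2)$-dimensional.

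Alternatively, following the paper's phrasing: if $q\in H_q\neq T_pM^{n-1}$, then $H_q$ cuts $\Int K$. For almost every $x\in\Sigma_q$, the line $qx$ lies in $H_q$ and crosses the relative interior of the convex section $K\cap H_q$, since $x$ is in the relative boundary and $q$ is outside that section. Such a line meets $\Int K$, contradicting that $qx$ supports $K$.

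The main obstacle is justifying that $\Sigma_q$ genuinely spans only one hyperplane, with positive $(n-2)$-dimensional extent near $p$, and handling "almost every" rigorously. I would rely on the implicit function computation above, which uses only positive definiteness of the second fundamental form at $p$, to secure this.
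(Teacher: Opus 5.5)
Your proof is correct, but its core differs from the paper's. The paper argues synthetically: if $q\in H_q$, then the lines from $q$ through points of the shadow boundary lie in $H_q$, and almost all of them cross the relative interior of the section $K\cap H_q$, hence meet the interior of $K$.

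You instead apply the implicit function theorem to $\Phi(\boldsymbol x)=\langle\nabla f(\boldsymbol x),\boldsymbol q-\boldsymbol x\rangle+f(\boldsymbol x)$, whose differential at $\mathbf 0$ is $\boldsymbol v\mapsto rv_1$. This shows that near $p$ the shadow boundary $\Sigma_q$ is a $C^2$ submanifold of dimension $n-2$ with $T_p\Sigma_q=\langle\hat{\boldsymbol e}_1,\hat{\boldsymbol e}_n\rangle^\perp$. You then reach a contradiction by showing that $q\in H_q$ forces $H_q=T_pM^{n-1}$, which strict convexity forbids.

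Your route buys rigor that the paper's one-liner takes for granted. It establishes the $(n-2)$-dimensionality of $\Sigma_q$, and it excludes the case $H_q=T_pM^{n-1}$, to which the ``interior of $K$'' argument does not apply. It also yields, with no flatness assumption at all, the graph representation $g$ over $\langle\hat{\boldsymbol e}_1,\hat{\boldsymbol e}_n\rangle^\perp$ that the paper extracts from the claim. Your nondegeneracy $D\Phi(\mathbf 0)\neq 0$ is exactly the paper's later remark that the denominator $f_{11}(g-r)+Df_1[\boldsymbol y]$ equals $-r\neq 0$ at $\mathbf 0$.

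The paper's route is shorter and coordinate-free. However, its ``almost every'' silently requires $\partial K\cap H_q$ to be strictly convex inside $H_q$ near $p$.

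A few streamlining remarks:
\begin{itemize}
\item The detour through the supporting cone is unnecessary. Once $\langle\hat{\boldsymbol e}_1,\hat{\boldsymbol e}_n\rangle^\perp\subseteq\vec H_q$, the assumption $q\in H_q$ adds the direction $p-q\parallel\hat{\boldsymbol e}_1$. Linear algebra then gives $\vec H_q=\langle\hat{\boldsymbol e}_n\rangle^\perp$, i.e.\ $H_q=T_pM^{n-1}$. If you keep the cone argument, its $(n-1)$-dimensionality needs $p-q\notin T_p\Sigma_q$, which is again your implicit-function output.
\item For uniqueness of $H_q$, the relevant non-containment is in an $(n-2)$-flat, not an $(n-3)$-flat. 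It holds because such a flat would be $p+T_p\Sigma_q\subset T_pM^{n-1}$, but uniqueness is not needed anyway.
\item In your ``alternative'' paragraph, $x$ lying on the relative boundary with $q$ outside the section does not by itself make $qx$ cross the relative interior. A flat piece of the relative boundary whose affine hull contains $q$ would be a counterexample. You need that $\partial K\cap H_q$ has positive definite second fundamental form in $H_q$. This follows from the transversality of $H_q$ to $T_pM^{n-1}$ together with positive definiteness at $p$.
\end{itemize}
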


It follows that also the projection of the shadow boundary onto the tangent hyperplane $T_pM^{n-1}=\langle\hat{\boldsymbol e}_n\rangle^\perp$ is transversal to the supporting line $\langle\hat{\boldsymbol e}_1\rangle$. Hence, we may represent the neighbourhood of $p$ in this projection as a graph of some function $g:\langle\hat{\boldsymbol e}_1,\hat{\boldsymbol e}_n\rangle^\perp\supset V\to\mathbb R$ of class $C^3$. In this parametrization, points on the shadow boundary are of the form $(g(\boldsymbol y),\boldsymbol y,f(g(\boldsymbol y),\boldsymbol y))$ for $\boldsymbol y\in V$. A (not necessarily unit) normal vector to $M^{n-1}$ at such a point is of the form $((\nabla f)(g(\boldsymbol y),\boldsymbol y),-1)$. Thus, the definition of the shadow boundary reads
$$((\nabla_1f)(g(\boldsymbol y),\boldsymbol y),(\nabla_{-1}f)(g(\boldsymbol y),\boldsymbol y),-1)\perp(g(\boldsymbol y)-r,\boldsymbol y,f(g(\boldsymbol y),\boldsymbol y)),$$
which is equivalent to
\begin{equation}\label{eq:01}f_1(g(\boldsymbol y),\boldsymbol y)(g(\boldsymbol y)-r)+(Df)(g(\boldsymbol y),\boldsymbol y)[\boldsymbol y]-f(g(\boldsymbol y),\boldsymbol y)=0,\end{equation}
where $D$ is the directional derivative operator on $\langle\hat{\boldsymbol e}_1,\hat{\boldsymbol e}_n\rangle^\perp$. From now on, for brevity, we will omit the arguments of functions whenever they are clear from the context. Differentiating \eqref{eq:01} with respect to $\boldsymbol y$ yields
$$f_{11}Dg[\boldsymbol a](g-r)+Df_1[\boldsymbol a](g-r)+\cancel{f_1Dg[\boldsymbol a]}+Df_1[\boldsymbol y]Dg[\boldsymbol a]+D^2f[\boldsymbol y,\boldsymbol a]+\cancel{Df[\boldsymbol a]}-\cancel{f_1Dg[\boldsymbol a]}-\cancel{Df[\boldsymbol a]}=0.$$
Hence
$$Dg[\boldsymbol a]=(-Df_1[\boldsymbol a] (g-r) -D^2f[\boldsymbol y,\boldsymbol a]) (f_{11} (g-r) +Df_1[\boldsymbol y])^{-1}.$$
Note that the denominator does not vanish in some neighbourhood of $\boldsymbol y=0$. Differentiating the above equation with respect to $\boldsymbol y$ yields
\begin{align*}
&D^2g[\boldsymbol a,\boldsymbol b]=\big(-(Df_{11}[\boldsymbol a] Dg[\boldsymbol b] +D^2f_1[\boldsymbol a,\boldsymbol b]) (g-r) -Df_1[\boldsymbol a] Dg[\boldsymbol b] -D^2f_1[\boldsymbol y,\boldsymbol a] Dg[\boldsymbol b] -D^3f[\boldsymbol y,\boldsymbol a,\boldsymbol b]\\
&\quad -D^2f[\boldsymbol b,\boldsymbol a]\big) (f_{11} (g-r) +Df_1[\boldsymbol y])^{-1} -(-Df_1[\boldsymbol a] (g-r) -D^2f[\boldsymbol y,\boldsymbol a]) \big((f_{111} Dg[\boldsymbol b] +Df_{11}[\boldsymbol b]) (g-r)\\
&\quad +f_{11} Dg[\boldsymbol b] +Df_{11}[\boldsymbol y] Dg[\boldsymbol b] +D^2f_1[\boldsymbol y,\boldsymbol b] +Df_1[\boldsymbol b]\big) (f_{11} (g-r) +Df_1[\boldsymbol y])^{-2}.
\end{align*}
Again, differentiating the above equation with respect to $\boldsymbol y$ yields
\begin{align*}
&D^3g[\boldsymbol a,\boldsymbol b,\boldsymbol c]=\big(-((Df_{111}[\boldsymbol a] Dg[\boldsymbol c] +D^2f_{11}[\boldsymbol a,\boldsymbol c]) Dg[\boldsymbol b] +Df_{11}[\boldsymbol a] D^2g[\boldsymbol b,\boldsymbol c] +D^2f_{11}[\boldsymbol a,\boldsymbol b] Dg[\boldsymbol c]\\
&\quad +D^3f_1[\boldsymbol a,\boldsymbol b,\boldsymbol c]) (g-r) -(Df_{11}[\boldsymbol a] Dg[\boldsymbol b] +D^2f_1[\boldsymbol a,\boldsymbol b]) Dg[\boldsymbol c] -(Df_{11}[\boldsymbol a] Dg[\boldsymbol c] +D^2f_1[\boldsymbol a,\boldsymbol c]) Dg[\boldsymbol b]\\
&\quad -Df_1[\boldsymbol a] D^2g[\boldsymbol b,\boldsymbol c] -(D^2f_{11}[\boldsymbol y,\boldsymbol a] Dg[\boldsymbol c] +D^3f_1[\boldsymbol y,\boldsymbol a,\boldsymbol c] +D^2f_1[\boldsymbol c,\boldsymbol a]) Dg[\boldsymbol b] -D^2f_1[\boldsymbol y,\boldsymbol a] D^2g[\boldsymbol b,\boldsymbol c]\\
&\quad -D^3f_1[\boldsymbol y,\boldsymbol a,\boldsymbol b] Dg[\boldsymbol c] -D^4f[\boldsymbol y,\boldsymbol a,\boldsymbol b,\boldsymbol c] -D^3f[\boldsymbol c,\boldsymbol a,\boldsymbol b] -D^2f_1[\boldsymbol b,\boldsymbol a] Dg[\boldsymbol c] -D^3f[\boldsymbol b,\boldsymbol a,\boldsymbol c]\big) (f_{11} (g-r)\\
&\quad +Df_1[\boldsymbol y])^{-1} -(-(Df_{11}[\boldsymbol a] Dg[\boldsymbol b] +D^2f_1[\boldsymbol a,\boldsymbol b]) (g-r) -Df_1[\boldsymbol a] Dg[\boldsymbol b] -D^2f_1[\boldsymbol y,\boldsymbol a] Dg[\boldsymbol b] -D^3f[\boldsymbol y,\boldsymbol a,\boldsymbol b]\\
&\quad -D^2f[\boldsymbol b,\boldsymbol a]) ((f_{111} Dg[\boldsymbol c] +Df_{11}[\boldsymbol c]) (g-r) +f_{11} Dg[\boldsymbol c] +Df_{11}[\boldsymbol y] Dg[\boldsymbol c] +D^2f_1[\boldsymbol y,\boldsymbol c] +Df_1[\boldsymbol c])\\
&\quad (f_{11} (g-r) +Df_1[\boldsymbol y])^{-2} -(-(Df_{11}[\boldsymbol a] Dg[\boldsymbol c] +D^2f_1[\boldsymbol a,\boldsymbol c]) (g-r) -Df_1[\boldsymbol a] Dg[\boldsymbol c] -D^2f_1[\boldsymbol y,\boldsymbol a] Dg[\boldsymbol c]\\
&\quad -D^3f[\boldsymbol y,\boldsymbol a,\boldsymbol c] -D^2f[\boldsymbol c,\boldsymbol a]) ((f_{111} Dg[\boldsymbol b] +Df_{11}[\boldsymbol b]) (g-r) +f_{11} Dg[\boldsymbol b] +Df_{11}[\boldsymbol y] Dg[\boldsymbol b] +D^2f_1[\boldsymbol y,\boldsymbol b]\\
&\quad +Df_1[\boldsymbol b]) (f_{11} (g-r) +Df_1[\boldsymbol y])^{-2} -(-Df_1[\boldsymbol a] (g-r) -D^2f[\boldsymbol y,\boldsymbol a]) \big(\big(((f_{1111} Dg[\boldsymbol c] +Df_{111}[\boldsymbol c]) Dg[\boldsymbol b]\\
&\quad +f_{111} D^2g[\boldsymbol b,\boldsymbol c] +Df_{111}[\boldsymbol b] Dg[\boldsymbol c] +D^2f_{11}[\boldsymbol b,\boldsymbol c]) (g-r) +(f_{111} Dg[\boldsymbol b] +Df_{11}[\boldsymbol b]) Dg[\boldsymbol c] +(f_{111} Dg[\boldsymbol c]\\
&\quad +Df_{11}[\boldsymbol c]) Dg[\boldsymbol b] +f_{11} D^2g[\boldsymbol b,\boldsymbol c] +(Df_{111}[\boldsymbol y] Dg[\boldsymbol c] +D^2f_{11}[\boldsymbol y,\boldsymbol c] +Df_{11}[\boldsymbol c]) Dg[\boldsymbol b] +Df_{11}[\boldsymbol y] D^2g[\boldsymbol b,\boldsymbol c]\\
&\quad +D^2f_{11}[\boldsymbol y,\boldsymbol b] Dg[\boldsymbol c] +D^3f_1[\boldsymbol y,\boldsymbol b,\boldsymbol c] +D^2f_1[\boldsymbol c,\boldsymbol b] +Df_{11}[\boldsymbol b] Dg[\boldsymbol c] +D^2f_1[\boldsymbol b,\boldsymbol c]\big) (f_{11} (g-r) +Df_1[\boldsymbol y])^{-2}\\
&\quad -2 ((f_{111} Dg[\boldsymbol b] +Df_{11}[\boldsymbol b]) (g-r) +f_{11} Dg[\boldsymbol b] +Df_{11}[\boldsymbol y] Dg[\boldsymbol b] +D^2f_1[\boldsymbol y,\boldsymbol b] +Df_1[\boldsymbol b]) ((f_{111} Dg[\boldsymbol c]\\
&\quad +Df_{11}[\boldsymbol c]) (g-r) +f_{11} Dg[\boldsymbol c] +Df_{11}[\boldsymbol y] Dg[\boldsymbol c] +D^2f_1[\boldsymbol y,\boldsymbol c] +Df_1[\boldsymbol c]) (f_{11} (g-r) +Df_1[\boldsymbol y])^{-3}\big).
\end{align*}
Finally, recall that
\begin{equation}\label{eq:05}g(\mathbf 0)=0,\ f_1(\mathbf 0)=0,\ Df(\mathbf 0)[\boldsymbol a]=0,\ f_{11}(\mathbf 0)=1,\ Df_1(\mathbf 0)[\boldsymbol a]=0,\ D^2f(\mathbf 0)[\boldsymbol a,\boldsymbol b]=\langle\boldsymbol a,\boldsymbol b\rangle.\end{equation}
Evaluating the above equations at $\boldsymbol y=0$ gives us
\begin{equation}\label{eq:02}\begin{aligned}
Dg(\mathbf 0)[\boldsymbol a]={}&0,\\
D^2g(\mathbf 0)[\boldsymbol a,\boldsymbol b]={}&-D^2f_1[\boldsymbol a,\boldsymbol b] +r^{-1} \langle\boldsymbol a,\boldsymbol b\rangle,\\
D^3g(\mathbf 0)[\boldsymbol a,\boldsymbol b,\boldsymbol c]={}&-D^3f_1[\boldsymbol a,\boldsymbol b,\boldsymbol c] +2 r^{-1} D^3f[\boldsymbol a,\boldsymbol b,\boldsymbol c] +(D^2f_1[\boldsymbol a,\boldsymbol b] -r^{-1} \langle\boldsymbol a,\boldsymbol b\rangle) Df_{11}[\boldsymbol c]\\
&\quad +(D^2f_1[\boldsymbol b,\boldsymbol c] -r^{-1} \langle\boldsymbol b,\boldsymbol c\rangle) Df_{11}[\boldsymbol a] +(D^2f_1[\boldsymbol c,\boldsymbol a] -r^{-1} \langle\boldsymbol c,\boldsymbol a\rangle) Df_{11}[\boldsymbol b].
\end{aligned}\end{equation}
In particular, \emph{a posteriori} we may conclude that the shadow boundary is not only transversal to $\langle\hat{\boldsymbol e}_1\rangle$, but even orthogonal.

\begin{remark}
The same argument yields further
\begin{align*}
&D^4g(\mathbf 0)[\boldsymbol a,\boldsymbol b,\boldsymbol c,\boldsymbol d]=-D^4f_1[\boldsymbol a,\boldsymbol b,\boldsymbol c,\boldsymbol d] +3 r^{-1} D^4f[\boldsymbol a,\boldsymbol b,\boldsymbol c,\boldsymbol d] +\sum D^3f_1[\boldsymbol a,\boldsymbol b,\boldsymbol c] Df_{11}[\boldsymbol d]\\
&\quad +\sum D^2f_{11}[\boldsymbol a,\boldsymbol b] (D^2f_1[\boldsymbol c,\boldsymbol d] -r^{-1} \langle\boldsymbol c,\boldsymbol d\rangle) -2 r^{-1} \sum D^3f[\boldsymbol a,\boldsymbol b,\boldsymbol c] Df_{11}[\boldsymbol d]\\
&\quad -(f_{111} +3 r^{-1}) \sum D^2f_1[\boldsymbol a,\boldsymbol b] D^2f_1[\boldsymbol c,\boldsymbol d] -2 \sum (D^2f_1[\boldsymbol a,\boldsymbol b] -r^{-1} \langle\boldsymbol a,\boldsymbol b\rangle) Df_{11}[\boldsymbol c] Df_{11}[\boldsymbol d]\\
&\quad +r^{-1} (f_{111} +r^{-1}) \sum (D^2f_1[\boldsymbol a,\boldsymbol b] -r^{-1} \langle\boldsymbol a,\boldsymbol b\rangle) \langle\boldsymbol c,\boldsymbol d\rangle +2 r^{-3} \sum \langle\boldsymbol a,\boldsymbol b\rangle \langle\boldsymbol c,\boldsymbol d\rangle,
\end{align*}
where each sum runs over all the permutations of $\boldsymbol a,\boldsymbol b,\boldsymbol c,\boldsymbol d$, and is normalized by the order of the stabilizer subgroup of its corresponding term. For brevity, we will skip the calculations.
\end{remark}

\subsection{Local parametrization of a section}

Let $H\in\Gr_{n-1}(\mathbb R^n)$ be any hyperplane. After applying a suitable orthogonal change of coordinates, we may further assume that $H=(-\hat{\boldsymbol e}_1+m\hat{\boldsymbol e}_n)^\perp$ for some $m>0$. The projection of the section onto the tangent hyperplane $T_pM^{n-1}=\langle\hat{\boldsymbol e}_n\rangle^\perp$ is orthogonal to the supporting line $\langle\hat{\boldsymbol e}_1\rangle$. Hence, we may represent the neighbourhood of $p$ in this projection as a graph of some function $g:\langle\hat{\boldsymbol e}_1,\hat{\boldsymbol e}_n\rangle^\perp\supset V\to\mathbb R$ of class $C^3$. In this parametrization, points on the section are of the form $(g(\boldsymbol y),\boldsymbol y,f(g(\boldsymbol y),\boldsymbol y))$ for $\boldsymbol y\in V$. Thus, the definition of the section reads
$$(-1,\mathbf 0,m)\perp(g(\boldsymbol y),\boldsymbol y,f(g(\boldsymbol y),\boldsymbol y)),$$
which is equivalent to
\begin{equation}\label{eq:03}-g(\boldsymbol y)+mf(g(\boldsymbol y),\boldsymbol y)=0.\end{equation}
From now on, for brevity, we will omit the arguments of functions whenever they are clear from the context. Differentiating \eqref{eq:03} with respect to $\boldsymbol y$ yields
$$-Dg[\boldsymbol a]+mf_1Dg[\boldsymbol a]+mDf[\boldsymbol a]=0.$$
Hence
$$Dg[\boldsymbol a]=m Df[\boldsymbol a] (1 -m f_1)^{-1}.$$
Note that the denominator does not vanish in some neighbourhood of $\boldsymbol y=0$. Differentiating the above equation with respect to $\boldsymbol y$ yields
$$D^2g[\boldsymbol a,\boldsymbol b]=m (Df_1[\boldsymbol a] Dg[\boldsymbol b] +D^2f[\boldsymbol a,\boldsymbol b]) (1 -m f_1)^{-1} +m^2 Df[\boldsymbol a] (f_{11} Dg[\boldsymbol b] +Df_1[\boldsymbol b]) (1 -m f_1)^{-2}.$$
Again, differentiating the above equation with respect to $\boldsymbol y$ yields
\begin{align*}
&D^3g[\boldsymbol a,\boldsymbol b,\boldsymbol c]=m ((Df_{11}[\boldsymbol a] Dg[\boldsymbol c] +D^2f_1[\boldsymbol a,\boldsymbol c]) Dg[\boldsymbol b] +Df_1[\boldsymbol a] D^2g[\boldsymbol b,\boldsymbol c] +D^2f_1[\boldsymbol a,\boldsymbol b] Dg[\boldsymbol c]\\
&\quad +D^3f[\boldsymbol a,\boldsymbol b,\boldsymbol c]) (1 -m f_1)^{-1} +m^2 (Df_1[\boldsymbol a] Dg[\boldsymbol b] +D^2f[\boldsymbol a,\boldsymbol b]) (f_{11} Dg[\boldsymbol c] +Df_1[\boldsymbol c]) (1 -m f_1)^{-2}\\
&\quad +m^2 (Df_1[\boldsymbol a] Dg[\boldsymbol c] +D^2f[\boldsymbol a,\boldsymbol c]) (f_{11} Dg[\boldsymbol b] +Df_1[\boldsymbol b]) (1 -m f_1)^{-2} +m Df[\boldsymbol a] \big(m ((f_{111} Dg[\boldsymbol c]\\
&\quad +Df_{11}[\boldsymbol c]) Dg[\boldsymbol b] +f_{11} D^2g[\boldsymbol b,\boldsymbol c] +Df_{11}[\boldsymbol b] Dg[\boldsymbol c] +D^2f_1[\boldsymbol b,\boldsymbol c]) (1 -m f_1)^{-2} +2 m^2 (f_{11} Dg[\boldsymbol b] +Df_1[\boldsymbol b])\\
&\quad (f_{11} Dg[\boldsymbol c] +Df_1[\boldsymbol c]) (1 -m f_1)^{-3}\big).
\end{align*}
Finally, recall \eqref{eq:05}. Evaluating the above equations at $\boldsymbol y=0$ gives us
\begin{equation}\label{eq:04}\begin{aligned}
Dg(\mathbf 0)[\boldsymbol a]={}&0,\\
D^2g(\mathbf 0)[\boldsymbol a,\boldsymbol b]={}&m \langle\boldsymbol a,\boldsymbol b\rangle,\\
D^3g(\mathbf 0)[\boldsymbol a,\boldsymbol b,\boldsymbol c]={}&m D^3f[\boldsymbol a,\boldsymbol b,\boldsymbol c].
\end{aligned}\end{equation}

\begin{remark}
The same argument yields further
$$D^4g(\mathbf 0)[\boldsymbol a,\boldsymbol b,\boldsymbol c,\boldsymbol d]=m D^4f[\boldsymbol a,\boldsymbol b,\boldsymbol c,\boldsymbol d] +m^2 \sum (D^2f_1[\boldsymbol a,\boldsymbol b] +m \langle\boldsymbol a,\boldsymbol b\rangle) \langle\boldsymbol c,\boldsymbol d\rangle.$$
Again, we will skip the calculations.
\end{remark}

\subsection{Constraints resulting from flatness of the shadow boundary}

So far, we have considered a completely general situation. From now henceforth, assume that the shadow boundary is contained in a hyperplane $H\in\Gr_{n-1}(\mathbb R^n)$. Thus, it is equal to the section $K\cap H$. Further, because it is orthogonal to$\langle\hat{\boldsymbol e}_1\rangle$, the hyperplane $H$ must contain $\langle\hat{\boldsymbol e}_1,\hat{\boldsymbol e}_n\rangle^\perp$. Hence, we may simply equate \eqref{eq:02} and \eqref{eq:04}. This gives us
\begin{align}
\label{eq:06}D^2f_1[\boldsymbol a,\boldsymbol b]={}&(-m +r^{-1}) \langle\boldsymbol a,\boldsymbol b\rangle,\\
\label{eq:07}D^3f_1[\boldsymbol a,\boldsymbol b,\boldsymbol c]={}&(-m +2 r^{-1}) D^3f[\boldsymbol a,\boldsymbol b,\boldsymbol c] -m Df_{11}[\boldsymbol a] \langle\boldsymbol b,\boldsymbol c\rangle -m Df_{11}[\boldsymbol b] \langle\boldsymbol c,\boldsymbol a\rangle -m Df_{11}[\boldsymbol c] \langle\boldsymbol a,\boldsymbol b\rangle.
\end{align}
The same argument yields further
\begin{align}
\label{eq:08}D^4f_1[\boldsymbol a,\boldsymbol b,\boldsymbol c,\boldsymbol d]={}&(-m +3 r^{-1}) D^4f[\boldsymbol a,\boldsymbol b,\boldsymbol c,\boldsymbol d] -m \sum D^2f_{11}[\boldsymbol a,\boldsymbol b] \langle\boldsymbol c,\boldsymbol d\rangle -m \sum D^3f[\boldsymbol a,\boldsymbol b,\boldsymbol c] Df_{11}[\boldsymbol d]\\
\nonumber&\quad -((-m +r^{-1}) (-4 m +r^{-1}) r^{-1} +f_{111} (m^2 -m r^{-1} +r^{-2})) \sum \langle\boldsymbol a,\boldsymbol b\rangle \langle\boldsymbol c,\boldsymbol d\rangle.
\end{align}
Continuing, we will obtain similar expressions for $D^5f_1,D^6f_1,\ldots$, provided that the function $f$ is sufficiently smooth.\\

\begin{remark}
So far, we have considered only a single point and a single flat shadow boundary that passes through it. Assuming that this is the case for every point in some open neighbourhood, we can differentiate our equations with respect to the base point, obtaining potentially new constraints. However, the main difficulty is that the coordinate system and the parameters $m,r^{-1}$ depend on the base point. To overcome this issue, we can use the first two equations \eqref{eq:06}, \eqref{eq:07} to eliminate $m,r^{-1}$ from the third equation \eqref{eq:08}, but in this way we will obtain an equation containing derivatives of up to the sixth order. Besides, by assuming that flat shadow boundaries are created by multiple light sources from various directions, and thus that equations \eqref{eq:06}--\eqref{eq:08} are satisfied in suitably rotated coordinate systems, we will break the asymmetry and obtain that the same equations are satisfied in every direction. In other words, that every shadow boundary passing through the origin is flat, up to the terms of certain order. Hence, differentiation with respect to the base point most probably will not give us any new information beyond what we can deduce from pointwise information using standard affine differential geometry tools.
\end{remark}

%%%%%%%%%%%%%%%%%%%%%%%%%%%%%%%%%%%%%%%%%%%%%%%%%%%%%%%%%%%%%%%%%%%%%%
\section{Proofs of the main theorems}\label{sec:02}
%%%%%%%%%%%%%%%%%%%%%%%%%%%%%%%%%%%%%%%%%%%%%%%%%%%%%%%%%%%%%%%%%%%%%%

Again, let $p\in M^{n-1}$ be any point of $M^{n-1}$ with positive definite second fundamental form, and let $f$ be the canonical parametrization of $M^{n-1}$ at $p$. Suppose that there are $L$ point light sources $u_1,u_2,\ldots,u_L$ on the tangent hyperplane $T_pM^{n-1}$ in a general position with respect to $p$, that create flat shadow boundaries on $M^{n-1}$, and denote $\boldsymbol u_i\colonequals u_i-p$. Then \eqref{eq:06} reads
$$D^3f[\boldsymbol u_i,\boldsymbol a,\boldsymbol b]=\lambda_i\langle\boldsymbol a,\boldsymbol b\rangle$$
for every $\boldsymbol a,\boldsymbol b\in\langle\boldsymbol u_i\rangle^\perp$ and $i=1,2,\ldots,L$. In other words, the quadratic form
$$D^3f[\boldsymbol u_i,\boldsymbol x,\boldsymbol x]-\lambda_i\langle\boldsymbol x,\boldsymbol x\rangle$$
vanishes on $\langle\boldsymbol u_i\rangle^\perp$. Thus, it is reducible and admits a linear factor $\langle\boldsymbol x,\boldsymbol u_i\rangle$, whence it can be written as
$$2\langle\boldsymbol x,\boldsymbol u_i\rangle\langle\boldsymbol x,\boldsymbol v_i\rangle$$
for some $\boldsymbol v_i\in\mathbb R^{n-1}$. It follows that for every $i=1,2,\ldots,L$, there exists $\boldsymbol v_i\in\mathbb R^{n-1}$ such that
\begin{equation}\label{eq:16}D^3f[\boldsymbol u_i,\boldsymbol a,\boldsymbol b]=\lambda_i\langle\boldsymbol a,\boldsymbol b\rangle+\langle\boldsymbol a,\boldsymbol u_i\rangle\langle\boldsymbol b,\boldsymbol v_i\rangle+\langle\boldsymbol a,\boldsymbol v_i\rangle\langle\boldsymbol b,\boldsymbol u_i\rangle \end{equation}
for every $\boldsymbol a,\boldsymbol b\in\mathbb R^{n-1}$. Passing from covectors to vectors, we may equivalently write
$$D^3f[\boldsymbol u_i,\boldsymbol a,{}\cdot{}]^*=\lambda_i\boldsymbol a+\langle\boldsymbol a,\boldsymbol u_i\rangle\boldsymbol v_i+\langle\boldsymbol a,\boldsymbol v_i\rangle\boldsymbol u_i.$$
Applying the above equality to $\boldsymbol u_j$ yields
$$D^3f[\boldsymbol u_i,\boldsymbol u_j,{}\cdot{}]^*=\lambda_i\boldsymbol u_j+\langle\boldsymbol u_j,\boldsymbol u_i\rangle\boldsymbol v_i+\langle\boldsymbol u_j,\boldsymbol v_i\rangle\boldsymbol u_i.$$
Since the left-hand side is symmetric, so must also be the right-hand side. This gives us
$$\lambda_i\boldsymbol u_j+\langle\boldsymbol u_j,\boldsymbol u_i\rangle\boldsymbol v_i+\langle\boldsymbol u_j,\boldsymbol v_i\rangle\boldsymbol u_i=\lambda_j\boldsymbol u_i+\langle\boldsymbol u_i,\boldsymbol u_j\rangle\boldsymbol v_j+\langle\boldsymbol u_i,\boldsymbol v_j\rangle\boldsymbol u_j,$$
which can be rewritten as
$$\langle\boldsymbol u_i,\boldsymbol u_j\rangle(\boldsymbol v_i-\boldsymbol v_j)=(\lambda_j-\langle\boldsymbol u_j,\boldsymbol v_i\rangle)\boldsymbol u_i-(\lambda_i-\langle\boldsymbol u_i,\boldsymbol v_j\rangle)\boldsymbol u_j.$$
Now, if $\langle\boldsymbol u_i,\boldsymbol u_j\rangle\neq 0$, it follows that
\begin{equation}\label{eq:09}\boldsymbol v_i-\boldsymbol v_j=\frac{\lambda_j-\langle\boldsymbol u_j,\boldsymbol v_i\rangle}{\langle\boldsymbol u_j,\boldsymbol u_i\rangle}\boldsymbol u_i-\frac{\lambda_i-\langle\boldsymbol u_i,\boldsymbol v_j\rangle}{\langle\boldsymbol u_i,\boldsymbol u_j\rangle}\boldsymbol u_j.\end{equation}
Otherwise, the left-hand side vanishes, whence
\begin{equation}\label{eq:10}\langle\boldsymbol u_j,\boldsymbol v_i\rangle=\lambda_j\quad\text{and}\quad\langle\boldsymbol u_i,\boldsymbol v_j\rangle=\lambda_i.\end{equation}

Since the conditions \eqref{eq:09} and \eqref{eq:10} are of completely different nature, the rest of the argument will depend heavily on which of them holds for which pair of vectors. It will therefore be helpful to introduce the concept of an \emph{orthogonality graph}. Let $G$ be a graph on $n$ vertices, the point light sources on $T_pM^{n-1}$, connected by an edge if and only if $\boldsymbol u_i\not\perp\boldsymbol u_j$. In the following lemma, we consider the generic case, when the vectors are pairwise non-orthogonal:

\begin{lemma}\thlabel{lem:02}
Let $n\geq 4$ and suppose that $G$ contains a complete graph $K_{n+1}$. Then, in a suitable coordinate system, $D^3f=\mathbf 0$.
\end{lemma}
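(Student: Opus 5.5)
The plan is to show that $D^3f$ is a pure \enquote{trace} cubic, i.e.\ that there is $\boldsymbol w\in\mathbb R^{n-1}$ with
$$D^3f[\boldsymbol a,\boldsymbol b,\boldsymbol c]=\langle\boldsymbol a,\boldsymbol b\rangle\langle\boldsymbol c,\boldsymbol w\rangle+\langle\boldsymbol b,\boldsymbol c\rangle\langle\boldsymbol a,\boldsymbol w\rangle+\langle\boldsymbol c,\boldsymbol a\rangle\langle\boldsymbol b,\boldsymbol w\rangle .$$
Such a cubic is exactly what the shear of the Remark following the canonical parametrization adds, namely \eqref{eq:11} with $\boldsymbol\eta=2\boldsymbol w$. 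Applying that shear then yields $D^3f=\mathbf 0$ in the new coordinates. Set $d=n-1\geq 3$ and take $n+1=d+2$ pairwise non-orthogonal vectors $\boldsymbol u_1,\ldots,\boldsymbol u_{d+2}$ forming the clique. By general linear position, any $d$ of them are linearly independent; in particular any three are independent, since $d\geq 3$. This is where $n\geq 4$ enters.

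\textbf{Step 1: a common vector.} By \eqref{eq:09}, for all $i\neq j$ we can write $\boldsymbol v_i-\boldsymbol v_j=\alpha_{ij}\boldsymbol u_i-\alpha_{ji}\boldsymbol u_j$. For a triple $i,j,k$, sum the three differences $\boldsymbol v_i-\boldsymbol v_j$, $\boldsymbol v_j-\boldsymbol v_k$, $\boldsymbol v_k-\boldsymbol v_i$ to get zero. Linear independence of $\boldsymbol u_i,\boldsymbol u_j,\boldsymbol u_k$ then forces $\alpha_{ij}=\alpha_{ik}$. Hence $\alpha_{ij}=\alpha_i$ does not depend on $j$, and $\boldsymbol w\colonequals\boldsymbol v_i-\alpha_i\boldsymbol u_i$ is the same for all $i$.

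\textbf{Step 2: subtract the trace part.} Let $C'$ be $D^3f$ minus the trace cubic built from $\boldsymbol w$. Substituting $\boldsymbol v_i=\boldsymbol w+\alpha_i\boldsymbol u_i$ into \eqref{eq:16} gives
$$C'[\boldsymbol u_i,\boldsymbol a,\boldsymbol b]=\mu_i\langle\boldsymbol a,\boldsymbol b\rangle+2\alpha_i\langle\boldsymbol a,\boldsymbol u_i\rangle\langle\boldsymbol b,\boldsymbol u_i\rangle,\qquad\mu_i\colonequals\lambda_i-\langle\boldsymbol u_i,\boldsymbol w\rangle .$$
Symmetry of $C'[\boldsymbol u_i,\boldsymbol u_j,\cdot]$ in $i,j$, together with independence of $\boldsymbol u_i,\boldsymbol u_j$, gives $\mu_i=2\alpha_j\langle\boldsymbol u_i,\boldsymbol u_j\rangle$ for all $j\neq i$. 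Since the inner products are nonzero, each $\mu_i$ vanishes if and only if every $\alpha_j$ with $j\neq i$ does. So it suffices to prove that all $\alpha_i=0$. Then all $\mu_i=0$, so $C'[\boldsymbol u_k,\cdot,\cdot]=0$ on the basis $\boldsymbol u_1,\ldots,\boldsymbol u_d$, and therefore $C'=0$.

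\textbf{Step 3: the two extra vectors (main obstacle).} This is where the count $n+1=d+2$ is used. Write $\boldsymbol u_{d+1}=\sum_{k\leq d}c_k\boldsymbol u_k$; general position makes every $c_k\neq 0$. Linearity of $C'$ in its first slot gives an identity of quadratic forms:
$$\Big(\mu_{d+1}-\sum_k c_k\mu_k\Big)\langle\boldsymbol x,\boldsymbol x\rangle=\sum_{k\leq d}2c_k\alpha_k\langle\boldsymbol x,\boldsymbol u_k\rangle^2-2\alpha_{d+1}\langle\boldsymbol x,\boldsymbol u_{d+1}\rangle^2 .$$
\begin{itemize}
\item If the left side is zero, then linear independence of the $d+1$ squares $\langle\boldsymbol x,\boldsymbol u_k\rangle^2$ applies. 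For $d\geq 3$ these are in general position, and $d+1\leq d(d+1)/2$. This forces all $\alpha_k=0$ at once.
\item If the left side is nonzero, then the identity form lies in the span of these $d+1$ squares. I expect this to be the delicate case. I would rule it out by testing the identity on the $(d-2)$-dimensional subspace $\{\boldsymbol x:\boldsymbol x\perp\boldsymbol u_k,\boldsymbol u_l\}$ and using that the $\boldsymbol u$'s are pairwise non-orthogonal. Failing that, I would repeat the same identity with $\boldsymbol u_{d+2}$, and with other choices of the basis $d$-tuple, and compare the resulting representations of $\langle\boldsymbol x,\boldsymbol x\rangle$. The combined constraints $\mu_i=2\alpha_j\langle\boldsymbol u_i,\boldsymbol u_j\rangle$ for all pairs should then be incompatible unless every $\alpha$ is zero.
\end{itemize}
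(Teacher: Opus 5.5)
Your Steps 1 and 2 are correct and match the first half of the paper's proof. Your $\alpha_i$ is the paper's $\mu_i$, and your relation $\mu_i=2\alpha_j\langle\boldsymbol u_i,\boldsymbol u_j\rangle$ for $i\neq j$ is \eqref{eq:18} rewritten.

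\textbf{The gap is in Step 3.} The second bullet, where $\mu_{d+1}-\sum_k c_k\mu_k\neq 0$, is not proved; it is only a list of things you would try. Moreover, the identity you derived for $\boldsymbol u_{d+1}$ alone cannot exclude that case. The form $\langle\boldsymbol x,\boldsymbol x\rangle$ really can be a combination of $d+1$ squares $\langle\boldsymbol x,\boldsymbol u_k\rangle^2$ of pairwise non-orthogonal vectors in general position. For example, take a basis $\boldsymbol u_1,\ldots,\boldsymbol u_d$ with Gram matrix $(\mathbf I+\boldsymbol c\boldsymbol c^\top)^{-1}$, where $\boldsymbol c=(c_1,\ldots,c_d)$ has all $c_k\neq 0$, and put $\boldsymbol u_{d+1}=\sum_k c_k\boldsymbol u_k$. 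Then $\langle\boldsymbol x,\boldsymbol x\rangle=\sum_{k\leq d+1}\langle\boldsymbol x,\boldsymbol u_k\rangle^2$. So the quadratic-form route needs further input, and that input is already in your Step 2.

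\textbf{The missing step.} Use the Step 2 relation with $j=d+1$ and $j=d+2$ simultaneously. For every $i\leq d$ it gives $\langle\boldsymbol u_i,\alpha_{d+1}\boldsymbol u_{d+1}-\alpha_{d+2}\boldsymbol u_{d+2}\rangle=\tfrac12\mu_i-\tfrac12\mu_i=0$.
\begin{enumerate}
\item Since $\boldsymbol u_1,\ldots,\boldsymbol u_d$ is a basis, this forces $\alpha_{d+1}\boldsymbol u_{d+1}=\alpha_{d+2}\boldsymbol u_{d+2}$.
\item Since $\boldsymbol u_{d+1},\boldsymbol u_{d+2}$ are independent, $\alpha_{d+1}=\alpha_{d+2}=0$.
\item Every $\mu_i$ then vanishes: use $j=d+1$ for $i\neq d+1$, and $j=d+2$ for $i=d+1$.
\item By your own remark in Step 2, every $\alpha_j$ then vanishes, so $C'=0$.
\end{enumerate}

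This is exactly how the paper concludes. It first shears so that $\lambda_1=\cdots=\lambda_{n-1}=0$, then reads off $2\mu_n\boldsymbol u_n+\boldsymbol w=\mathbf 0=2\mu_{n+1}\boldsymbol u_{n+1}+\boldsymbol w$ from \eqref{eq:18}. The shear there is only a normalization, and your unsheared version works equally well. With this step in place, the case split and the independence-of-squares argument in Step 3 become unnecessary.

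A harmless slip: \eqref{eq:11} is added to $f$ itself, whose cubic term is $\frac16D^3f[\boldsymbol x,\boldsymbol x,\boldsymbol x]=\frac12\langle\boldsymbol x,\boldsymbol x\rangle\langle\boldsymbol x,\boldsymbol w\rangle$. So the shear that removes your trace cubic is $\boldsymbol\eta=\boldsymbol w$, not $2\boldsymbol w$.
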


\begin{proof}
Without loss of generality we may assume that $u_1,u_2,\ldots,u_{n+1}$ are the vertices of $K_{n+1}$. By \eqref{eq:09}, we have
$$\boldsymbol v_i-\boldsymbol v_1=\mu_i\boldsymbol u_i-\nu_i\boldsymbol u_1$$
for every $i\neq 1$, whence
$$\boldsymbol v_i-\boldsymbol v_j=\mu_i\boldsymbol u_i-\mu_j\boldsymbol u_j-(\nu_i-\nu_j)\boldsymbol u_1$$
for every $i,j\neq 1$. Because $\dim T_pM^{n-1}\geq 3$ and the light sources are in a general position, $\boldsymbol u_1$ does not lie in the subspace $\Span\{\boldsymbol u_i,\boldsymbol u_j\}$ spanned by $\boldsymbol u_i$ and $\boldsymbol u_j$. Again by \eqref{eq:09}, it follows that $\nu_i-\nu_j=0$ for every $i,j\neq 1$, and thus $\nu_i\equalscolon\nu$ for every $i\neq 1$. Hence
\begin{equation}\label{eq:17}\boldsymbol v_i=\mu_i\boldsymbol u_i+\boldsymbol v_1-\nu\boldsymbol u_1\equalscolon\mu_i\boldsymbol u_i+\boldsymbol w.\end{equation}
Furthermore, by defining $\mu_1\colonequals\nu$, we can make \eqref{eq:17} hold for every $i=1,2,\ldots,n+1$. Substituting \eqref{eq:17} back to \eqref{eq:09} yields
$$\mu_i=\frac{\lambda_j-\langle\boldsymbol u_j,\mu_i\boldsymbol u_i+\boldsymbol w\rangle}{\langle\boldsymbol u_j,\boldsymbol u_i\rangle}=\frac{\lambda_j-\langle\boldsymbol u_j,\boldsymbol w\rangle}{\langle\boldsymbol u_j,\boldsymbol u_i\rangle}-\mu_i$$
for every $i\neq j$, which is equivalent to
\begin{equation}\label{eq:18}\langle\boldsymbol u_j,2\mu_i\boldsymbol u_i+\boldsymbol w\rangle=\lambda_j.\end{equation}

\begin{claim}
Without loss of generality, we may now assume that $\lambda_i=0$ for every $i=1,2,\ldots,n-1$. Indeed, we may still add to the right-hand side of \eqref{eq:16} a trilinear form
$$-\langle\boldsymbol a,\boldsymbol b\rangle\langle\boldsymbol u_i,\boldsymbol\eta\rangle-\langle\boldsymbol a,\boldsymbol u_i\rangle\langle\boldsymbol b,\boldsymbol\eta\rangle-\langle\boldsymbol a,\boldsymbol\eta\rangle\langle\boldsymbol b,\boldsymbol u_i\rangle,$$
which corresponds to \eqref{eq:11}. If we choose $\boldsymbol\eta$ satisfying $\langle\boldsymbol u_i,\boldsymbol\eta\rangle=\lambda_i$ for every $i=1,2,\ldots,n-1$, i.e., $\boldsymbol\eta=\boldsymbol U^{-\top}\boldsymbol\lambda$, we obtain
$$D^3f[\boldsymbol u_i,\boldsymbol a,\boldsymbol b]=\langle\boldsymbol a,\boldsymbol u_i\rangle\langle\boldsymbol b,\boldsymbol v_i-\boldsymbol\eta\rangle+\langle\boldsymbol a,\boldsymbol v_i-\boldsymbol\eta\rangle\langle\boldsymbol b,\boldsymbol u_i\rangle$$
for every $i=1,2,\ldots,n-1$. Replacing $\lambda_i$ by $\lambda_i-\langle\boldsymbol u_i,\boldsymbol\eta\rangle$ and $\boldsymbol v_i$ by $\boldsymbol v_i-\boldsymbol\eta$ for every $i=1,2,\ldots,L$ concludes the argument.
\end{claim}

Now, since $\lambda_j=0$ for every $j=1,2,\ldots,n-1$, we have $2\mu_n\boldsymbol u_n+\boldsymbol w=\mathbf 0=2\mu_{n+1}\boldsymbol u_{n+1}+\boldsymbol w$. Indeed, both vectors are orthogonal to $\boldsymbol u_1,\boldsymbol u_2,\ldots,\boldsymbol u_{n-1}$, which span $T_pM^{n-1}$. It follows that $2\mu_n\boldsymbol u_n=-\boldsymbol w=2\mu_{n+1}\boldsymbol u_{n+1}$, whence $\mu_n,\mu_{n+1}=0$ and consequently $\boldsymbol w=\mathbf 0$. Substituting back to \eqref{eq:18} yields $2\mu_i\langle\boldsymbol u_j,\boldsymbol u_i\rangle=0$, whence $\mu_i=0$ for every $i=1,2,\ldots,n-1$. By \eqref{eq:17}, we have $\boldsymbol v_i=\mathbf 0$ for every $i=1,2,\ldots,n-1$, and further, by \eqref{eq:16}, we have $D^3f[\boldsymbol u_i,{}\cdot{},{}\cdot{}]=\mathbf 0$ for every $i=1,2,\ldots,n-1$. Because $\boldsymbol u_1,\boldsymbol u_2,\ldots,\boldsymbol u_{n-1}$ span $T_pM^{n-1}$, this concludes the proof.
\end{proof}

\begin{proof}[Proof of \thref{thm:01}]
Let $p\in M^{n-1}$ be the point attaining the maximal Euclidean distance from the origin. It means that $M^{n-1}$ is contained in some sphere tangent to $M^{n-1}$ at $p$. In particular, the second fundamental form of $M^{n-1}$ at $p$ majorizes the second fundamental form of the sphere, and thus $M^{n-1}$ is strongly convex on some open neighborhood of $p$. Let $U\subseteq M^{n-1}$ be a maximal open neighborhood of $p$ where the second fundamental form of $M^{n-1}$ is positive definite.\\

Now, let $L\colonequals R(n+1,n)$ be the Ramsey number and let $G$ be the orthogonality graph on vertices $(u_i)_{i=1}^L$. By Ramsey's Theorem, we will find either a subset of $n+1$ vectors in general position that are pairwise non-orthogonal or a subset of $n$ vectors that are pairwise orthogonal. However, since the dimension of $T_pM^{n-1}$ is less than $n$, the latter is not possible. Hence, we can apply \thref{lem:02} to conclude that the cubic form \eqref{eq:15} vanishes at $p$. It follows from \thref{thm:08} that $U$ is contained in some hyperquadric $Q^{n-1}$.\\

Finally, suppose that the boundary $\partial U$ is non-empty and let $p\in\partial U$. Since $Q^{n-1}$ is locally strongly convex, the second fundamental form of $Q^{n-1}$ at $p$ is positive definite. However, the second fundamental form of $M^{n-1}$ at $p$ must be equal to the latter, and thus it is also positive definite on some open neighborhood of $p$, which contradicts the definition of $p$. It follows that $U=M^{n-1}$, which concludes the proof.
\end{proof}

In dimension $n=3$, the above argument breaks down. This is because the vectors $\boldsymbol a,\boldsymbol b,\boldsymbol c,\boldsymbol d$ are $1$-dimensional and each of the equations \eqref{eq:06}--\eqref{eq:08}, when separated from the others, is vacuously satisfied for some values of the parameters $m,r$. Only three combined equations allow us to eliminate the values of the two parameters and obtain a non-trivial condition for the series expansion of $f$. However, for this, we need to assume higher smoothness of the boundary.

\begin{proof}[Proof of \thref{thm:02}]
In dimension $n=3$, equations \eqref{eq:06}--\eqref{eq:08} read
\begin{align}
\label{eq:19}f^{(1,2)}={}&(-m +r^{-1}),\\
\label{eq:20}f^{(1,3)}={}&(-m +2 r^{-1}) f^{(0,3)} -3 m f^{(2,1)},\\
\label{eq:21}f^{(1,4)}={}&(-m +3 r^{-1}) f^{(0,4)} -6 m f^{(2,2)} -4 m f^{(0,3)} f^{(2,1)}\\
\nonumber&\quad -3 ((-m +r^{-1}) (-4 m +r^{-1}) r^{-1} +f^{(3,0)} (m^2 -m r^{-1} +r^{-2})).
\end{align}
Suppose that $f^{(0,3)} -3 f^{(2,1)}\neq 0$. Solving \eqref{eq:19}, \eqref{eq:20} for $m,r^{-1}$ yields
$$m= -\frac{2 f^{(0,3)} f^{(1,2)}-f^{(1,3)}}{f^{(0,3)}-3 f^{(2,1)}},\quad r^{-1}= -\frac{f^{(0,3)} f^{(1,2)}+3 f^{(2,1)} f^{(1,2)}-f^{(1,3)}}{f^{(0,3)}-3 f^{(2,1)}}.$$
After substituting these values to \eqref{eq:21}, we obtain that the expression
\begin{align*}
&-8 f^{(1,2)} f^{(2,1)} f^{(0,3)3}-21 f^{(1,2)3} f^{(0,3)2}+24 f^{(1,2)} f^{(2,1)2} f^{(0,3)2}+f^{(0,4)} f^{(1,2)} f^{(0,3)2}+f^{(1,4)} f^{(0,3)2}\\
&\quad +4 f^{(1,3)} f^{(2,1)} f^{(0,3)2}-12 f^{(1,2)} f^{(2,2)} f^{(0,3)2}+9 f^{(1,2)2} f^{(3,0)} f^{(0,3)2}-12 f^{(1,3)} f^{(2,1)2} f^{(0,3)}\\
&\quad +30 f^{(1,2)2} f^{(1,3)} f^{(0,3)}-2 f^{(0,4)} f^{(1,3)} f^{(0,3)}-54 f^{(1,2)3} f^{(2,1)} f^{(0,3)}+6 f^{(0,4)} f^{(1,2)} f^{(2,1)} f^{(0,3)}\\
&\quad -6 f^{(1,4)} f^{(2,1)} f^{(0,3)}+6 f^{(1,3)} f^{(2,2)} f^{(0,3)}+36 f^{(1,2)} f^{(2,1)} f^{(2,2)} f^{(0,3)}-9 f^{(1,2)} f^{(1,3)} f^{(3,0)} f^{(0,3)}\\
&\quad -9 f^{(1,2)} f^{(1,3)2}+27 f^{(1,2)3} f^{(2,1)2}-27 f^{(0,4)} f^{(1,2)} f^{(2,1)2}+9 f^{(1,4)} f^{(2,1)2}+18 f^{(1,2)2} f^{(1,3)} f^{(2,1)}\\
&\quad +6 f^{(0,4)} f^{(1,3)} f^{(2,1)}-18 f^{(1,3)} f^{(2,1)} f^{(2,2)}+3 f^{(1,3)2} f^{(3,0)}+27 f^{(1,2)2} f^{(2,1)2} f^{(3,0)}-9 f^{(1,2)} f^{(1,3)} f^{(2,1)} f^{(3,0)}
\end{align*}
must vanish in the direction of each vector $\boldsymbol u_i$. In other words, it must vanish after composing the function $f$ with at least $L(3)$ different rotations. Such a composition with an abstract rotation by angle $t$ turns out to be a trigonometric polynomial in $t$ of degree $15$. As such, it can have at most $30$ roots, unless all its Fourier coefficients vanish. However, the leading coefficient by $e^{15it}$ is equal to
$$\frac{(i f^{(0,3)}+3 f^{(1,2)}-3 i f^{(2,1)}-f^{(3,0)})^5}{4096},$$
which by assumption is non-zero. Now, if we choose $L(3)$ to be at least $31$, it leads to a contradiction.\\

It follows that the expression $f^{(0,3)} -3 f^{(2,1)}$ must vanish in the direction of each vector $\boldsymbol u_i$. Its composition with an abstract rotation by angle $t$ is equal to
$$\frac{1}{2} (f^{(0,3)}+3 i f^{(1,2)}-3 f^{(2,1)}-i f^{(3,0)}) e^{-3it}+\frac{1}{2} (f^{(0,3)}-3 i f^{(1,2)}-3 f^{(2,1)}+i f^{(3,0)}) e^{3it},$$
which vanishes identically iff $f^{(0,3)} -3 f^{(2,1)}=0$ and $f^{(3,0)} -3 f^{(1,2)}=0$. In this case, we have
$$f(x,y)=\frac{1}{2}(x^2+y^2)+\frac{1}{6}(x^2+y^2)(x f^{(3,0)}+y f^{(0,3)})+O(\sqrt{x^2+y^2})^4.$$
Recall that we may still add to the right-hand side a cubic form \eqref{eq:11}, so that
$$f(x,y)=\frac{1}{2}(x^2+y^2)+O(\sqrt{x^2+y^2})^4.$$
Hence, the cubic form \eqref{eq:15} vanishes, and we conclude the proof exactly as in the case of \thref{thm:01}.
\end{proof}

%%%%%%%%%%%%%%%%%%%%%%%%%%%%%%%%%%%%%%%%%%%%%%%%%%%%%%%%%%%%%%%%%%%%%%
\section{Bounds on the number of point light sources}\label{sec:03}
%%%%%%%%%%%%%%%%%%%%%%%%%%%%%%%%%%%%%%%%%%%%%%%%%%%%%%%%%%%%%%%%%%%%%%

Throughout this section, we will assume that $n\geq 4$.

\subsection{Upper bound}

In the proof of \thref{thm:01} we have been quite profligate in requiring that $G$ contains a complete graph on $n+1$ vertices, but since we were not concerned with the optimal value of the constant $L(n)$, this did not matter much to us, and it significantly simplified the proof. A classical lower bound on the diagonal Ramsey numbers due to P.~Erd\H{o}s \cite[Theorem~1]{Erds1947SomeRO} yields
$$R(n+1,n)\geq 2^{n/2},$$
which is exponential in $n$. However, we do not need all of the given information. The argument naturally splits into two independent parts. In the first part, we show that if $G$ contains a subgraph $K_{n+1}$, then $\boldsymbol v_i=\mu_i\boldsymbol u_i+\boldsymbol w$ for some fixed $\boldsymbol w\in\mathbb R^{n-1}$ and every $i=1,2,\ldots,n+1$. In the second part, we conclude that, in a suitable coordinate system, $D^3f=\mathbf 0$. This last observation is completely general:

\begin{lemma}\thlabel{lem:06}
Suppose that
\begin{equation}\label{eq:24}\boldsymbol v_i=\mu_i\boldsymbol u_i+\boldsymbol w\end{equation}
holds for some fixed $\boldsymbol w\in\mathbb R^{n-1}$ and some light sources $u_1,u_2,\ldots,u_{n+1}$. Then, in a suitable coordinate system, $D^3f=\mathbf 0$.
\end{lemma}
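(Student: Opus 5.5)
The plan is to rerun the second half of the proof of \thref{lem:02}, now using only \eqref{eq:24} and \eqref{eq:16}, and not assuming that the orthogonality graph is complete.

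\textbf{Normalization.} First I would apply the Claim from the proof of \thref{lem:02}. The vectors $\boldsymbol u_1,\ldots,\boldsymbol u_{n-1}$ are in general linear position with respect to $p$, so they form a basis of $T_pM^{n-1}$. Adding the cubic form \eqref{eq:11} with $\boldsymbol\eta=\boldsymbol U^{-\top}\boldsymbol\lambda$ therefore makes $\lambda_1=\dots=\lambda_{n-1}=0$. This replaces every $\boldsymbol v_i$ by $\boldsymbol v_i-\boldsymbol\eta$, so \eqref{eq:24} still holds, with $\boldsymbol w$ replaced by $\boldsymbol w-\boldsymbol\eta$.

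\textbf{The key relation.} Next I would derive, for all $i\neq j$, the identity that played the role of \eqref{eq:18}. Substituting \eqref{eq:24} into the symmetry relation
$$\langle\boldsymbol u_i,\boldsymbol u_j\rangle(\boldsymbol v_i-\boldsymbol v_j)=(\lambda_j-\langle\boldsymbol u_j,\boldsymbol v_i\rangle)\boldsymbol u_i-(\lambda_i-\langle\boldsymbol u_i,\boldsymbol v_j\rangle)\boldsymbol u_j$$
gives
$$\langle\boldsymbol u_i,\boldsymbol u_j\rangle(\mu_i\boldsymbol u_i-\mu_j\boldsymbol u_j)=(\lambda_j-\mu_i\langle\boldsymbol u_j,\boldsymbol u_i\rangle-\langle\boldsymbol u_j,\boldsymbol w\rangle)\boldsymbol u_i-(\lambda_i-\mu_j\langle\boldsymbol u_i,\boldsymbol u_j\rangle-\langle\boldsymbol u_i,\boldsymbol w\rangle)\boldsymbol u_j.$$
Since $\boldsymbol u_i,\boldsymbol u_j$ are linearly independent by general position, I can compare coefficients. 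This yields $\langle\boldsymbol u_j,2\mu_i\boldsymbol u_i+\boldsymbol w\rangle=\lambda_j$ for every ordered pair $i\neq j$, with no orthogonality assumption. This is exactly \eqref{eq:18}, and it covers the orthogonal case \eqref{eq:10} at the same time.

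\textbf{Forcing everything to vanish.} With $\lambda_j=0$ for $j\le n-1$, the vector $2\mu_n\boldsymbol u_n+\boldsymbol w$ is orthogonal to the basis $\boldsymbol u_1,\ldots,\boldsymbol u_{n-1}$, so it is zero. The same holds for $2\mu_{n+1}\boldsymbol u_{n+1}+\boldsymbol w$. Hence $\mu_n\boldsymbol u_n=\mu_{n+1}\boldsymbol u_{n+1}$. Since $\boldsymbol u_n,\boldsymbol u_{n+1}$ are independent, $\mu_n=\mu_{n+1}=0$ and $\boldsymbol w=\mathbf 0$.

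Then the relation reads $2\mu_i\langle\boldsymbol u_j,\boldsymbol u_i\rangle=\lambda_j=0$ for $i\le n-1$ and $j\le n-1$, $j\ne i$. Taking also $j=n$, it reads $2\mu_i\langle\boldsymbol u_n,\boldsymbol u_i\rangle=\lambda_n=\langle\boldsymbol u_n,\boldsymbol w\rangle=0$ (using the relation with $i=n+1$, $j=n$). So $\boldsymbol u_i$ is orthogonal to $\mu_i\boldsymbol u_j$ for all $n-1$ indices $j\in\{1,\ldots,n\}\setminus\{i\}$.

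\textbf{Main obstacle.} This is where the earlier proof used non-orthogonality of the $\boldsymbol u_i$, and I expect it to be the crux. I would handle it by general position instead. Any $n-1$ of the vectors are a basis, so the vectors $\boldsymbol u_j$ with $j\neq i$, $j\le n$ span $T_pM^{n-1}$. If $\mu_i\neq0$, then $\boldsymbol u_i$ would be orthogonal to all of them and hence zero, which is impossible. So $\mu_i=0$ for all $i$.

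\textbf{Conclusion.} Now $\boldsymbol v_i=\mathbf 0$ and $\lambda_i=0$ for $i\le n-1$, so \eqref{eq:16} gives $D^3f[\boldsymbol u_i,\cdot,\cdot]=\mathbf 0$ on a basis. Therefore $D^3f=\mathbf 0$ in the normalized coordinate system.
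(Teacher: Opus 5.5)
Your proof is correct, but it takes a different route from the paper's proof of this lemma. In effect it is the second half of the proof of \thref{lem:02}, with the one step that used non-orthogonality repaired.

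The paper stays in the original coordinates throughout:
\begin{enumerate}
\item It derives \eqref{eq:12} and \eqref{eq:23} separately for non-orthogonal and orthogonal pairs.
\item It combines them into $\langle\mu_i\boldsymbol u_i,\mu_j\boldsymbol u_j\rangle=\xi$ for all $i\neq j$.
\item It excludes the case where every $\mu_i\neq 0$ by a rank argument. The Gram matrix of the $n+1$ vectors $\mu_i\boldsymbol u_i\in\mathbb R^{n-1}$ would be a rank-one perturbation of a diagonal matrix, which is incompatible with rank at most $n-1$.
\item It excludes a mixture of zero and non-zero $\mu_i$ by a dimension count on mutually orthogonal spans.
\item Only then, knowing $\mu_i=0$ and $\lambda_i=\langle\boldsymbol u_i,\boldsymbol w\rangle$ for all $i$, does it recognize the cubic form as $3\langle\boldsymbol x,\boldsymbol x\rangle\langle\boldsymbol x,\boldsymbol w\rangle$ and remove it via \eqref{eq:11}.
\end{enumerate}

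You proceed differently:
\begin{enumerate}
\item You apply \eqref{eq:11} first, correctly noting that it preserves \eqref{eq:24} and leaves the $\mu_i$ unchanged.
\item You obtain the single relation $\langle\boldsymbol u_j,2\mu_i\boldsymbol u_i+\boldsymbol w\rangle=\lambda_j$ for every ordered pair by comparing coefficients. This is legitimate because $\boldsymbol w$ cancels in $\boldsymbol v_i-\boldsymbol v_j$, so no orthogonal/non-orthogonal case split is needed.
\item You use the two surplus sources $u_n,u_{n+1}$ to force $\boldsymbol w=\mathbf 0$, $\mu_n=\mu_{n+1}=0$ and $\lambda_n=0$.
\item You replace the non-orthogonality used in \thref{lem:02} by the fact that $\{\boldsymbol u_j:j\leq n,\ j\neq i\}$ is a basis.
\end{enumerate}

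Your version is shorter and more elementary, with no Gram-matrix analysis and no split into $X$ and $Y$. It also makes explicit that the complete-graph hypothesis of \thref{lem:02} is needed only for its first half. The paper's version treats all $n+1$ sources symmetrically and extracts the intermediate identity $\langle\mu_i\boldsymbol u_i,\mu_j\boldsymbol u_j\rangle=\xi$. Since your normalization does not touch the $\mu_i$, you recover the same structural conclusion that all $\mu_i$ vanish anyway.
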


\begin{proof}
Let $\boldsymbol u_i,\boldsymbol u_j$, $i\neq j$, be any pair of vectors. They may or may not be orthogonal. In the latter case, substituting \eqref{eq:24} back to \eqref{eq:09} yields
\begin{gather*}
\mu_i=\frac{\lambda_j-\langle\boldsymbol u_j,\mu_i\boldsymbol u_i+\boldsymbol w\rangle}{\langle\boldsymbol u_j,\boldsymbol u_i\rangle}=\frac{\lambda_j-\langle\boldsymbol u_j,\boldsymbol w\rangle}{\langle\boldsymbol u_j,\boldsymbol u_i\rangle}-\mu_i,\\
\mu_j=\frac{\lambda_i-\langle\boldsymbol u_i,\mu_j\boldsymbol u_j+\boldsymbol w\rangle}{\langle\boldsymbol u_i,\boldsymbol u_j\rangle}=\frac{\lambda_i-\langle\boldsymbol u_i,\boldsymbol w\rangle}{\langle\boldsymbol u_i,\boldsymbol u_j\rangle}-\mu_j,
\end{gather*}
which is equivalent to
\begin{equation}\label{eq:12}2\langle\boldsymbol u_j,\mu_i\boldsymbol u_i\rangle=\lambda_j-\langle\boldsymbol u_j,\boldsymbol w\rangle,\quad 2\langle\boldsymbol u_i,\mu_j\boldsymbol u_j\rangle=\lambda_i-\langle\boldsymbol u_i,\boldsymbol w\rangle.\end{equation}
In the former case, substituting \eqref{eq:24} back to \eqref{eq:10} immediately yields
\begin{equation}\label{eq:23}2\langle\boldsymbol u_j,\mu_i\boldsymbol u_i\rangle=0=\lambda_j-\langle\boldsymbol u_j,\boldsymbol w\rangle,\quad 2\langle\boldsymbol u_i,\mu_j\boldsymbol u_j\rangle=0=\lambda_i-\langle\boldsymbol u_i,\boldsymbol w\rangle.\end{equation}
In either case, it follows that
$$\mu_i\lambda_i-\langle\mu_i\boldsymbol u_i,\boldsymbol w\rangle=2\langle\mu_i\boldsymbol u_i,\mu_j\boldsymbol u_j\rangle=\mu_j\lambda_j-\langle\mu_j\boldsymbol u_j,\boldsymbol w\rangle,$$
whence
\begin{equation}\label{eq:13}\lambda_i\mu_i-\langle\mu_i\boldsymbol u_i,\boldsymbol w\rangle\equalscolon 2\xi,\quad\langle\mu_i\boldsymbol u_i,\mu_j\boldsymbol u_j\rangle=\xi \end{equation}
for every $i\neq j$.\\

In the rest of the proof, we will look separately at vectors with non-zero and zero coefficients. Define $X\colonequals\{\boldsymbol u_i:\mu_i\neq 0\}$ and $Y\colonequals\{\boldsymbol u_i:\mu_i=0\}$.

\begin{claim}
Without loss of generality, we may assume that $|X|\leq n$. Indeed, suppose the contrary and define the $(n-1)\times(n+1)$ matrix
$$\boldsymbol U\colonequals\begin{pmatrix}\mu_1\boldsymbol u_1,&\mu_2\boldsymbol u_2,&\ldots,&\mu_{n+1}\boldsymbol u_{n+1}\end{pmatrix},$$
where $\boldsymbol u_1,\boldsymbol u_2,\ldots,\boldsymbol u_{n+1}\in X$ are some arbitrary vectors. By \eqref{eq:13} we have
$$\boldsymbol U^\top\boldsymbol U=\begin{pmatrix}\|\mu_1\boldsymbol u_1\|^2&\xi&\cdots&\xi&\xi\\\xi&\|\mu_2\boldsymbol u_2\|^2&\cdots&\xi&\xi\\\vdots&&\ddots&&\vdots\\\xi&\xi&\cdots&\|\mu_n\boldsymbol u_n\|^2&\xi\\\xi&\xi&\cdots&\xi&\|\mu_{n+1}\boldsymbol u_{n+1}\|^2\end{pmatrix},$$
which can be rewritten as
$$\diag\begin{pmatrix}\|\mu_1\boldsymbol u_1\|^2-\xi,&\|\mu_2\boldsymbol u_2\|^2-\xi,&\ldots,&\|\mu_n\boldsymbol u_n\|^2-\xi,&\|\mu_{n+1}\boldsymbol u_{n+1}\|^2-\xi\end{pmatrix}+\xi\boldsymbol J.$$

On the one hand, $\boldsymbol U^\top\boldsymbol U$ is a Gram matrix, whence its rank equals the dimension of the column space of $\boldsymbol U$, i.e., $n-1$. On the other hand, it is a rank-one perturbation of some diagonal matrix $\boldsymbol D$. Now, if $\boldsymbol D$ has non-zero entries, then its rank equals $n+1$ and hence the rank of $\boldsymbol U^\top\boldsymbol U$ is at least $n$, a contradiction. Secondly, if $\boldsymbol D$ has exactly one zero entry, then the rank of $\boldsymbol U^\top\boldsymbol U$ equals $n+1$. Indeed, without loss of generality, we may assume that $\|\mu_{n+1}\boldsymbol u_{n+1}\|^2-\xi=0$. Subtracting the last row from every other row, we obtain a lower-triangular matrix
$$\begin{pmatrix}\|\mu_1\boldsymbol u_1\|^2-\xi&0&\cdots&0&0\\0&\|\mu_2\boldsymbol u_2\|^2-\xi&\cdots&0&0\\\vdots&&\ddots&&\vdots\\0&0&\cdots&\|\mu_n\boldsymbol u_n\|^2-\xi&0\\\xi&\xi&\cdots&\xi&\|\mu_{n+1}\boldsymbol u_{n+1}\|^2\end{pmatrix},$$
with non-zero entries on the diagonal, whence its rank equals $n+1$, a contradiction. Finally, if $\boldsymbol D$ has at least two zero entries, without loss of generality we may assume that $\|\mu_1\boldsymbol u_1\|^2-\xi=0=\|\mu_2\boldsymbol u_2\|^2-\xi$, whence $\|\mu_1\boldsymbol u_1-\mu_2\boldsymbol u_2\|^2=\xi-2\xi+\xi=0$, a contradiction.
\end{claim}

\begin{claim}
Without loss of generality, we may assume that $|Y|=n+1$. For suppose to the contrary that $|Y|\leq n$. Since $|X|+|Y|=n+1$ and $|X|\leq n$, it follows that $Y$ is non-empty. Let $\boldsymbol u_i\in Y$ be some arbitrary vector, and define $\tilde Y\colonequals Y\setminus\{\boldsymbol u_i\}$. Then \eqref{eq:12} and \eqref{eq:23} imply that
$$0=2\langle\boldsymbol u_j,\mu_i\boldsymbol u_i\rangle=\lambda_j-\langle\boldsymbol u_j,\boldsymbol w\rangle=2\langle\boldsymbol u_j,\mu_k\boldsymbol u_k\rangle$$
for every $j\neq i$ and $k\neq j$, whence $\boldsymbol u_k\perp\boldsymbol u_j$ for every $\boldsymbol u_k\in X$ and every other $\boldsymbol u_j\in X\cup\tilde Y$. In particular, we have $\Span X\perp\Span\tilde Y$. Further, the elements of $X$ are pairwise orthogonal and thus linearly independent. Because the light sources are in a general position and, by assumption, $|\tilde Y|\leq n-1$, the elements of $\tilde Y$ are likewise linearly independent. It follows that $n=|X|+|\tilde Y|=\dim\Span X+\dim\Span\tilde Y=\dim(\Span X\oplus\Span\tilde Y)\leq n-1$, a contradiction.
\end{claim}

Finally, let $\boldsymbol u_1,\boldsymbol u_2,\ldots,\boldsymbol u_{n-1}\in Y$ be some arbitrary vectors. By \eqref{eq:12} and \eqref{eq:23} we have $\lambda_i=\langle\boldsymbol u_i,\boldsymbol w\rangle$ for every $i=1,2,\ldots,n-1$. Substituting this back to \eqref{eq:16} yields
$$D^3f[\boldsymbol u_i,\boldsymbol a,\boldsymbol b]=\langle\boldsymbol u_i,\boldsymbol w\rangle\langle\boldsymbol a,\boldsymbol b\rangle+\langle\boldsymbol a,\boldsymbol u_i\rangle\langle\boldsymbol b,\boldsymbol w\rangle+\langle\boldsymbol a,\boldsymbol w\rangle\langle\boldsymbol b,\boldsymbol u_i\rangle.$$
Since the vectors $\boldsymbol u_1,\boldsymbol u_2,\ldots,\boldsymbol u_{n-1}$ span $T_pM^{n-1}$, the above equality holds for every $\boldsymbol u,\boldsymbol a,\boldsymbol b\in T_pM^{n-1}$. In particular, it follows that $D^3f[\boldsymbol x,\boldsymbol x,\boldsymbol x]=3\langle\boldsymbol x,\boldsymbol x\rangle\langle\boldsymbol x,\boldsymbol w\rangle$. By taking $\boldsymbol\eta\colonequals 6\boldsymbol w$ in \eqref{eq:11}, we conclude the proof.
\end{proof}

If we are sufficiently careful, we can significantly weaken the assumption in the first part of our argument:

\begin{lemma}
Suppose that $G$ admits a proper ear decomposition with ears of vertex-length at most $n-1$. Then \eqref{eq:24} holds for some fixed $\boldsymbol w\in\mathbb R^{n-1}$ and every $u_i\in V(G)$.
\end{lemma}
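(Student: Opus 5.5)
The plan is to propagate the relation \eqref{eq:24} along the ear decomposition: first establish it on the initial cycle, then show that attaching each ear preserves it with the same vector $\boldsymbol w$.

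\textbf{Step 1: the local relation along an edge.} For every edge $ij$ of $G$, \eqref{eq:09} gives
\[
\boldsymbol v_i-\boldsymbol v_j\in\Span\{\boldsymbol u_i,\boldsymbol u_j\}.
\]
Write it as $\boldsymbol v_i-\boldsymbol v_j=\alpha_{ij}\boldsymbol u_i-\alpha_{ji}\boldsymbol u_j$.

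\textbf{Step 2: a cycle of length at most $n-1$.} Let the cycle be $u_1u_2\cdots u_ku_1$ with $k\leq n-1$. Summing the edge relations around the cycle gives
\[
\sum_{t=1}^k\bigl(\alpha_{t,t+1}-\alpha_{t,t-1}\bigr)\boldsymbol u_t=\mathbf 0,
\]
with indices taken mod $k$. Since $k\leq n-1$ vectors in general linear position with respect to $p$ are linearly independent, every coefficient vanishes: $\alpha_{t,t+1}=\alpha_{t,t-1}\equalscolon\mu_t$. Then
\[
\boldsymbol v_t-\mu_t\boldsymbol u_t=\boldsymbol v_{t+1}-\mu_{t+1}\boldsymbol u_{t+1}
\]
for each edge of the cycle, so all these vectors equal a common $\boldsymbol w$. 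This proves \eqref{eq:24} on the cycle.

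\textbf{Step 3: attaching an ear.} Now attach an open ear $u_aP u_b$, where $u_a,u_b$ already satisfy \eqref{eq:24} with the same $\boldsymbol w$ and the interior vertices $P$ are new. The intended argument repeats Step 2.
\begin{itemize}
\item Sum the edge relations along the ear from $u_a$ to $u_b$.
\item Compare the sum with $\boldsymbol v_a-\boldsymbol v_b=\mu_a\boldsymbol u_a-\mu_b\boldsymbol u_b$, which holds because both endpoints satisfy \eqref{eq:24}.
\item This yields a linear relation among the vectors of the ear, endpoints included.
\end{itemize}
Vertex-length at most $n-1$ should mean that the ear, with its endpoints, contains at most $n-1$ vectors, so they are independent and every coefficient vanishes. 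At the interior vertices this gives $\alpha_{t,t+1}=\alpha_{t,t-1}\equalscolon\mu_t$. At the endpoints it forces the edge coefficients to equal the already-known $\mu_a,\mu_b$. Walking along the ear then shows that each new vertex satisfies $\boldsymbol v_t=\mu_t\boldsymbol u_t+\boldsymbol w$ with the same $\boldsymbol w$. Induction over the ears covers all of $V(G)$, since an ear decomposition exhausts the vertices.

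\textbf{Expected obstacle: vertex counting.} I expect the main difficulty to be the precise count. I must confirm that the ear with both endpoints, and the initial cycle, never has more than $n-1$ vectors, which is what guarantees linear independence. If the convention for "vertex-length" counts only interior vertices, independence of up to $n+1$ vectors fails. In that case I would fall back on using the endpoint relation to eliminate one unknown coefficient, leaving a relation among fewer vectors that general position still forces to be trivial. Degenerate ears consisting of a single edge between old vertices add no vertices; they only impose consistency, and I would simply skip them. "Proper" presumably excludes exactly such degeneracies, or rules out ears whose endpoints coincide. A closed ear with coincident endpoints would be handled exactly like the initial cycle.
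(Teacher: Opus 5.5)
Your proposal is correct and is essentially the paper's proof. The paper does the same cyclic summation on the first ear, using independence of at most $n-1$ of the $\boldsymbol u_i$; for each new open ear it closes the ear with the relation $\boldsymbol v_a-\boldsymbol v_b=\mu_a\boldsymbol u_a-\mu_b\boldsymbol u_b$ from the inductive hypothesis (its \eqref{eq:22}) and reruns the cycle argument, which forces the endpoint coefficients to equal $\mu_a,\mu_b$ and so gives the same $\boldsymbol w$.

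Your main reading of the conventions is the right one, so the fallback is not needed. The paper counts endpoints (it calls a single-edge ear one of vertex-length $2$), and ``proper'' means every ear after the first is open. Your remark about closed ears is not accurate: for a closed ear through an old vertex $u_a$, the cycle argument only gives $\tilde{\boldsymbol w}-\boldsymbol w\in\Span\{\boldsymbol u_a\}$, not the same $\boldsymbol w$, which is exactly why properness is assumed.
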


\begin{proof}
Let $u_1,u_2,\ldots,u_m$ be the first ear in the sequence, which is a cycle in $G$. By \eqref{eq:09}, we have
$$\boldsymbol v_{i+1}-\boldsymbol v_i=\mu_{i+1}\boldsymbol u_{i+1}-\nu_i\boldsymbol u_i,\quad i=1,2,\ldots,m.$$
Cyclic summation of both sides yields
$$\mathbf 0=\sum_{i=1}^m(\mu_i-\nu_i)\boldsymbol u_i,$$
whence $\mu_i=\nu_i$ for every $i=1,2,\ldots,m$. Indeed, because the light sources are in a general position, any $m\leq n-1$ vectors $\boldsymbol u_i$ are linearly independent. Thus, we can write
$$\boldsymbol v_i-\boldsymbol v_1=\mu_i\boldsymbol u_i-\mu_1\boldsymbol u_1,$$
whence
$$\boldsymbol v_i=\mu_i\boldsymbol u_i-\mu_1\boldsymbol u_1+\boldsymbol v_1\equalscolon\mu_i\boldsymbol u_i+\boldsymbol w$$
for every $i=1,2,\ldots,m$.\\

We will prove by induction on the number of ears that \eqref{eq:24} holds for every vertex $v\in V(G)$, with the same translation $\boldsymbol w$. Denote by $H\subset G$ the graph constructed so far, and let $u_{m+1},u_{m+2},\ldots,u_{m+k}$ be the next ear in the sequence, with $u_{m+1},u_{m+k}\in V(H)$. From the inductive hypothesis, we immediately get
\begin{equation}\label{eq:22}\boldsymbol v_{m+1}-\boldsymbol v_{m+k}=\mu_{m+1}\boldsymbol u_{m+1}-\mu_{m+k}\boldsymbol u_{m+k}\equalscolon\tilde\mu_{m+1}\boldsymbol u_{m+1}-\tilde\nu_{m+k}\boldsymbol u_{m+k},\end{equation}
and the same argument as for the base case yields
$$\boldsymbol v_i=\tilde\mu_i\boldsymbol u_i+\tilde{\boldsymbol w}$$
for every $i=m+1,m+2,\ldots,m+k$. Further, \eqref{eq:22} implies that $\tilde\mu_{m+1}=\mu_{m+1}$, whence also $\tilde{\boldsymbol w}=\boldsymbol w$. By the principle of mathematical induction, this concludes the proof.
\end{proof}

The above result seems difficult to improve upon. The problem arises when the graph $G$ is too sparse, i.e., too many pairs of vectors $\boldsymbol u_i$ are orthogonal. Already in the case where $G$ is a cycle on $n+1$ vertices, the conclusion ceases to be true. Moreover, the corresponding cubic form \eqref{eq:15} does not need to possess any symmetry. However, we know that the graph $G$ must actually be quite dense:

\begin{lemma}\thlabel{lem:03}
Every induced subgraph of $G$ on $n$ vertices is connected.
\end{lemma}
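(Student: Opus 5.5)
We argue by contradiction, using only the geometry of the vectors $\boldsymbol u_i$ in the $(n-1)$-dimensional space $T_pM^{n-1}$ together with the general position assumption. Suppose some induced subgraph of $G$ on a vertex set $W$ with $|W|=n$ is disconnected. Then $W$ splits into two non-empty parts $A$ and $B$ with $A\sqcup B=W$ and no edges between them. By the definition of the orthogonality graph, this means $\boldsymbol u_a\perp\boldsymbol u_b$ for every $u_a\in A$ and $u_b\in B$. Hence $\Span A\perp\Span B$, and in particular $\Span A\cap\Span B=\{\mathbf 0\}$.

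Next we use general position. Since $|A|,|B|\leq n-1$, each of $A$ and $B$ has at most $n-1$ vectors. By the general linear position assumption, no $n-1$ of the vectors $\boldsymbol u_i$ lie in a hyperplane of $T_pM^{n-1}$ through $p$, so any at most $n-1$ of them are linearly independent. This is the same fact already used in the second claim of the proof of \thref{lem:06}. Therefore $\dim\Span A=|A|$ and $\dim\Span B=|B|$.

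Combining the two steps gives
$$n=|A|+|B|=\dim\Span A+\dim\Span B=\dim(\Span A\oplus\Span B)\leq\dim T_pM^{n-1}=n-1,$$
which is a contradiction. Hence every induced subgraph of $G$ on $n$ vertices is connected.

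The argument is essentially the dimension count from the second claim in the proof of \thref{lem:06}. The only point needing care is that the general position hypothesis really gives linear independence of up to $n-1$ of the vectors $\boldsymbol u_i$, and not just of exactly $n-1$. This holds because any subset of a linearly independent set is linearly independent.
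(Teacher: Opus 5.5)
Your proof is correct and follows essentially the same route as the paper's. The paper splits the disconnected induced subgraph into a connected component and the rest (your $A$ and $B$), uses general position to get linear independence of each part of size at most $n-1$, and derives $n\leq n-1$ from the orthogonal direct sum inside the $(n-1)$-dimensional space $T_pM^{n-1}$.
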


\begin{proof}
Suppose, to the contrary, that there exists an induced subgraph $H\subseteq G$ on $n$ vertices that is disconnected, and let $C\subsetneq H$ be any connected component. Since $|V(C)|\leq n-1$, the corresponding vectors $\boldsymbol u_i$ are linearly independent, and since $|V(H\setminus C)|=n-|V(C)|\leq n-1$, the corresponding vectors $\boldsymbol u_i$ are likewise linearly independent. Moreover, since there is no edge in $G$ between $C$ and $H\setminus C$, $\Span\{\boldsymbol u_i:u_i\in V(C)\}$ is orthogonal to $\Span\{\boldsymbol u_i:u_i\in V(H\setminus C)\}$, whence
$n=|V(H)|=|V(C)|+|V(H\setminus C)|=\dim\Span\{\boldsymbol u_i:u_i\in V(C)\}+\dim\Span\{\boldsymbol u_i:u_i\in V(H\setminus C)\}=\dim(\Span\{\boldsymbol u_i:u_i\in V(C)\}\oplus\Span\{\boldsymbol u_i:u_i\in V(H\setminus C)\})\leq n-1$, a contradiction. This concludes the proof.
\end{proof}

The above result is closely related to the notion of vertex-connectivity. If $G$ is a graph on $n+1$ vertices, then \thref{lem:03} means precisely that it is $2$-connected. Now, a classical theorem of H.~Whitney \cite[Theorem~19]{40eb8360-6875-366c-a9fc-9bc8aa35535c} asserts that a graph is $2$-connected if and only if it admits a proper ear decomposition. Unfortunately, it may still admit no proper ear decomposition with ears of vertex-length at most $n-1$, as illustrated by the counterexample of the cycle graph $C_{n+1}$.\\

\begin{figure}
\begin{subfigure}{\linewidth}
\begin{tikzpicture}
\begin{scope}[xshift=-5cm]
\useasboundingbox (-3,-2.75) rectangle (3,2.5);
\node (A) at (-0.875,-1.75) {};
\node (B) at (-0.875,0) {};
\node (C) at (-0.875,1.75) {};
\node (D) at (0.875,-1.75) {};
\node (E) at (0.875,0) {};
\node (F) at (0.875,1.75) {};
\draw (A) -- (D) -- (B) -- (E) -- (C) -- (F) -- (A) -- (E) (B) -- (F) (C) -- (D);
\node[tag] at (0,-2.5) {$K_{3,3}\cong\overline{C_3\cup C_3}$};
\end{scope}
\begin{scope}
\useasboundingbox (-3,-2.75) rectangle (3,2.5);
\begin{scope}[rotate=90]
\node (A) at (0,0) {};
\node (B) at (0:1.75) {};
\node (C) at (72:1.75) {};
\node (D) at (144:1.75) {};
\node (E) at (216:1.75) {};
\node (F) at (288:1.75) {};
\draw (A) -- (B) -- (C) -- (A) -- (D) -- (E) -- (A) -- (F) -- (B) (C) -- (D) (E) -- (F);
\end{scope}
\node[tag] at (0,-2.5) {$W_6\cong\overline{P_1\cup C_5}$};
\end{scope}
\begin{scope}[xshift=5cm]
\useasboundingbox (-3,-2.75) rectangle (3,2.5);
\begin{scope}[rotate=90]
\node (A) at (0:0.5) {};
\node (B) at (120:0.5) {};
\node (C) at (240:0.5) {};
\node (D) at (0:2) {};
\node (E) at (120:2) {};
\node (F) at (240:2) {};
\draw (A) -- (D) -- (E) -- (B) -- (C) -- (F) -- (D) (C) -- (A) -- (B) (E) -- (F);
\end{scope}
\node[tag] at (0,-2.5) {$Y_3\cong\overline{C_6}$};
\end{scope}
\end{tikzpicture}
\caption{Minimal $3$-connected graphs on $6$ vertices}
\label{fig:03}
\end{subfigure}
\begin{subfigure}{\linewidth}
\begin{tikzpicture}
\begin{scope}[xshift=-5cm]
\useasboundingbox (-3,-2.75) rectangle (3,4.25);
\node (A) at (-0.875,-1.75) {};
\node (B) at (-0.875,0) {};
\node (C) at (-0.875,1.75) {};
\node (D) at (0.875,-1.75) {};
\node (E) at (0.875,-0) {};
\node (F) at (0.875,1.75) {};
\node (G) at (0.875,3.5) {};
\draw (A) -- (D) -- (B) -- (E) -- (A) -- (F) -- (B) -- (G) -- (C) -- (F) (D) -- (C) -- (E) (A) -- (G);
\node[tag] at (0,-2.5) {$K_{3,4}$};
\end{scope}
\begin{scope}
\useasboundingbox (-3,-2.75) rectangle (3,2.5);
\begin{scope}[rotate=90]
\node (A) at (0,0) {};
\node (B) at (0:1.75) {};
\node (C) at (60:1.75) {};
\node (D) at (120:1.75) {};
\node (E) at (180:1.75) {};
\node (F) at (240:1.75) {};
\node (G) at (300:1.75) {};
\draw (A) -- (B) -- (C) -- (A) -- (D) -- (E) -- (A) -- (F) -- (G) -- (A) (C) -- (D) (E) -- (F) (G) -- (B);
\end{scope}
\node[tag] at (0,-2.5) {$W_7$};
\end{scope}
\begin{scope}[xshift=5cm]
\useasboundingbox (-3,-2.75) rectangle (3,2.5);
\begin{scope}[rotate=90]
\node (A) at (0:0.5) {};
\node (B) at (120:0.5) {};
\node (C) at (240:0.5) {};
\node (D) at (0:2) {};
\node (E) at (120:2) {};
\node (F) at (240:2) {};
\node (G) at ($(B)!.5!(C)$) {};
\draw (A) -- (D) -- (E) -- (B) -- (G) -- (C) -- (F) -- (D) (C) -- (A) -- (B) (E) -- (F);
\draw[dashed] (G) -- (A);
\draw[dashed] (G) to[bend left=20] (E);
\draw[dashed] (G) to[bend left=20] (D);
\end{scope}
\node[tag] at (0,-2.5) {$Y_3[e]+e$};
\end{scope}
\end{tikzpicture}
\caption{Minimal $3$-connected graphs on $7$ vertices}
\label{fig:04}
\end{subfigure}
\caption{List of specific graphs used in the proof of \thref{lem:04}}
\end{figure}

If $G$ is a graph on $n+2$ vertices, then \thref{lem:03} means precisely that it is $3$-connected. For $n=4$, every $3$-connected graph on $6$ vertices contains either the wheel graph $W_6$, or the complete bipartite graph $K_{3,3}$, or the triangular prism graph $Y_3$ (fig.~\ref{fig:03}). The latter two still admit no proper ear decomposition with ears of vertex-length at most $3$. However, for $n=5$, every $3$-connected graph on $7$ vertices contains either the wheel graph $W_7$, or the complete bipartite graph $K_{3,4}$, or the triangular prism graph $Y_3$ with a vertex of one $3$-cycle bridged to an edge of the other $3$-cycle (fig.~\ref{fig:04}). Since each of the five graphs clearly admits a proper ear decomposition with ears of vertex-length at most $4$, the following lemma holds for $n=5$:

\begin{lemma}\thlabel{lem:04}
Let $n\geq 5$ and suppose that $G$ is a $3$-connected graph on $n+2$ vertices. Then it admits a proper ear decomposition with ears of vertex-length at most $n-1$.
\end{lemma}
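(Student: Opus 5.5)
The plan is to build the ear decomposition greedily, using Menger's theorem (in its fan-lemma form) to keep every ear short. The small case $n=5$ is already settled by the case analysis of Figure~\ref{fig:04}, so we assume $n\geq 6$. Throughout, the vertex-length of an ear counts all of its vertices, endpoints included. Since $G$ is $3$-connected, it has minimum degree at least $3$.

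\textbf{Step 1 (the first ear).} We need a cycle with at most $n-1$ vertices. Take a shortest cycle $C$ of $G$, of length $g$.
\begin{itemize}
\item If $g\geq 5$, then a breadth-first search from any vertex, to depth $2$, reaches $1+3+6=10$ distinct vertices.
\item More generally, the Moore bound for minimum degree $3$ gives $|V(G)|\geq 1+3\sum_{i=0}^{\lceil g/2\rceil-2}2^i$.
\end{itemize}
Since this bound grows exponentially in $g$, comparing it with $|V(G)|=n+2$ gives $g\leq n-1$ whenever $n\geq 5$; in fact $g$ is logarithmic in $n$. The verification is a routine comparison of $n+2$ with the Moore bound for small $g$, and I do not expect trouble there. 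So $C$ is an admissible first ear.

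\textbf{Step 2 (the inductive step).} Suppose $H\subseteq G$ has been built by a proper ear decomposition with short ears, $H$ is $2$-connected, and $k\colonequals|V(H)|\geq 3$. If $H\neq G$, there are two cases.
\begin{itemize}
\item If $V(H)=V(G)$, each remaining edge is an ear of vertex-length $2$.
\item Otherwise, pick a vertex $x\notin V(H)$. Since $|V(H)|\geq 3$ and $G$ is $3$-connected, the fan lemma gives three paths from $x$ to three distinct vertices of $H$. These paths are internally disjoint, share only $x$, and meet $H$ only at their endpoints.
\end{itemize}
The interiors of the three paths, excluding $x$, are disjoint subsets of the $n+1-k$ vertices outside $H\cup\{x\}$. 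Hence the two shortest paths together have at most $\lfloor 2(n+1-k)/3\rfloor$ interior vertices. Concatenating them through $x$ gives a path with distinct endpoints in $H$ and interior disjoint from $H$, that is, a proper (open) ear. Its vertex-length is at most $\lfloor 2(n+1-k)/3\rfloor+3$.

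\textbf{Step 3 (the numerical check).} We must verify that this bound is at most $n-1$ for all $k\geq 3$. It is worst at $k=3$, where it reads $\lfloor 2(n-2)/3\rfloor+3\leq n-1$.
\begin{itemize}
\item This holds for $n=6$ (bound $5$) and $n=7$ (bound $6$).
\item It holds for all $n\geq 8$, since then $2(n-2)/3+3\leq n-1$.
\item It fails for $n=5$, which is exactly why that case had to be handled separately via the explicit list of minimal $3$-connected graphs.
\end{itemize}
Adding the ear keeps $H$ $2$-connected and strictly enlarges it. Repeating Step 2 exhausts all vertices and then all edges, which yields the required proper ear decomposition.

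The main obstacle I anticipate is the bookkeeping in Step 3 for the borderline values of $n$, together with the definitional point of whether vertex-length counts endpoints. If it counts only the path, the same argument gives an even better bound. If the three-path count is too tight in some case, I would instead take $C$ to be a longest cycle of length at most $n-1$, which raises $k$ at the first step and loosens the estimate. A further point to check is that the fan lemma applies: this requires $|V(H)|\geq 3$, which Step 1 guarantees.
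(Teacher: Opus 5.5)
Your proof is correct, and it takes a genuinely different route from the paper.

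\textbf{The paper's route.} The paper argues by induction through Tutte's wheel theorem:
\begin{itemize}
\item wheels split into ears of vertex-length at most $3$;
\item adding an edge appends a single-edge ear;
\item splitting a vertex lengthens at most one ear by one vertex, while the number of vertices also grows by one, so the bound $|V(G)|-3$ is preserved.
\end{itemize}
The case $n=5$ is the base of that induction. It is checked against the list of minimal $3$-connected graphs on $7$ vertices. So the paper's control of ear lengths is local and structural.

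\textbf{Your route.} Yours is a global counting argument. You take a short first cycle from the girth bound. Then you take the two shortest of three fan paths from a new vertex, whose interiors must share the $n+1-k$ remaining vertices.

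\textbf{What each buys.} Your argument needs only Menger's theorem and a girth estimate. It involves no bookkeeping of how ears transform under vertex splittings. It also gives a quantitatively stronger conclusion: ears of vertex-length about $2n/3$ rather than $n-1$. The paper's argument, for its part, stays inside the graph-theoretic framework (Whitney's ear decompositions, minimal $k$-connected graphs) used throughout that section.

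\textbf{Handling $n=5$ yourself.} Your own suggested fallback removes the dependence on the $7$-vertex classification altogether.
\begin{itemize}
\item For $n=5$, all degrees are at least $3$. They cannot all equal $3$, since the degree sum $7\cdot 3$ would be odd.
\item Hence $\sum_v\binom{d_v}{2}\geq 6\cdot 3+\binom{4}{2}=24>21=\binom{7}{2}$.
\item So some pair of vertices has two common neighbours, and $G$ contains a $4$-cycle.
\item Starting the decomposition from this $4$-cycle gives $k\geq 4$, and your bound becomes $\lfloor 2(6-k)/3\rfloor+3\leq 4=n-1$.
\end{itemize}
Thus the whole lemma, including $n=5$, follows from your argument alone.
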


\begin{proof}
We will prove \thref{lem:04} by induction on the number of vertices of $G$. The base case $n=5$ can be manually verified by an exhaustive case analysis.\\

We say that a $3$-connected graph $G$ on $n+2$ vertices has property $\phi_{-3}$ if it admits a proper ear decomposition with ears of vertex-length at most $n-1$. W.T.~Tutte \cite[\S 5]{TUTTE1961441} proved that a graph is $3$-connected if and only if it is a wheel graph or is obtained from a wheel graph by
\begin{enumerate*}[label=(\Roman*)]
\item\label{it:01} adjoining new edges whose ends are two distinct non-adjacent vertices; and
\item\label{it:02} splitting vertices incident with $4$ or more edges, and adjoining an edge incident with the two resulting new vertices (fig.~\ref{fig:05}).
\end{enumerate*}
Hence, for the inductive step, it suffices to show that every wheel graph has property $\phi_{-3}$, and that both operations \ref{it:01} and \ref{it:02} preserve property $\phi_{-3}$.\\

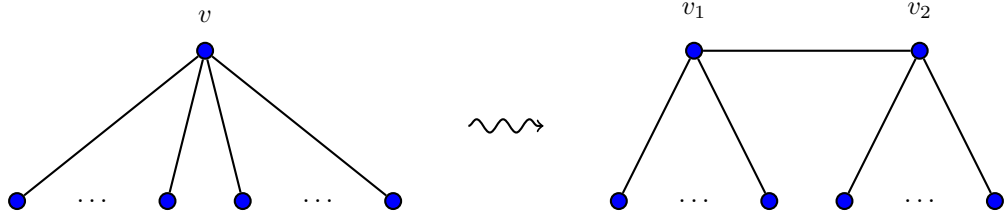
\begin{figure}
\begin{tikzpicture}
\begin{scope}[xshift=-4cm]
\node[label={[label distance=6pt]above:$v$}] (A) at (0,2) {};
\node (B) at (-2.5,0) {};
\node[tag] at (-1.5,0) {$\ldots$};
\node (C) at (-0.5,0) {};
\node (D) at (0.5,0) {};
\node[tag] at (1.5,0) {$\ldots$};
\node (E) at (2.5,0) {};
\draw (A) -- (B) (A) -- (C) (A) -- (D) (A) -- (E);
\end{scope}
\draw[->, decorate, decoration = snake] (-0.5,1) -- (0.5,1);
\begin{scope}[xshift=4cm]
\node[label={[label distance=6pt]above:$v_1$}] (A1) at (-1.5,2) {};
\node[label={[label distance=6pt]above:$v_2$}] (A2) at (1.5,2) {};
\node (B) at (-2.5,0) {};
\node[tag] at (-1.5,0) {$\ldots$};
\node (C) at (-0.5,0) {};
\node (D) at (0.5,0) {};
\node[tag] at (1.5,0) {$\ldots$};
\node (E) at (2.5,0) {};
\draw (A1) -- (A2) (A1) -- (B) (A1) -- (C) (A2) -- (D) (A2) -- (E);
\end{scope}
\end{tikzpicture}
\caption{Operation \ref{it:02} of splitting a vertex incident with $4$ or more edges}
\label{fig:05}
\end{figure}

Since every wheel graph admits a proper ear decomposition with ears of vertex-length at most $3$, the first claim follows immediately for $n\geq 4$. Further, operation \ref{it:01} involves attaching a single-edge ear of vertex-length $2$ to the graph, which also clearly preserves property $\phi_{-3}$ for $n\geq 3$.\\

Finally, suppose that $G$ has property $\phi_{-3}$ and denote by $G'$ the graph obtained from $G$ by splitting some vertex $v\in V(G)$ into $v_1,v_2\in V(G')$. Let $\mathcal P=(P_1,P_2,\ldots)$ be a proper ear decomposition of $G$ that serves as a witness to property $\phi_{-3}$. A proper ear decomposition $\mathcal P'$ of $G'$ that serves as a witness to property $\phi_{-3}$ may be constructed as follows. If an ear $P_i\in\mathcal P$ does not contain $v$, we simply append it to $\mathcal P'$. The vertex $v$ will appear for the first time as an internal vertex of some ear $P_i=(\ldots,v,\ldots)$. Now, if the two edges incident with $v$ in $P_i$ correspond to edges incident with two different vertices $v_1,v_2$ in $G'$, we replace $v$ by $v_1,v_2$ and append $P_i'=(\ldots,v_1,v_2,\ldots)$ to $\mathcal P'$. Note that although the path length increased by $1$, the number of vertices in the graph also increased by $1$, so this does not affect property $\phi_{-3}$. From now on, the vertex $v$ will appear only as an endpoint of some ears $P_j$, $j>i$, in which case we simply replace it by $v_1$ or $v_2$, depending on the incident edge, and append the modified ear to $\mathcal P'$. Finally, if the two edges incident with $v$ in $P_i$ correspond to edges incident with the same vertex $v_1$ in $G'$, we simply replace $v$ by $v_1$ and append the modified ear to $\mathcal P'$. We continue exactly as before, until some modified ear $P_j'$, $j>i$, ends with $v_2$. In this case, we extend $P_j'$ by adding $v_1$, thereby introducing $v_2$ as an internal vertex of $P_j'$, and append the modified ear to $\mathcal P'$. Again, the path length increased by $1$, but this does not affect property $\phi_{-3}$. This concludes the proof.
\end{proof}

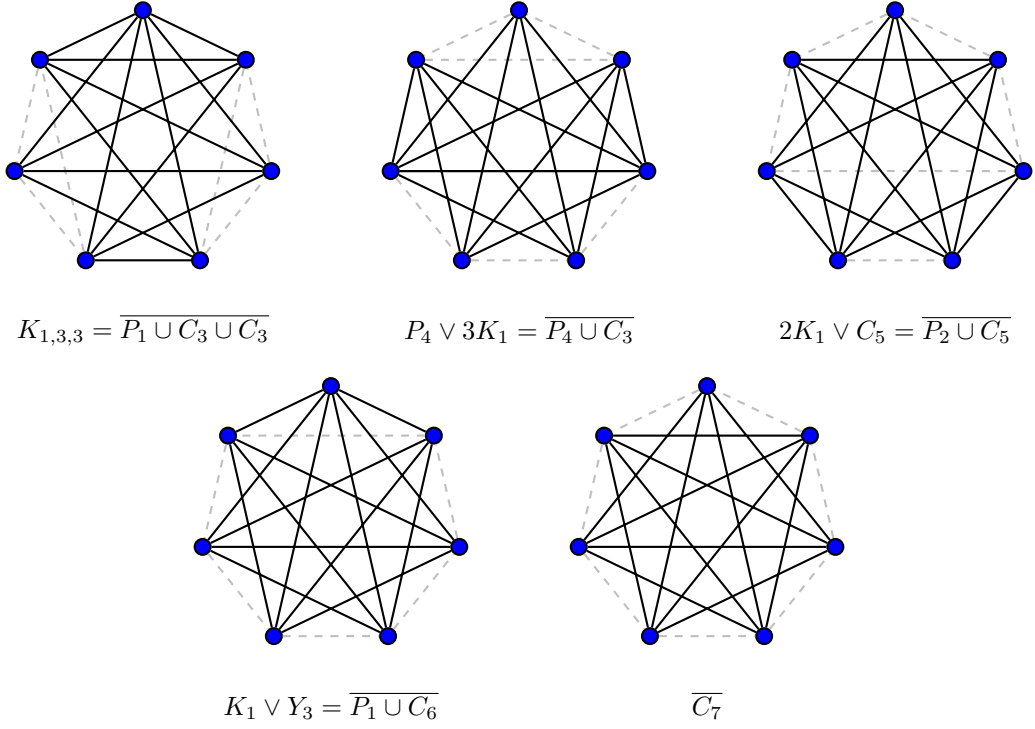
\begin{figure}
\begin{tikzpicture}
\begin{scope}[xshift=-5cm]
\useasboundingbox (-3,-2.75) rectangle (3,2.5);
\begin{scope}[rotate=90]
\node (A) at (360/7*0:1.75) {};
\node (B) at (360/7*1:1.75) {};
\node (C) at (360/7*2:1.75) {};
\node (D) at (360/7*3:1.75) {};
\node (E) at (360/7*4:1.75) {};
\node (F) at (360/7*5:1.75) {};
\node (G) at (360/7*6:1.75) {};
\draw[dashed, lightgray] (B) -- (C) -- (D) -- (B) (E) -- (F) -- (G) -- (E);
\draw (A) -- (B) -- (G) -- (C) -- (E) -- (B) -- (F) -- (D) -- (G) -- (A) -- (C) -- (F) -- (A) -- (D) -- (E) -- (A);
\end{scope}
\node[tag] at (0,-2.5) {$K_{1,3,3}=\overline{P_1\cup C_3\cup C_3}$};
\end{scope}
\begin{scope}
\useasboundingbox (-3,-2.75) rectangle (3,2.5);
\begin{scope}[rotate=90]
\node (A) at (360/7*0:1.75) {};
\node (B) at (360/7*1:1.75) {};
\node (C) at (360/7*2:1.75) {};
\node (D) at (360/7*3:1.75) {};
\node (E) at (360/7*4:1.75) {};
\node (F) at (360/7*5:1.75) {};
\node (G) at (360/7*6:1.75) {};
\draw[dashed, lightgray] (A) -- (B) -- (G) -- (A) (C) -- (D) -- (E) -- (F);
\draw (A) -- (C) -- (B) -- (E) -- (G) -- (D) -- (B) -- (F) -- (G) -- (C)-- (F) -- (A) -- (D) -- (F) -- (C) -- (E) -- (A);
\end{scope}
\node[tag] at (0,-2.5) {$P_4\vee 3K_1=\overline{P_4\cup C_3}$};
\end{scope}
\begin{scope}[xshift=5cm]
\useasboundingbox (-3,-2.75) rectangle (3,2.5);
\begin{scope}[rotate=90]
\node (A) at (360/7*0:1.75) {};
\node (B) at (360/7*1:1.75) {};
\node (C) at (360/7*2:1.75) {};
\node (D) at (360/7*3:1.75) {};
\node (E) at (360/7*4:1.75) {};
\node (F) at (360/7*5:1.75) {};
\node (G) at (360/7*6:1.75) {};
\draw[dashed, lightgray] (A) -- (B) -- (C) -- (F) -- (G) -- (A) (D) -- (E);
\draw (E) -- (A) -- (C) -- (G) -- (B) -- (F) -- (A) -- (D) -- (C) -- (E) -- (F) -- (D) -- (G) -- (E) -- (B) -- (D);
\end{scope}
\node[tag] at (0,-2.5) {$2K_1\vee C_5=\overline{P_2\cup C_5}$};
\end{scope}
\begin{scope}[shift={(-2.5cm,-5cm)}]
\useasboundingbox (-3,-2.75) rectangle (3,2.5);
\begin{scope}[rotate=90]
\node (A) at (360/7*0:1.75) {};
\node (B) at (360/7*1:1.75) {};
\node (C) at (360/7*2:1.75) {};
\node (D) at (360/7*3:1.75) {};
\node (E) at (360/7*4:1.75) {};
\node (F) at (360/7*5:1.75) {};
\node (G) at (360/7*6:1.75) {};
\draw[dashed, lightgray] (B) -- (C) -- (D) -- (E) -- (F) -- (G) -- (B);
\draw (A) -- (B) -- (D) -- (F) -- (A) -- (G) -- (E) -- (C) -- (G) -- (D) -- (A) -- (E) -- (B) -- (F) -- (C) -- (A);
\end{scope}
\node[tag] at (0,-2.5) {$K_1\vee Y_3=\overline{P_1\cup C_6}$};
\end{scope}
\begin{scope}[shift={(2.5cm,-5cm)}]
\useasboundingbox (-3,-2.75) rectangle (3,2.5);
\begin{scope}[rotate=90]
\node (A) at (360/7*0:1.75) {};
\node (B) at (360/7*1:1.75) {};
\node (C) at (360/7*2:1.75) {};
\node (D) at (360/7*3:1.75) {};
\node (E) at (360/7*4:1.75) {};
\node (F) at (360/7*5:1.75) {};
\node (G) at (360/7*6:1.75) {};
\draw[dashed, lightgray] (A) -- (B) -- (C) -- (D) -- (E) -- (F) -- (G) -- (A);
\draw (A) -- (C) -- (E) -- (G) -- (B) -- (D) -- (F) -- (A) -- (D) -- (G) -- (C) -- (F) -- (B) -- (E) -- (A);
\end{scope}
\node[tag] at (0,-2.5) {$\overline{C_7}$};
\end{scope}
\end{tikzpicture}
\caption{Minimal $4$-connected graphs on $7$ vertices}
\label{fig:06}
\end{figure}

As we have already observed, for $n=4$, it turns out that $6$ vertices are too few for \thref{lem:03} to force property $\phi_{-3}$. To obtain any bound, we must therefore consider graphs on at least $7$ vertices. In this case, \thref{lem:03} means precisely that such a graph is $4$-connected. Now, since each of the five minimal $4$-connected graphs on $7$ vertices (fig.~\ref{fig:06}) clearly admits a proper ear decomposition with ears of vertex-length at most $3$, the following lemma holds:

\begin{lemma}
Suppose that $G$ is a $4$-connected graph on $7$ vertices. Then it admits a proper ear decomposition with ears of vertex-length at most $3$.
\end{lemma}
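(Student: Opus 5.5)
The plan is to avoid the case-by-case inspection of the five graphs in fig.~\ref{fig:06} and give a direct argument. I read \emph{vertex-length at most $3$} as follows: the initial ear is a triangle, and every later ear is either a single edge, which has vertex-length $2$, or a path $x,v,y$ with one new internal vertex $v$ and endpoints $x\neq y$ already constructed, which has vertex-length $3$. It therefore suffices to order the vertices as $w_1,w_2,\ldots,w_7$ so that $w_1,w_2,w_3$ span a triangle and each later $w_k$ has at least two neighbours among $w_1,\ldots,w_{k-1}$.

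Given such an ordering, the ear decomposition is built as follows. Start with the triangle on $w_1,w_2,w_3$. For each $k\geq 4$, attach the ear $x,w_k,y$, where $x,y$ are two earlier neighbours of $w_k$. Finally, add all remaining edges of $G$ one at a time as single-edge ears. Every ear after the first is open with distinct endpoints, so the decomposition is proper, and every ear has vertex-length at most $3$.

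The whole argument rests on degree counting. A $4$-connected graph on $7$ vertices has minimum degree at least $4$, so every vertex has at most $2$ non-neighbours; equivalently, $\overline G$ has maximum degree at most $2$. I would proceed in three steps.

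First, find a triangle. Take any vertex $v$. It has at least $4$ neighbours, and each of them misses at most $2$ of the other $6$ vertices. Hence two neighbours of $v$ are adjacent, since otherwise some neighbour of $v$ would have $3$ non-neighbours among the other neighbours of $v$.

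Second, handle $k=4$, which is the step I expect to be the only delicate one. Let $T$ be the triangle and suppose each of the $4$ outside vertices had at most one neighbour in $T$. Then there would be at least $4\cdot 2=8$ non-edges between $T$ and the outside vertices. But each vertex of $T$ has at most $2$ non-neighbours, and none of them lies inside $T$ because $T$ is a triangle, so there are at most $3\cdot 2=6$ such non-edges. This is a contradiction, so some outside vertex has at least two neighbours in $T$, and we take it as $w_4$.

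Third, handle $k\geq 5$. Any remaining vertex has at most $2$ non-neighbours in total, so it has at least $(k-1)-2\geq 2$ neighbours among the $k-1\geq 4$ vertices already placed. The ordering can therefore always be extended greedily to all $7$ vertices.

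As a sanity check, I would confirm the construction on one of the five minimal graphs, for instance $\overline{C_7}$, to make sure the vertex-length convention matches the one used in the proof of the preceding ear-decomposition lemma. Adding edges to $G$ only adds single-edge ears, so the same argument also covers non-minimal graphs directly.
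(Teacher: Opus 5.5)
Your proof is correct, and it takes a genuinely different route from the paper's.

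The paper argues by inspection. It lists the five edge-minimal $4$-connected graphs on $7$ vertices in fig.~\ref{fig:06}, states that each of them clearly has the required decomposition, and tacitly relies on the fact that any further edges can be appended as single-edge ears.

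You avoid any classification. You use only $\delta(G)\geq 4$, i.e.\ $\Delta(\overline G)\leq 2$, and build the decomposition greedily: start from a triangle, attach each new vertex by a $3$-vertex ear, and finish with chords. Your reading of vertex-length counts both endpoints, so the first ear is a triangle and later ears have at most one internal vertex. This is exactly what the paper's ear-decomposition lemma needs for linear independence in $\mathbb R^{3}$. The one delicate step is the double count of at least $8$ versus at most $6$ non-edges between the triangle and the remaining four vertices, and it is right.

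What your approach buys:
\begin{enumerate*}[label=(\roman*)]
\item a self-contained verification that does not depend on the completeness of the list in fig.~\ref{fig:06} or on checking each graph by eye;
\item a stronger statement, covering every graph on $7$ vertices with minimum degree $4$, such as $\overline{C_4\cup C_3}$, which is not $4$-connected;
\item a transparent reason why $7$ vertices are needed: on $6$ vertices the same count is tight ($\geq 6$ versus $\leq 6$), and $Y_3=\overline{C_6}$ attains equality.
\end{enumerate*}
The paper's approach, in turn, keeps this case inside the same minimal-graph framework it uses for $n\geq 5$ and for the $K_{3,3}$, $W_6$, $Y_3$ discussion. There, explicit graphs are what reveal the limits of the ear-decomposition method.
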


Unfortunately, the graph method does not apply to the case $n=3$, because the key equalities \eqref{eq:09} and \eqref{eq:10} do not necessarily hold. Therefore, ultimately, for
$$L(n)=\begin{cases}31&n=3\\7&n=4\\n+2&n\geq 5\end{cases}$$
it follows from the above considerations that, in a suitable coordinate system, $D^3f=\mathbf 0$. The same argument as in the proof of \thref{thm:01} yields that $K$ is an ellipsoid.

\subsection{Lower bound}

On the other hand, if $L(n)=n$, $n\geq 3$, \thref{thm:01} does not need to hold. Interestingly, we do not have to look far for a counterexample, as the well-known unit ball of $\ell^p_n$, $p>1$, turns out to be one. Define
$$B_p^n\colonequals\{x\in\mathbb R^n:\|x\|_p\leq 1\}$$
and consider any point $\boldsymbol x\colonequals(x_1,x_2,\ldots,x_n)$ on the boundary. The normal vector at this point is given by $\boldsymbol n\colonequals(p|x_1|^{p-1}\sgn x_1,p|x_2|^{p-1}\sgn x_2,\ldots,p|x_n|^{p-1}\sgn x_n)$. Now, if the light source is placed at $\lambda\hat{\boldsymbol e}_i$, then the shadow boundary consists of points satisfying $\boldsymbol n\perp\boldsymbol x-\lambda\hat{\boldsymbol e}_i$, which is equivalent to
$$p=p\|\boldsymbol x\|_p^p=\boldsymbol n\cdot\boldsymbol x=\boldsymbol n\cdot\lambda\hat{\boldsymbol e}_i=\lambda p|x_i|^{p-1}\sgn x_i.$$
In particular, the shadow boundary is contained in the hyperplane $x_i=|\lambda|^{1/(1-p)}\sgn\lambda$. Moreover, every point $\boldsymbol x\in\partial B_p^n$ belongs to $n$ flat shadow boundaries created by light sources placed at $(|x_i|^{1-p}\sgn x_i)\hat{\boldsymbol e}_i$, $i=1,2,\ldots,n$. If $x_i=0$, then the corresponding light source is placed at infinity.\\

Observe that in the key \thref{lem:06} we need precisely $n+1$ light sources to conclude that, in a suitable coordinate system, $D^3f=\mathbf 0$. The additional light source served purely technical purposes, to ensure that we are able to find a subset that consists of $n+1$ light sources in a sufficiently generic position. Keeping all this in mind, and on the other hand, having a counterexample showing that $n$ light sources are not enough, the following conjecture seems promising to us:

\begin{conjecture}
Let $K\subset\mathbb R^n$, $n\geq 3$, be a convex body with boundary of class $C^3$. Suppose that for every point $p\in\partial K$ on the boundary, there are at least $n+1$ point light sources on the tangent hyperplane $T_p\partial K$ in a general linear position with respect to $p$, that create flat shadow boundaries on $K$. Then $K$ is an ellipsoid.
\end{conjecture}

\backmatter

%%%%%%%%%%%%%%%%%%%%%%%%%%%%%%%%%%%%%%%%%%%%%%%%%%%%%%%%%%%%%%%%%%%%%%
\section{Concluding remarks}
%%%%%%%%%%%%%%%%%%%%%%%%%%%%%%%%%%%%%%%%%%%%%%%%%%%%%%%%%%%%%%%%%%%%%%

In this section, we have collected a number of related remarks that may be of interest but would distract from the main exposition if they were placed throughout the paper.

\subsection{sec:01}

The initial smoothness assumption in both \thref{thm:01,thm:02} is crucial to our argument. But with a little effort, in \thref{thm:01} we can slightly relax it by assuming that the boundary of $K$ is of class $C^{2,1}$. Indeed, since the second fundamental form is defined everywhere, we can still find a point at which it is positive definite, and use the same argument to obtain that the cubic form \eqref{eq:15} vanishes almost everywhere on the (open) set of all the points where the second fundamental form is positive definite. By the revised \thref{thm:08}, this concludes the proof. The question remains open as to how far we can reduce the initial smoothness assumption.\\

The assumption of a general position of the light sources seems natural, and it cannot be completely dispensed with. Indeed, following the idea of A.~Marchaud \cite{Marchaud1959}, let us consider light sources lying on a hyperplane $H$ intersecting the interior of $K$. Then, for every point $p\in\partial K\setminus H$, the boundary of $K$ is locally contained in a quadric, but for $p\in\partial K\cap H$, all the light sources are contained in a lower-dimensional subspace of the tangent hyperplane, and it is these points that form the seam connecting the two quadrics.\\

An extreme case of a non-general position of light sources with respect to $p$ is when some three of them are collinear with $p$. However, at the cost of increased smoothness, this particular situation poses no difficulty, since the following lemma holds:

\begin{lemma}
Let $K\subset\mathbb R^n$, $n\geq 3$, be a convex body with boundary of class $C^5$. Suppose that the points $p,u_1,u_2,u_3$ are collinear. Then, in a suitable coordinate system, $D^3f=\mathbf 0$.
\end{lemma}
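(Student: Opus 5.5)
The plan is to exploit the fact that the three light sources give three instances of the constraints \eqref{eq:06}--\eqref{eq:08} in the \emph{same} coordinate system. Choose the canonical parametrization $f$ at $p$, and rotate so that the common line $pu_1u_2u_3$ is $\langle\hat{\boldsymbol e}_1\rangle$. Then $u_i=r_i\hat{\boldsymbol e}_1$ with pairwise distinct $r_i$; I allow signed $r_i$, so sources on either side of $p$ are covered, and $r_i^{-1}=0$ for a source at infinity. Write $s_i\colonequals r_i^{-1}$, which are pairwise distinct. Let $m_i$ be the parameters of the corresponding flat shadow boundaries. The $C^5$ assumption is exactly what makes \eqref{eq:08} available. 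All the vectors $\boldsymbol a,\boldsymbol b,\boldsymbol c,\boldsymbol d$ range over $\langle\hat{\boldsymbol e}_1,\hat{\boldsymbol e}_n\rangle^\perp$, which is nonzero since $n\geq 3$.

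\textbf{Step 1: use \eqref{eq:06} and \eqref{eq:07}.} The left-hand side of \eqref{eq:06} does not depend on $i$. Hence $-m_i+s_i\equalscolon c$ is a common constant, so $m_i=s_i-c$ and $D^2f_1[\boldsymbol a,\boldsymbol b]=c\langle\boldsymbol a,\boldsymbol b\rangle$. Substitute this into \eqref{eq:07}. Write $T\colonequals D^3f[\boldsymbol a,\boldsymbol b,\boldsymbol c]$ and let $S$ be the symmetrized term $\sum Df_{11}[\boldsymbol a]\langle\boldsymbol b,\boldsymbol c\rangle$. Then \eqref{eq:07} reads $D^3f_1=c(T+S)+s_i(T-S)$. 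This is affine in $s_i$ while the left-hand side is fixed, so two distinct values $s_i$ already force
$$D^3f[\boldsymbol a,\boldsymbol b,\boldsymbol c]=\sum Df_{11}[\boldsymbol a]\langle\boldsymbol b,\boldsymbol c\rangle \quad\text{on }\langle\hat{\boldsymbol e}_1,\hat{\boldsymbol e}_n\rangle^\perp,$$
together with $D^3f_1=2cT$.

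\textbf{Step 2: use \eqref{eq:08} to pin down $f_{111}$.} Substitute $m=s-c$ into \eqref{eq:08}. The coefficient of $D^4f$ becomes $c+2s$, and the middle terms become linear in $s$. For the last bracket I would check that
$$(-m+s)(-4m+s)s+f_{111}(m^2-ms+s^2)=c(4c-3s)s+f_{111}(s^2-cs+c^2).$$
So the whole identity is a polynomial of degree at most $2$ in $s$, with a left-hand side independent of $s$. Three distinct $s_i$ (Vandermonde) make all coefficients of $s$ and $s^2$ vanish. The coefficient of $s^2$ is $(3c-f_{111})\sum\langle\boldsymbol a,\boldsymbol b\rangle\langle\boldsymbol c,\boldsymbol d\rangle$. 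This tensor is nonzero on a nonzero space, so $f_{111}=3c$. This is the only place where the third source and the extra smoothness are used. The other coefficients only constrain fourth derivatives and can be discarded.

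\textbf{Step 3: assemble $D^3f$ and conclude.} Put $\boldsymbol\eta\colonequals c\hat{\boldsymbol e}_1+(\text{the part of }Df_{11}\text{ orthogonal to }\hat{\boldsymbol e}_1)$, up to the normalization constant fixed by comparing with \eqref{eq:11}. I would then check, block by block in the splitting $\langle\hat{\boldsymbol e}_1\rangle\oplus\langle\hat{\boldsymbol e}_1,\hat{\boldsymbol e}_n\rangle^\perp$, that $D^3f$ coincides with $D^3$ of the cubic $\langle\boldsymbol x,\boldsymbol x\rangle\langle\boldsymbol x,\boldsymbol\eta\rangle$. Recall that the latter has $D^3[\hat{\boldsymbol e}_1,\hat{\boldsymbol e}_1,\hat{\boldsymbol e}_1]=6\eta_1$, $D^3[\hat{\boldsymbol e}_1,\boldsymbol a,\boldsymbol b]=2\eta_1\langle\boldsymbol a,\boldsymbol b\rangle$, $D^3[\hat{\boldsymbol e}_1,\hat{\boldsymbol e}_1,\boldsymbol a]=2\langle\boldsymbol a,\boldsymbol\eta\rangle$, and $D^3[\boldsymbol a,\boldsymbol b,\boldsymbol c]=2\sum\langle\boldsymbol a,\boldsymbol\eta\rangle\langle\boldsymbol b,\boldsymbol c\rangle$.
\begin{itemize}
\item The $\hat{\boldsymbol e}_1\hat{\boldsymbol e}_1\hat{\boldsymbol e}_1$ entry matches because $f_{111}=3c$, i.e.\ $f_{111}=3\cdot D^2f_1/\langle\cdot,\cdot\rangle$.
\item The mixed $\hat{\boldsymbol e}_1\boldsymbol a\boldsymbol b$ entry matches by Step 1, since $D^2f_1=c\langle\cdot,\cdot\rangle$.
\item The $\hat{\boldsymbol e}_1\hat{\boldsymbol e}_1\boldsymbol a$ entry is $Df_{11}[\boldsymbol a]$ by definition.
\item The purely transversal block matches by the identity derived in Step 1.
\end{itemize}
The one point requiring care is consistency of the constants $6\eta_1$ versus $2\eta_1$ and $2\langle\boldsymbol a,\boldsymbol\eta\rangle$ versus $Df_{11}$. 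Both work out precisely because the coefficients in \eqref{eq:07} are $m$ and those of \eqref{eq:08} give $f_{111}=3c$. I expect this bookkeeping to be the main, though routine, obstacle. Once it is done, applying the linear map of the Remark with a suitable multiple of $\boldsymbol\eta$ subtracts this cubic by \eqref{eq:11}. This yields $D^3f=\mathbf 0$ in the new canonical coordinates.
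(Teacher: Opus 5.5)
Your proposal is correct and follows essentially the same route as the paper. There too \eqref{eq:06} gives a common constant $\lambda=-m_i+r_i^{-1}$ (your $c$). Then \eqref{eq:07} and \eqref{eq:08} are read as polynomials of degree $1$ and $2$ in the varying parameter, which yields $D^3f[\boldsymbol a,\boldsymbol b,\boldsymbol c]=\sum Df_{11}[\boldsymbol a]\langle\boldsymbol b,\boldsymbol c\rangle$ and $f_{111}=3\lambda$. The paper uses $m$ as that parameter and you use $s=r^{-1}=m+\lambda$, which is an equivalent affine reparametrization. Finally, $D^3f[\boldsymbol x,\boldsymbol x,\boldsymbol x]=\langle\boldsymbol x,\boldsymbol x\rangle(f_{111}x_1+3Df_{11}[\boldsymbol x_{-1}])$ is removed via \eqref{eq:11}.

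Your block-by-block check with $\boldsymbol\eta=c\hat{\boldsymbol e}_1+\nabla_{-1}f_{11}$ is just the polarized form of that expansion, and the constants do work out, since $D^3f=\tfrac12 D^3[\langle\boldsymbol x,\boldsymbol x\rangle\langle\boldsymbol x,\boldsymbol\eta\rangle]$. Your remark that two sources already suffice for the step based on \eqref{eq:07} is also correct.
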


\begin{proof}
After applying a suitable orthogonal change of coordinates, we may further assume that $u_i=r_i\hat{\boldsymbol e}_1$, $i=1,2,3$. From \eqref{eq:06} we immediately obtain that $-m_i +r_i^{-1}$ does not depend on $i=1,2,3$. Denote this common value by $\lambda\in\mathbb R$, and observe that the values of $m_i$ must therefore be pairwise different. Further, \eqref{eq:06} and \eqref{eq:07} read
\begin{align}
\notag D^2f_1[\boldsymbol a,\boldsymbol b]={}&\lambda \langle\boldsymbol a,\boldsymbol b\rangle,\\
\label{eq:07'}D^3f_1[\boldsymbol a,\boldsymbol b,\boldsymbol c]={}&(m +2 \lambda) D^3f[\boldsymbol a,\boldsymbol b,\boldsymbol c] -m \sum Df_{11}[\boldsymbol a] \langle\boldsymbol b,\boldsymbol c\rangle.
\end{align}
Now, since the right-hand side of \eqref{eq:07'} is an affine function of $m$ that attains the same value for three different values of $m$, it must be constant, whence
$$D^3f[\boldsymbol a,\boldsymbol b,\boldsymbol c]=\sum Df_{11}[\boldsymbol a] \langle\boldsymbol b,\boldsymbol c\rangle.$$
Note that this does not yet complete the proof, as the above equality holds only for $\boldsymbol a,\boldsymbol b,\boldsymbol c\in\Span\{\hat{\boldsymbol e}_1\}^\perp$. Finally, \eqref{eq:08} reads
\begin{align}
\label{eq:08'}D^4f_1[\boldsymbol a,\boldsymbol b,\boldsymbol c,\boldsymbol d]={}&(2 m +3 \lambda) D^4f[\boldsymbol a,\boldsymbol b,\boldsymbol c,\boldsymbol d] -m \sum D^2f_{11}[\boldsymbol a,\boldsymbol b] \langle\boldsymbol c,\boldsymbol d\rangle -m \sum D^3f[\boldsymbol a,\boldsymbol b,\boldsymbol c] Df_{11}[\boldsymbol d]\\
\nonumber&\quad -(\lambda (-3 m +\lambda) (m +\lambda) +f_{111} (m^2 +m \lambda +\lambda^2)) \sum \langle\boldsymbol a,\boldsymbol b\rangle \langle\boldsymbol c,\boldsymbol d\rangle.
\end{align}
This time, since the right-hand side of \eqref{eq:08'} is a second-order polynomial in $m$ that attains the same value for three different values of $m$, it likewise must be constant. Equating its leading coefficient to zero yields
$$-3\lambda+f_{111}=0.$$
So, in the end, we have
\begin{align*}
D^3f[\boldsymbol x,\boldsymbol x,\boldsymbol x]&=f_{111}x_1^3+3Df_{11}[\boldsymbol x_{-1}]x_1^2+3D^2f_1[\boldsymbol x_{-1},\boldsymbol x_{-1}]x_1+D^3f[\boldsymbol x_{-1},\boldsymbol x_{-1},\boldsymbol x_{-1}]\\
&=f_{111}x_1^3+3Df_{11}[\boldsymbol x_{-1}]x_1^2+f_{111}\langle\boldsymbol x_{-1},\boldsymbol x_{-1}\rangle x_1+3Df_{11}[\boldsymbol x_{-1}]\langle\boldsymbol x_{-1},\boldsymbol x_{-1}\rangle\\
&=(x_1^2+\langle\boldsymbol x_{-1},\boldsymbol x_{-1}\rangle)(f_{111}x_1+3Df_{11}[\boldsymbol x_{-1}])=\langle\boldsymbol x,\boldsymbol x\rangle(f_{111}x_1+3Df_{11}[\boldsymbol x_{-1}]).
\end{align*}
By taking $\boldsymbol\eta\colonequals 2(f_{111}\hat{\boldsymbol e}_1+3\nabla_{-1}f_{11})$ in \eqref{eq:11}, we conclude the proof.
\end{proof}

\begin{remark}
Note that in the counterexample mentioned earlier, where $K=B_p^n$ is the unit ball of $\ell^p_n$, through every point on every coordinate hyperplane, there exists a tangent line on which two light sources lie.
\end{remark}

Let us conclude this section with a proposal for a far-reaching generalization of \thref{thm:01}. In \cite{Gonzalez_Garcia2022}, I.~Gonz\'alez-Garc\'ia, J.~Jer\'onimo-Castro, E.~Morales-Amaya, and D.J.~Verdusco-Hern\'andez proved the following result:

\begin{theorem}[{cf. \cite[Theorem~6]{Gonzalez_Garcia2022}}]\thlabel{thm:09}
Let $K,L\subset\mathbb R^n$, $n\geq 3$, be convex bodies with $L\subset\Int K$. Suppose that for every point $p\in\partial K$ on the boundary, a point light source at $p$ creates a flat shadow boundary on $L$, $L$ casts a shadow with a flat boundary on $K$, and both shadow boundaries are contained in parallel hyperplanes. Then $K,L$ are concentric, homothetic ellipsoids.
\end{theorem}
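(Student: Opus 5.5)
The plan is to first prove that $L$ is an ellipsoid, using only the first hypothesis, and then to use the parallelism to pin down $K$. For every $p\in\partial K$, the point $p$ is a light source creating a flat shadow boundary on $L$, and $\partial K$ encloses $L$. Fix a point $q\in\partial L$ and a supporting hyperplane $T$ of $L$ at $q$. Since $q\in\Int K$, the set $T\cap\partial K$ is a topological $(n-2)$-sphere in $T$ surrounding $q$, and every point of it is a light source creating a flat shadow boundary on $L$. I would show that such a sphere contains arbitrarily many points in general linear position with respect to $q$. Each hyperplane of $T$ through $q$ meets $T\cap\partial K$ in a proper closed subset, so $L(n)$ points can be chosen one at a time, each avoiding the finitely many hyperplanes through $q$ spanned by earlier choices. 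Then \thref{thm:01} (or \thref{thm:02} when $n=3$) applies to $L$ and shows that $L$ is an ellipsoid.

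The obstacle in this step is regularity. \thref{thm:01} needs $\partial L$ of class $C^3$ ($C^5$ if $n=3$), while the statement assumes nothing. I would first try to show that flatness of all shadow boundaries from an enclosing surface forces $\partial L$ to be $C^1$ and strictly convex: a corner or a segment on $\partial L$ would make some shadow boundary non-planar or the tangency set degenerate. Failing a bootstrap to $C^3$, I would fall back on the polyhedral argument of \cite[Theorem~3.2]{flatgrazesconvexbodies}, or on the local Blaschke-type result \cite[Theorem~2.2]{flatgrazesconvexbodies}. The latter only needs flat shadow boundaries for an open set of light positions, and such an open set is furnished by a neighbourhood of a point of $\partial K$ together with the continuity of the planes $H_p$ containing the shadow boundaries. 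Honestly, this is where I expect most of the work to be.

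Once $L$ is an ellipsoid, apply an affine map making $L$ the unit ball $B$ centred at the origin. The shadow boundary of $B$ from $p$ lies in the polar hyperplane $H_p=\{x:\langle x,p\rangle=1\}$, which is orthogonal to $\boldsymbol p$. By hypothesis, the supporting cone $C_p$ of $B$ with apex $p$ meets $\partial K$ in a hyperplane $H'_p$ parallel to $H_p$. Central projection from $p$ between the parallel hyperplanes $H_p$ and $H'_p$ is a homothety. Hence $K\cap H'_p$ is an $(n-2)$-sphere centred on the line $0p$, lying on the circular cone $C_p$ with axis $0p$.

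The final step is to show that $\partial K$ is a sphere centred at $0$. For each $p$, the sphere $K\cap H'_p$ is rotationally symmetric about the axis $0p$, so the radial function $\rho$ of $K$ is constant on it. Equivalently, $\rho$ takes the same value at all points of $\partial K$ seen from $p$ along the tangent rays of $B$. I would take $p$ at the maximum of $\rho$ and $p'$ at the minimum. Comparing the heights and radii of the corresponding spheres via the cone-opening formula $\sin\alpha=1/|p|$, and then using connectedness of $\partial K$, should give that $\rho$ is constant. Thus $K$ is a ball concentric with $B$, and undoing the affine map gives that $K$ and $L$ are concentric, homothetic ellipsoids.
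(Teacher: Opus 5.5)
Your first step is the paper's own: it also gets that $L$ is an ellipsoid by applying \thref{thm:01} (resp.\ \thref{thm:02}) with light sources on $T\cap\partial K$. When choosing the points, ``proper closed subset'' does not guarantee that finitely many such sets miss a point. What you need, and what holds, is that each hyperplane of $T$ through $q$ meets the $(n-2)$-sphere $T\cap\partial K$ in a nowhere dense set.

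The regularity problem you flag is genuine, but the paper does not resolve it either. Its ``immediate corollary'' applies \thref{thm:01} without checking that $\partial L$ is $C^3$ ($C^5$ for $n=3$), so both arguments really cover only sufficiently smooth $L$. Your fallbacks would not close this gap. \cite[Theorem~3.2]{flatgrazesconvexbodies} needs a polyhedral enclosing surface. \cite[Theorem~2.2]{flatgrazesconvexbodies} concerns parallel light from an open set of directions; light points on a curved piece of $\partial K$ are not projectively equivalent to that, and the theorem yields only local quadric information.

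From there your route differs from the paper's. The paper says the shadow boundaries on $K$ are planar sections of elliptic cones, hence ellipsoids, so $K$ has enough ellipsoidal sections to be an ellipsoid. It never uses the parallelism and does not address concentricity or homothety. Your use of parallelism is correct and is what the full conclusion needs. With $L=B$, the homothety centred at $p$ carries $\sigma_p=\partial B\cap H_p$ onto a round sphere $\Sigma_p\subset\partial K\cap H'_p$ centred on the line $0p$, so $|\cdot|$ is constant on $\Sigma_p$.

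The last step, however, is a real gap. The constancy of $|\cdot|$ on each $\Sigma_p$ only propagates information along those spheres. Comparing the spheres at a maximum and a minimum of $\rho$ via $\sin\alpha=1/|p|$ does not by itself force $\rho$ to be constant.

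The missing idea is reciprocity, and it fills the gap as follows.
\begin{enumerate}
\item Let $z\in\Sigma_p$ be the far endpoint of the tangent ray through $x\in\sigma_p$. Then $\langle x,z\rangle=1$, so the ray from $z$ through $x$ is tangent to $B$. The open segment $(z,p)$ lies in $\Int K$, so that ray exits $K$ at $p$, i.e.\ $p\in\Sigma_z$.
\item Hence every round sphere $\Sigma_z$, $z\in\Sigma_p$, passes through $p$. Its tangent space there is $T_p\Sigma_z=T_x\sigma_z=x^\perp\cap z^\perp=\Span\{x,p\}^\perp$, since $x,z,p$ lie on a line missing $0$.
\item As $x$ runs over $\sigma_p$, these $(n-2)$-dimensional subspaces of $p^\perp$ span $p^\perp$.
\item Every outer normal of $K$ at $p$ is orthogonal to the velocity of any smooth curve in $\partial K$ through $p$. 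So $p+p^\perp$ is the unique supporting hyperplane at $p$.
\item Thus $\partial K$ is $C^1$ with $T_p\partial K=p^\perp$ everywhere, so $|\cdot|^2$ is constant on $\partial K$ and $K=RB$. Undoing the affine map gives concentric homothetic ellipsoids.
\end{enumerate}
With this lemma, your second half becomes a self-contained, elementary alternative to the paper's unspecified appeal to ellipsoidal sections. It also actually delivers the ``concentric, homothetic'' part, which the paper's sketch does not.
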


\noindent Now, observe that \thref{thm:09} is an immediate corollary from \thref{thm:01}. Indeed, the mere assumption that the shadow boundaries created on $L$ are flat is enough to conclude that $L$ is an ellipsoid. Hence, the boundaries of shadows cast by $L$ on $K$ must also be ellipsoids, and thus $K$ admits enough ellipsoidal sections that it must itself be an ellipsoid.\\

In the above argument, we used neither the assumption that the light sources are located on the boundary of $K$ nor the assumption that the shadow boundaries are contained in parallel hyperplanes. This makes us wonder if the following conjecture holds:

\begin{conjecture}\thlabel{con:06}
Let $K,L\subset\mathbb R^n$, $n\geq 3$, be convex bodies with $L\subseteq K$. Suppose that for every point $p\in\partial K$ on the boundary, there are at least $L(n)$ point light sources on the supporting cone of $L$ with apex at $p$, such that $L$ casts a shadow with flat boundary on $K$. Then $K,L$ are concentric, homothetic ellipsoids.
\end{conjecture}

\noindent Note that if $K=L$, then \thref{con:06} is equivalent to \thref{thm:01}. Indeed, in this case, supporting cones degenerate to supporting hyperplanes. \thref{con:06} is also strictly stronger than \thref{thm:09}, because we assume nothing about the location of the light sources nor about shadow boundaries created on $L$. Therefore, if true, \thref{con:06} would contain both of these results. The question of the minimum number of light sources $L(n)$ is no less interesting than before. A rather special case of \thref{con:06} would also be the fact that if every point on the boundary of $K$ is an apex of some cone inscribed in $K$ and circumscribed on $L$, then $K,L$ are concentric, homothetic ellipsoids, and moreover $L=-\frac{1}{n}K$.

\begin{figure}
\includegraphics[width=.5\textwidth]{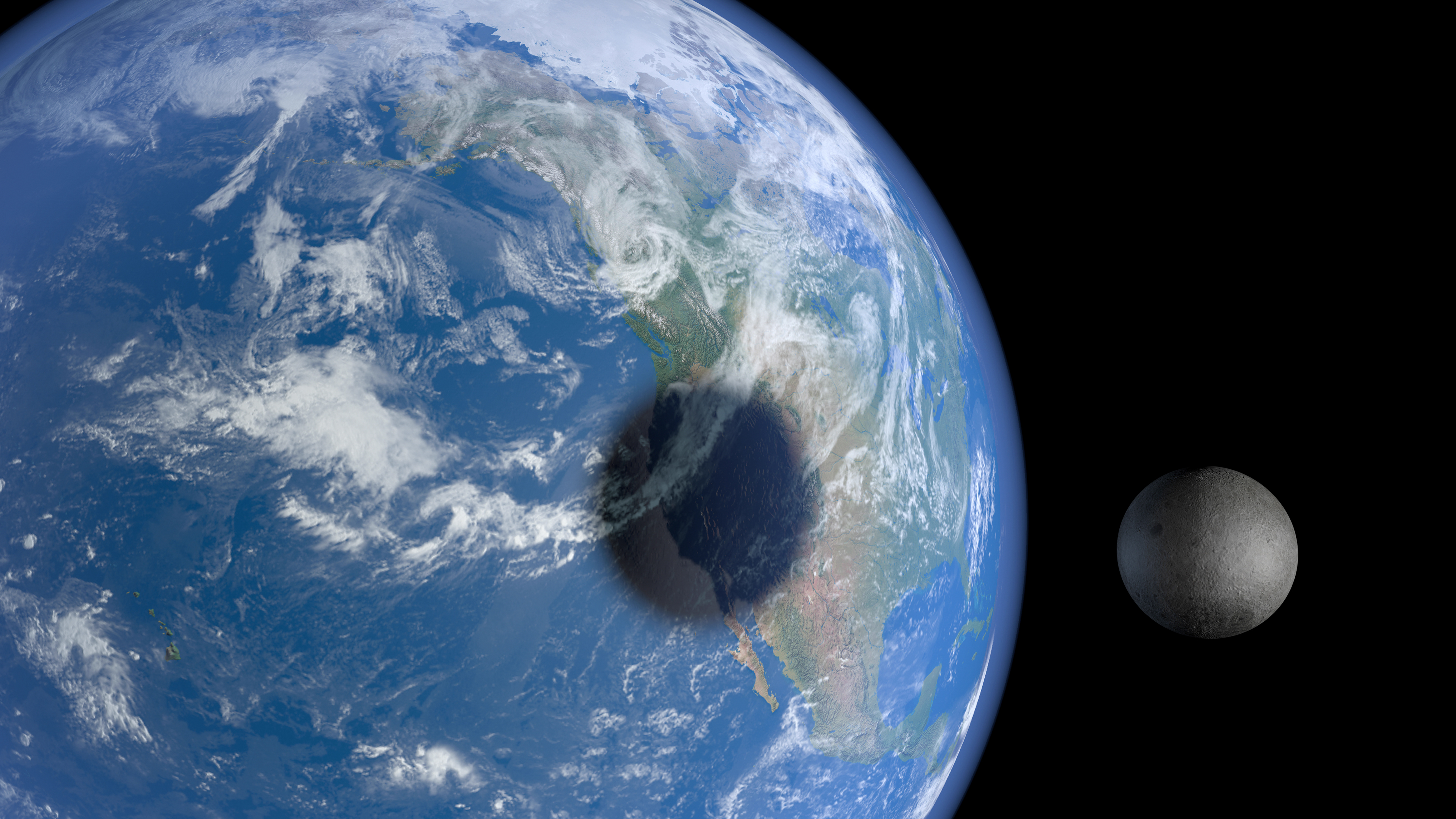}
\caption{The Earth illuminated by a point light source (the Sun), with the Moon projecting a shadow on its surface, representing the region of total solar eclipse. (Steven Molina/\protect\url{shutterstock.com})}
\label{fig:07}
\end{figure}

\subsection{sec:02}

The root of all trouble turns out to be the possibility of too many pairs of orthogonal vectors $\boldsymbol u_i$. In this case, instead of an additional constraint on vectors \eqref{eq:09}, we get a much less informative constraint on scalars \eqref{eq:10}. On the other hand, if the set of pairwise orthogonal vectors is sufficiently large, then the conclusion becomes true again:

\begin{lemma}\thlabel{lem:05}
Suppose that $G$ contains an independent set of size $n-1$. Then \eqref{eq:24} holds for some fixed $\boldsymbol w\in\mathbb R^{n-1}$ and every $u_i\in V(G)$.
\end{lemma}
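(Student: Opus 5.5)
The plan is to build the common translation $\boldsymbol w$ from the independent set and then propagate it to the remaining vertices using \eqref{eq:09}. Let $u_1,u_2,\ldots,u_{n-1}$ form the independent set, so the vectors $\boldsymbol u_1,\ldots,\boldsymbol u_{n-1}$ are pairwise orthogonal. They are nonzero, so they form an orthogonal basis of $T_pM^{n-1}\cong\mathbb R^{n-1}$.

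First I define
$$\boldsymbol w\colonequals\sum_{j=1}^{n-1}\frac{\lambda_j}{\|\boldsymbol u_j\|^2}\boldsymbol u_j,$$
so that $\langle\boldsymbol u_j,\boldsymbol w\rangle=\lambda_j$ for every $j=1,\ldots,n-1$. For $i\neq j$ in the independent set, \eqref{eq:10} gives $\langle\boldsymbol u_j,\boldsymbol v_i\rangle=\lambda_j=\langle\boldsymbol u_j,\boldsymbol w\rangle$. Hence $\boldsymbol v_i-\boldsymbol w$ is orthogonal to every $\boldsymbol u_j$ with $j\neq i$. Since the $\boldsymbol u_j$ form an orthogonal basis, this forces $\boldsymbol v_i-\boldsymbol w\in\Span\{\boldsymbol u_i\}$, which is exactly \eqref{eq:24} for $i=1,\ldots,n-1$.

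Next, let $u_k$ be any other vertex. Expand $\boldsymbol u_k$ in the orthogonal basis. If its coefficient at some $\boldsymbol u_i$ vanished, then $\boldsymbol u_k$ together with the remaining $n-2$ basis vectors would be $n-1$ vectors lying in a hyperplane of $T_pM^{n-1}$ through $p$, contradicting general linear position. Therefore $\langle\boldsymbol u_k,\boldsymbol u_i\rangle\neq0$ for all $i\leq n-1$, so $u_k$ is adjacent in $G$ to every vertex of the independent set. Applying \eqref{eq:09} to the pair $(k,i)$ and substituting $\boldsymbol v_i=\mu_i\boldsymbol u_i+\boldsymbol w$ gives
$$\boldsymbol v_k=\boldsymbol w+\alpha_i\boldsymbol u_k+\gamma_i\boldsymbol u_i$$
for some scalars $\alpha_i,\gamma_i$.

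Now take two different indices $i,i'\leq n-1$, which is possible since $n-1\geq3$. Subtracting the two expressions for $\boldsymbol v_k$ gives
$$(\alpha_i-\alpha_{i'})\boldsymbol u_k+\gamma_i\boldsymbol u_i-\gamma_{i'}\boldsymbol u_{i'}=\mathbf 0.$$
The three vectors $\boldsymbol u_k,\boldsymbol u_i,\boldsymbol u_{i'}$ are linearly independent, because by general position any at most $n-1$ of the $\boldsymbol u$'s are. Hence $\gamma_i=\gamma_{i'}=0$ and $\alpha_i=\alpha_{i'}$. Setting $\mu_k\colonequals\alpha_i$ yields $\boldsymbol v_k=\mu_k\boldsymbol u_k+\boldsymbol w$, with the same $\boldsymbol w$ as before.

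The only delicate step is verifying that every vertex outside the independent set is non-orthogonal to all of its members. This is what lets \eqref{eq:09} be applied for at least two different base vertices. Once that adjacency is established, the remainder is the same independence argument used in \thref{lem:02}.
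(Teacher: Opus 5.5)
Your proof is correct and follows essentially the same route as the paper. You build $\boldsymbol w$ from the independent set via \eqref{eq:10}, then apply \eqref{eq:09} with two members of that set and use linear independence of three of the $\boldsymbol u$'s to force $\boldsymbol v_k-\boldsymbol w\in\Span\{\boldsymbol u_k\}$. The only cosmetic difference is that you prove directly from general linear position that each outside vertex is adjacent to every member of the independent set, whereas the paper cites \thref{lem:03}, whose proof is the same dimension count.
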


\begin{proof}
Denote the subgraph induced by the vertices of the independent set by $H$, and let $\boldsymbol w\in\mathbb R^{n-1}$ be the unique vector satisfying $\langle\boldsymbol u_i,\boldsymbol w\rangle=\lambda_i$ for every $u_i\in V(H)$. Denote $\tilde{\boldsymbol v}_i\colonequals\boldsymbol v_i-\boldsymbol w$. Equation \eqref{eq:10} yields $\langle\boldsymbol u_i,\tilde{\boldsymbol v}_j\rangle=0$ for every pair of distinct vertices $u_i,u_j\in V(H)$. Hence $\tilde{\boldsymbol v}_i$ is orthogonal to the span of vectors corresponding to the remaining $n-2$ vertices of $H$, which in turn is the orthogonal complement of $\Span\{\boldsymbol u_i\}$. It follows that $\tilde{\boldsymbol v}_i=\mu_i\boldsymbol u_i$.\\

Now, let $u_i\notin V(H)$. By \thref{lem:03}, $u_i$ is connected to all the vertices of $H$. Let $u_1,u_2\in V(H)$. Equation \eqref{eq:09} yields
$$\tilde{\boldsymbol v}_i-\tilde{\boldsymbol v}_j=-\frac{\langle\boldsymbol u_j,\tilde{\boldsymbol v}_i\rangle}{\langle\boldsymbol u_j,\boldsymbol u_i\rangle}\boldsymbol u_i-\frac{\lambda_i-\langle\boldsymbol u_i,\boldsymbol v_j\rangle}{\langle\boldsymbol u_i,\boldsymbol u_j\rangle}\boldsymbol u_j,\quad j=1,2.$$
Thus $\tilde{\boldsymbol v}_i\in\Span\{\boldsymbol u_i,\boldsymbol u_1\}\cap\Span\{\boldsymbol u_i,\boldsymbol u_2\}=\Span\{\boldsymbol u_i\}$, which concludes the proof.
\end{proof}

Having established \thref{lem:05}, we could try to return to the question whether $6$ is an upper bound on $L(4)$. In the case where $G=K_{3,3}$, the graph does not admit a proper ear decomposition with ears of vertex-length at most $3$, but it does contain an independent set of size $3$, in which case, by \thref{lem:05}, we can proceed as before. Further, if $G\supsetneq K_{3,3}$ or $G\supseteq W_6$, then the graph already admits the desired ear decomposition. Unfortunately, if $G=Y_3$, then the graph admits neither the desired ear decomposition nor an independent set of size $3$, which prevents us from concluding the argument.\\

However, it seems to us that this obstacle may occur only on a nowhere dense set, unless the hypersurface is locally contained in a quadric. More precisely, we believe that the following conjecture holds:

\begin{conjecture}\thlabel{con:07}
Let $M\subset\mathbb R^n$, $n\geq 3$, be an open patch of a hypersurface of class $C^2$, and let $\Gamma_1,\Gamma_2$ be space curves. Suppose that for every point $p\in M$, there exist unique intersection points $u_1=\Gamma_1\cap T_pM,u_2=\Gamma_2\cap T_pM$ of the tangent hyperplane $T_pM$ with $\Gamma_1,\Gamma_2$, respectively, and that the vectors $u_1-p,u_2-p\in T_pM$ are orthogonal with respect to the second fundamental form $\mathrm{II}_p$ of $M$ at $p$. Then $M$ is contained in a quadric.
\end{conjecture}

\noindent Observe that the assumption of \thref{con:07} is invariant under projective automorphisms. Hence, if $M$ is contained in a quadric, then without loss of generality, we may assume that it is contained in the unit sphere, in which case we can easily verify that the straight lines
$$\Gamma_i=\langle\mathbf 1-2\hat{\boldsymbol e}_i\rangle+\hat{\boldsymbol e}_i-\hat{\boldsymbol e}_n,\quad i=1,2,\ldots,n-1,$$
enjoy the property that the vectors $u_i-p$, $i=1,2,\ldots,n-1$ are pairwise orthogonal for every point $p\in M$. However, it is the other implication that is the trickier one. Because the assumption becomes stronger with increasing dimension, we believe that two curves should be sufficient in an arbitrary dimension.

\subsection{sec:03}

An interesting question is whether \thref{lem:03} actually characterizes the family of all possible orthogonality graphs. In other words, is it true that every graph $G$ with the property that every induced subgraph of $G$ on $n$ vertices is connected, is an orthogonality graph of some finite subset of $\mathbb R^{n-1}$ of vectors in general position? Numerical evidence seems to support this hypothesis.\\

Although the example of a convex body with the property that $n$ transversal flat shadow boundaries pass through every point on its boundary seems quite rigid, it is relatively easy to construct a generic family of open patches of convex hypersurfaces with the property that $n-1$ transversal flat shadow boundaries pass through every point. Indeed, let $U\subseteq\mathbb R^n$ be any open subset of $\mathbb R^n$, let $H_1,H_2,\ldots,H_{n-1}:U\to\mathbb R$ be any submersions that induce foliations on $U$ where the leaves $H_1^{-1}(t),H_2^{-1}(t),\ldots,H_{n-1}^{-1}(t)$ are contained in transversal hyperplanes for every $t\in\mathbb R$, and let $p_1,p_2,\ldots,p_{n-1}:\mathbb R\to\mathbb R^n$ be any immersed curves. Define the vector field
$$\boldsymbol N(x)\colonequals(x-p_1(H_1(x)))\times(x-p_2(H_2(x)))\times\cdots\times(x-p_{n-1}(H_{n-1}(x))).$$
Now, if there exists a submersion $F:U\to\mathbb R$ such that $\nabla F(x)\parallel\boldsymbol N(x)$, then $F$ induces a foliation on $U$ where the leaves $F^{-1}(t)$ are hypersurfaces $L_t$ with the property that $n-1$ transversal flat shadow boundaries contained in $$H_1^{-1}(H_1(x)),H_2^{-1}(H_2(x)),\ldots,H_{n-1}^{-1}(H_{n-1}(x)),$$ created, respectively, by point light sources $$p_1(H_1(x)),p_2(H_2(x)),\ldots,p_{n-1}(H_{n-1}(x)),$$ pass through every point $x\in L_t$. By Frobenius' theorem, this is the case if and only if $$\omega\wedge\mathrm d\omega=0$$ on $U$, where $\omega\colonequals\boldsymbol N^\flat$ is the $1$-form dual to $\boldsymbol N$. Because the number of constraints (depending on derivatives of at most the first order) is significantly smaller than the number of degrees of freedom resulting from the choice of immersions, the family of solutions should be high-dimensional.

%%%%%%%%%%%%%%%%%%%%%%%%%%%%%%%%%%%%%%%%%%%%%%%%%%%%%%%%%%%%%%%%%%%%%%
\section*{Acknowledgments}
%%%%%%%%%%%%%%%%%%%%%%%%%%%%%%%%%%%%%%%%%%%%%%%%%%%%%%%%%%%%%%%%%%%%%%

I would like to thank Prof. Serhii Myroshnychenko for giving me this problem to consider, Prof. Efr\'en Morales-Amaya for bringing the supporting cones to my attention and many inspiring discussions during the Analysis and Convex Geometry Week at UniAndes, and my brother Dr. Michał Zawalski for an unwavering passion for combinatorics, countless hours of discussions (not only) about mathematics, and help in completing all the details.

\bibliography{references}{}
\bibliographystyle{amsplain}

\end{document}